\pdfoutput=1
\RequirePackage{ifpdf}
\ifpdf 
\documentclass[pdftex]{sigma}
\else
\documentclass{sigma}
\fi

\def\Z{\mathbb{Z}}
\def\R{\mathbb{R}}
\def\C{\mathbb{C}}
\def\N{\mathbb{N}}
\def\W{\mathbb{W}}
\def\I{\mathbb{I}}
\def\P{\mathbb{P}}
\def\E{\mathbb{E}}
\def\L{\mathbb{L}}
\def\K{\mathbb{K}}
\def\u{{\bf u}}
\def\x{{\bf x}}
\def\y{{\bf y}}
\def\z{{\bf z}}
\def\v{{\bf v}}
\def\B{{\bf B}}
\def\X{{\bf X}}
\def\t{{\bf t}}
\def\mbg{{\bf g}}
\def\bE{{\bf E}}
\def\bK{{\bf K}}
\def\g{{\bf g}}
\def\r{{\bf r}}
\def\cB{{\cal B}}
\def\cF{{\cal F}}
\def\cN{{\cal N}}
\def\cS{{\cal S}}
\def\rP{{\rm P}}
\def\rE{{\rm E}}
\def\rC{{\rm C}}
\def\rc{{\rm c}}
\def\sff{{\sf f}}
\def\RN{{R}}
\def\AN{{A}_{N-1}}
\def\BN{{B}_N}
\def\BNv{{B}^{\vee}_N}
\def\CN{{C}_N}
\def\CNv{{C}^{\vee}_N}
\def\BCN{{BC}_N}
\def\DN{{D}_N}

\numberwithin{equation}{section}

\newtheorem{Theorem}{Theorem}[section]
\newtheorem{Corollary}[Theorem]{Corollary}
\newtheorem{Lemma}[Theorem]{Lemma}
\newtheorem{Proposition}[Theorem]{Proposition}

\begin{document}

\allowdisplaybreaks

\newcommand{\arXivNumber}{1703.03914}

\renewcommand{\thefootnote}{}

\renewcommand{\PaperNumber}{079}

\FirstPageHeading

\ShortArticleName{Elliptic Determinantal Processes and Elliptic Dyson Models}

\ArticleName{Elliptic Determinantal Processes \\ and Elliptic Dyson Models\footnote{This paper is a~contribution to the Special Issue on Elliptic Hypergeometric Functions and Their Applications. The full collection is available at \href{https://www.emis.de/journals/SIGMA/EHF2017.html}{https://www.emis.de/journals/SIGMA/EHF2017.html}}}

\Author{Makoto KATORI}

\AuthorNameForHeading{M.~Katori}

\Address{Department of Physics, Faculty of Science and Engineering, Chuo University,\\
Kasuga, Bunkyo-ku, Tokyo 112-8551, Japan}
\Email{\href{mailto:katori@phys.chuo-u.ac.jp}{katori@phys.chuo-u.ac.jp}}

\ArticleDates{Received April 19, 2017, in f\/inal form September 29, 2017; Published online October 04, 2017}

\Abstract{We introduce seven families of stochastic systems of interacting particles in one-dimension corresponding to the seven families of irreducible reduced af\/f\/ine root systems. We prove that they are determinantal in the sense that all spatio-temporal correlation functions are given by determinants controlled by a single function called the spatio-temporal correlation kernel. For the four families ${A}_{N-1}$, ${B}_N$, ${C}_N$ and ${D}_N$, we identify the systems of stochastic dif\/ferential equations solved by these determinantal processes, which will be regarded as the elliptic extensions of the Dyson model. Here we use the notion of martingales in probability theory and the elliptic determinant evaluations of the Macdonald denominators of irreducible reduced af\/f\/ine root systems given by Rosengren and Schlosser.}

\Keywords{elliptic determinantal processes; elliptic Dyson models; determinantal martingales; elliptic determinant evaluations; irreducible reduced af\/f\/ine root systems}

\Classification{60J65; 60G44; 82C22; 60B20; 33E05; 17B22}

\renewcommand{\thefootnote}{\arabic{footnote}}
\setcounter{footnote}{0}

\section{Introduction and main results} \label{sec:Introduction}

Stochastic analysis on interacting particle systems is important to provide useful models describing equilibrium and non-equilibrium phenomena studied in statistical physics \cite{Kat16_Springer}. {\it Determinantal process} is a stochastic system of interacting particles which is integrable in the sense that all spatio-temporal correlation functions are given by determinants controlled by a single function called the {\it spatio-temporal correlation kernel} \cite{BR05,KT07b}. Since the generating functions of correlation functions are generally given by the Laplace transforms of probability densities, the stochastic integrability of determinantal processes is proved by showing that the Laplace transform of any multi-time joint probability density is expressed by the {\it spatio-temporal Fredholm determinant} associated with the correlation kernel. The purpose of this paper is to present new kinds of determinantal processes in which the interactions between particles are described by the logarithmic derivatives of Jacobi's theta functions. A classical example of determinantal processes is Dyson's Brownian motion model with parameter $\beta=2$, which is a dynamical version of the eigenvalue statistics of random matrices in the Gaussian unitary ensemble (GUE), and we call it simply the {\it Dyson model} \cite{Dys62,Kat16_Springer,Spo87}. We will extend the Dyson model to the elliptic-function-level in this paper. We use the notion of {\it martingales} in probability theory \cite{Kat14,Kat16_Springer} and the {\it elliptic determinantal evaluations of the Macdonald denominators} of seven families of irreducible reduced af\/f\/ine root systems given by Rosengren and Schlosser \cite{RS06} (see also \cite{Kra05,War02}).

Among the seven families of irreducible reduced af\/f\/ine root systems, $\RN=\AN, \BN, \BNv, \CN$, $\CNv, \BCN$, and $\DN$, we reported the results only for the system $\RN=\AN$ in the previous papers \cite{Kat15,Kat16} as follows. Assume $0< t_{\ast} < \infty$, $0< r < \infty$, $0< \cN < \infty$, and let
\begin{gather}
A_{\cN}^{2 \pi r}(t_{\ast}-t,x) = \left[ \frac{1}{2 \pi r} \frac{\partial}{\partial v}
\log \vartheta_1(v; \tau) \right]_{v=x/2 \pi r,\, \tau=i \cN (t_{\ast}-t)/2 \pi r^2}, \qquad t \in [0, t_{\ast}), \label{eqn:A1}
\end{gather}
where $\theta_1(v; \tau)$ denotes one of the Jacobi theta functions. See Appendix~\ref{sec:appendixA} for the Jacobi theta functions and related functions. For $N \in \{2,3, \dots\}$, we def\/ine the Weyl chamber
\begin{gather*}
\W_N = \big\{ \x=(x_1, x_2, \dots, x_N) \in \R^N\colon x_1 < x_2 < \cdots < x_N \big\}.
\end{gather*}
We introduced a one-parameter ($\beta > 0$) family of systems of stochastic dif\/ferential equations (SDEs) for $t \in [0, t_{\ast})$~\cite{Kat16}
\begin{alignat}{3}
& (\AN) \quad && X^{\AN}_j(t) = u_j+W_j(t)
+ \frac{\beta}{2} \int_0^t A^{2 \pi r}_{N}
\left(t_{\ast}-s, \sum_{\ell=1}^N X^{\AN}_{\ell}(s)-\kappa_N \right) ds& \nonumber\\
& && \hphantom{X^{\AN}_j(t) =}{} + \frac{\beta}{2} \sum_{\substack{ 1 \leq k \leq N, \\ k \not=j}}
\int_0^t A^{2 \pi r}_{N}\big(t_{\ast}-s, X^{\AN}_j(s)-X^{\AN}_k(s)\big) ds,& \nonumber\\
&&& j=1, \dots, N, \ \ \text{in $\R$},\label{eqn:SDE_A1}
\end{alignat}
for $\u=(u_1, \dots, u_N) \in \W_N$, where $W_j(t)$, $t \geq 0$, $j =1, \dots, N$ are independent one-dimensional standard Brownian motions, and
\begin{gather*}
\kappa_N= \begin{cases}
\pi r (N-1), & \text{if $N$ is even}, \\
\pi r (N-2), & \text{if $N$ is odd}.
\end{cases}
\end{gather*}
We called this family of $N$-particle systems on $\R$, $\X^{\AN}(t)=\big(X^{\AN}_1(t), \dots, X^{\AN}_N(t)\big)$, $t \in [0, t_{\ast})$, the {\it elliptic Dyson model of type~A} with parameter~$\beta$. By~(\ref{eqn:A_t_ast}) in Appendix~\ref{sec:appendixA_2}, (\ref{eqn:SDE_A1})~gives
\begin{gather*}
d X^{\AN}_j(t) \sim d W_j(t) + \frac{\beta}{2} \frac{v^{\AN}_j - X^{\AN}_j(t)}{t_{\ast}-t} dt, \qquad j=1,2, \dots, N,
\quad \text{in $t \uparrow t_{\ast}$},
\end{gather*}
with
\begin{gather*}
v^{\AN}_j = \begin{cases}
\displaystyle \frac{\pi r}{N}(2j-1), & \text{if $N$ is even}, \vspace{1mm}\\
\displaystyle \frac{2 \pi r}{N}(j-1), & \text{if $N$ is odd},
\end{cases} \qquad j=1,2, \dots, N,
\end{gather*}
which give equidistant-spacing conf\/igurations $\v^{\AN}=\big(v^{\AN}_1, \dots, v^{\AN}_N\big)$. This implies that the elliptic Dyson model of type~A is realized as a system of interacting {\it Brownian bridges} (see, for instance, \cite[Part~I, Section~IV.4.22]{BS02}) pinned at the conf\/iguration $\v^{\AN}$ at time~$t_{\ast}$~\cite{Kat16}. When the system of SDEs is temporally homogeneous, it is well known that the corresponding Kolmogorov (Fokker--Planck) equation for transition probability density can be transformed into a {\it Calogero--Moser--Sutherland quantum system} (see, for instance, \cite[Chapter~11]{For10}). The present system of interacting Brownian bridges is temporally inhomogeneous, however, and the Kolmogorov equation is mapped into a Schr\"odinger-type equation with {\it time-dependent} Hamiltonian and the {\it time-dependent} ground energy~\cite{Kat16}. The obtained quantum system is elliptic, but dif\/ferent from the {\it elliptic Calogero--Moser--Sutherland model} extensively studied as a quantum integrable system \cite{FV97,KW00,OP83,Sut75,Tak00}. We found at the same time that the interaction among particles vanishes when the parameter is chosen to be a special value ($\beta=2$ in our case) as found in the usual Calogero--Moser--Sutherland models. We applied the determinantal-martingale-me\-thod~\cite{Kat14} and proved that the elliptic Dyson model of type~A with $\beta=2$ is an {\it integrable stochastic process} in a sense that it is determinantal for a set of observables~\cite{Kat15,Kat16}.

In the present paper, we report the results for other six systems, $\RN=\BN, \BNv, \CN, \CNv$, $\BCN, \DN$. Here we f\/irst construct the six families of determinantal processes (Theorem~\ref{thm:main1}). Then for the three families $\BN$, $\CN$ and $\DN$, we clarify the systems of SDEs (with parameter $\beta=2$) which are solved by our new determinantal processes (Theorem~\ref{thm:main2}).

For $N \in \{2,3, \dots\}$, $0 < r < \infty$, def\/ine
\begin{gather*}
\W_N^{(0, \pi r)} =\big\{\x =(x_1, x_2, \dots, x_N) \in \R^N\colon 0 < x_1 < x_2 < \cdots < x_N < \pi r \big\}.
\end{gather*}
Let $\u=(u_1, u_2, \dots, u_N) \in \W_N^{(0, \pi r)}$, and
\begin{gather}
\tau^{\RN}(t)=\frac{i \cN^{\RN} (t_{\ast}-t)}{2 \pi r^2},\label{eqn:tau_R}
\end{gather}
where the numbers $\cN^{\RN}$ are given by
\begin{gather}
\cN^{\RN} = \begin{cases}
N, & \RN = \AN, \\
2N-1, & \RN = \BN, \\
2N,& \RN = \BNv, \CNv, \\
2(N+1),& \RN =\CN, \\
2N+1,& \RN = \BCN, \\
2(N-1),& \RN = \DN.
\end{cases}\label{eqn:cN_R}
\end{gather}
For $t \in [0, t_{\ast})$ put
\begin{gather}
D^{\RN}_{\u}(t, \x)= \frac{c^{\RN}_0\big(\tau^{\RN}(t)\big)}{c^{\RN}_0\big(\tau^{\RN}(0)\big)}\prod_{\ell=1}^N
\frac{\vartheta_1\big(c^{\RN}_1 x_{\ell}/ 2\pi r; c^{\RN}_2 \tau^{\RN}(t)\big)}{\vartheta_1\big(c^{\RN}_1 u_{\ell}/ 2\pi r; c^{\RN}_2 \tau^{\RN}(0)\big)}
\nonumber\\
\hphantom{D^{\RN}_{\u}(t, \x)=}{} \times
\prod_{1 \leq j < k \leq N} \frac{\vartheta_1\big((x_k-x_j)/2 \pi r; \tau^{\RN}(t)\big)}
{\vartheta_1\big((u_k-u_j)/2 \pi r; \tau^{\RN}(0)\big)}\frac{\vartheta_1\big((x_k+x_j)/2 \pi r; \tau^{\RN}(t)\big)}{\vartheta_1\big((u_k+u_j)/2 \pi r; \tau^{\RN}(0)\big)}
\nonumber\\
\hphantom{D^{\RN}_{\u}(t, \x)=}{}\mbox{for $\RN=\BN, \BNv, \CN, \CNv$},\label{eqn:D_Ra}
\\
D^{\BCN}_{\u}(t, \x)= \frac{c^{\BCN}_0\big(\tau^{\BCN}(t)\big)}{c^{\BCN}_0\big(\tau^{\BCN}(0)\big)}\prod_{\ell=1}^N
\frac{\vartheta_1\big(x_{\ell}/ 2\pi r; \tau^{\BCN}(t)\big)}{\vartheta_1\big(u_{\ell}/ 2\pi r; \tau^{\BCN}(0)\big)}
\frac{\vartheta_0\big(x_{\ell}/ \pi r; 2 \tau^{\BCN}(t)\big)}{\vartheta_0\big(u_{\ell}/ \pi r; 2 \tau^{\BCN}(0)\big)}\nonumber\\
\hphantom{D^{\BCN}_{\u}(t, \x)=}{} \times
\prod_{1 \leq j < k \leq N}\frac{\vartheta_1\big((x_k-x_j)/2 \pi r; \tau^{\BCN}(t)\big)}{\vartheta_1\big((u_k-u_j)/2 \pi r; \tau^{\BCN}(0)\big)}
\frac{\vartheta_1\big((x_k+x_j)/2 \pi r; \tau^{\BCN}(t)\big)}{\vartheta_1\big((u_k+u_j)/2 \pi r; \tau^{\BCN}(0)\big)},\label{eqn:D_Rb}
\\
D^{\DN}_{\u}(t, \x)= \frac{c^{\DN}_0\big(\tau^{\DN}(t)\big)}{c^{\DN}_0\big(\tau^{\DN}(0)\big)}\nonumber\\
\hphantom{D^{\DN}_{\u}(t, \x)=}{} \times
\prod_{1 \leq j < k \leq N} \frac{\vartheta_1\big((x_k-x_j)/2 \pi r; \tau^{\DN}(t)\big)}{\vartheta_1\big((u_k-u_j)/2 \pi r; \tau^{\DN}(0)\big)}
\frac{\vartheta_1\big((x_k+x_j)/2 \pi r; \tau^{\DN}(t)\big)}{\vartheta_1\big((u_k+u_j)/2 \pi r; \tau^{\DN}(0)\big)},\label{eqn:D_Rc}
\end{gather}
where
\begin{gather}
c^{\RN}_0(\tau)= \eta(\tau)^{-N(N-1)} \qquad \text{for $\RN=\BN, \CN$},\nonumber\\
c^{\BNv}_0(\tau)= \eta(\tau)^{-(N-1)^2} \eta(2 \tau)^{-(N-1)},\nonumber\\
c^{\CNv}_0(\tau)= \eta(\tau)^{-(N-1)^2} \eta(\tau/2)^{-(N-1)},\nonumber\\
c^{\BCN}_0(\tau)= \eta(\tau)^{-N(N-1)} \eta(2 \tau)^{-N},\nonumber\\
c^{\DN}_0(\tau)= \eta(\tau)^{-N(N-2)},\label{eqn:c_0}
\end{gather}
and
\begin{gather}
c^{\BN}_1 =c^{\CNv}_1=1, \qquad c^{\BNv}_1 =c^{\CN}_1=2,\nonumber\\
c^{\BN}_2 =c^{\CN}_2=1, \qquad c^{\BNv}_2 =2, \qquad c^{\CNv}_2 =1/2.\label{eqn:c_1_2_R}
\end{gather}
Here $\eta(\tau)$ denotes the Dedekind modular function (see, for instance,~\cite[Section~23.15]{NIST10})
\begin{gather}
\eta(\tau)=e^{\tau \pi i/12} \prod_{n=1}^{\infty}\big(1-e^{2 n \tau \pi i}\big), \qquad \Im \tau > 0.\label{eqn:Dedekind1}
\end{gather}
We call (\ref{eqn:D_Ra})--(\ref{eqn:D_Rc}) the {\it determinantal martingale-functions}~\cite{Kat14} (see Sections~\ref{sec:basic} and~\ref{sec:martingale}).

In the interval $[0, \pi r]$, we consider the $N$-particle system of one-dimensional standard Brownian motions $\B(t)=(B_1(t), \dots, B_N(t))$, $t \geq 0$ started at $\u=(u_1, \dots, u_N) \in \W_N^{(0, \pi r)}$ with either an absorbing or ref\/lecting boundary condition at the endpoints of the interval, $0$ and $\pi r$. The {\it transition probability density} of each particle is generally denoted by $p^{[0, \pi r]}$. The boundary conditions at 0 and $\pi r$ are indicated by~$b$ and~$b'$, respectively, and the transition probability density with the specif\/ied boundary conditions $b$, $b'$ is written as $p^{[0, \pi]}_{b,b'}$. In this paper the absorbing (resp.\ ref\/lecting) boundary condition is abbreviated as~`a' (resp.~`r'). By the ref\/lection principle of Brownian motion (see, for instance, \cite[Appendixes~1.5 and~1.6]{BS02}), if both boundaries are absorbing, the transition probability density is given by
\begin{gather}
p^{[0, \pi r]}_{\rm aa}(t, y|x) = \sum_{k=-\infty}^{\infty} \big\{ p_{\rm BM}(t, y+2 \pi r k| x) - p_{\rm BM}(t, y+2 \pi r k | -x) \big\},\label{eqn:p_abs}
\end{gather}
and if both are ref\/lecting, it is given by
\begin{gather}
p^{[0, \pi r]}_{\rm rr}(t, y|x) = \sum_{k=-\infty}^{\infty} \big\{ p_{\rm BM}(t, y+2 \pi r k| x) + p_{\rm BM}(t, y+2 \pi r k | -x) \big\},\label{eqn:p_ref}
\end{gather}
for $x, y \in [0, \pi r]$, $t \geq 0$, where $p_{\rm BM}(t, y|x)$ denotes the transition probability density of the one-dimensional standard Brownian motion
\begin{gather}
p_{\rm BM}(t, y|x) = \begin{cases}
\displaystyle \frac{e^{-(x-y)^2/2t}}{\sqrt{2 \pi t}}, & t > 0, \\
\delta(x-y), & t =0.
\end{cases} \label{eqn:p_BM}
\end{gather}
We write the probability law of such a system of boundary-conditioned Brownian motions in $[0, \pi r]$ as $\rP^{[0, \pi r]}_{\u}$. In $\rP^{[0, \pi r]}_{\u}$, put
\begin{gather}
T_{\rm collision}=\inf \big\{ t > 0 \colon B_j(t)=B_k(t) \ \text{for any $j \not= k$} \big\},\label{eqn:T1}
\end{gather}
i.e., the f\/irst collision-time of the $N$-particle system of Brownian motions in the interval $[0, \pi r]$. Let ${\bf 1}(\omega)$ be the indicator function of a condition $\omega$; ${\bf 1}(\omega)=1$ if $\omega$ is satisf\/ied, and ${\bf 1}(\omega)=0$ otherwise. Then we def\/ine
\begin{gather}
\P_{\u}^{\RN} \big|_{\cF_t} = {\bf 1}(T_{\rm collision} > t) D^{\RN}_{\u}(t, \B(t)) \rP_{\u}^{[0, \pi r]} \big|_{\cF_t},
\qquad t \in [0, t_{\ast}), \label{eqn:Plaw_R1}
\end{gather}
where $D^{\RN}_{\u}$ are given by (\ref{eqn:D_Ra})--(\ref{eqn:D_Rc}) and $\cF_t$ denotes the f\/iltration associated with the Brownian motion (see Section~\ref{sec:notion_martingale}). That is, the Radon--Nikodym derivative of $\P_{\u}^{\RN}$ with respect to the Wiener measure $\rP_{\u}^{[0, \pi r]}$ of Brownian motion is given by ${\bf 1}(T_{\rm collision} > t) D^{\RN}_{\u}(t, \B(t))$ at each time $t \in [0, t_{\ast})$. Therefore, a~stochastic process with~$N$ particles governed by $\P_{\u}^{\RN}$ is well-def\/ined as a~realization of non-intersecting paths which are absolutely continuous to the $N$-particle paths of independent Brownian motions in $[0, \pi r]$.

For $y \in \R$, $\delta_y(\cdot)$ denotes the delta measure such that $\delta_y(\{x\}) = 1$ if $x=y$ and $\delta_y(\{x\})=0$ otherwise. The f\/irst theorem of this paper is the following.

\begin{Theorem}\label{thm:main1} Assume that $0< t_{\ast} < \infty$, $0 < r < \infty$. For each $N \in \{2, 3, \dots\}$, $\RN=\BN$, $\BNv, \CN, \CNv, \BCN,\DN$, $\u=(u_1, \dots, u_N) \in \W_N^{(0, \pi r)}$, $\P_{\u}^{\RN}$ defined by~\eqref{eqn:Plaw_R1} gives a probability measure and defines a measure-valued stochastic process
\begin{gather}
\Xi^{\RN}(t, \cdot)=\sum_{j=1}^N \delta_{X^{\RN}_j(t)}(\cdot), \qquad t \in [0, t_{\ast}).\label{eqn:Xi_R1}
\end{gather}
The process $\big(\big(\Xi^{\RN}(t)\big)_{t \in [0, t_{\ast})}, \P_{\u}^{\RN}\big)$ is determinantal with the spatio-temporal correlation kernel
\begin{gather}
\K^{\RN}_{\u}(s, x; t, y)= \sum_{j=1}^N p^{[0, \pi r]} (s, x|u_j) M^{\RN}_{\u, u_j}(t, y) -{\bf 1}(s>t) p^{[0, \pi r]}(s-t, x|y),\label{eqn:K1}
\end{gather}
$(s, x), (t, y) \in [0, t_{\ast}) \times [0, \pi r]$, where
\begin{gather}
M^{\RN}_{\u, u_j}(t, x)= \int_{-\infty}^{\infty} \Phi^{\RN}_{\u, u_j}(x+i \widetilde{x})p_{\rm BM}(t, \widetilde{x}|0) d \widetilde{x}\label{eqn:MB1}
\end{gather}
with the sets of entire functions $($the elliptic Lagrange interpolation functions$)$
\begin{gather}
\Phi^{\RN}_{\u, u_j}(z)= \frac{\vartheta_1\big(c^{\RN}_1 z/2 \pi r; c^{\RN}_2 \tau^{\RN}(0)\big)}
{\vartheta_1\big(c^{\RN}_1 u_j/2 \pi r; c^{\RN}_2 \tau^{\RN}(0)\big)}\nonumber\\
\hphantom{\Phi^{\RN}_{\u, u_j}(z)=}{} \times
\prod_{\substack{1 \leq \ell \leq N, \cr \ell \not=j}}
\frac{\vartheta_1\big((z-u_{\ell})/2 \pi r; \tau^{\RN}(0)\big)}
{\vartheta_1\big((u_j-u_{\ell})/2 \pi r; \tau^{\RN}(0)\big)}
\frac{\vartheta_1\big((z+u_{\ell})/2 \pi r; \tau^{\RN}(0)\big)}
{\vartheta_1\big((u_j+u_{\ell})/2 \pi r; \tau^{\RN}(0)\big)}
\nonumber\\
\hphantom{\Phi^{\RN}_{\u, u_j}(z)=}{} \text{for $\RN=\BN, \BNv, \CN, \CNv$},\label{eqn:Phi_Ra}\\
\Phi^{\BCN}_{\u, u_j}(z) =\frac{\vartheta_1\big(z/2 \pi r; \tau^{\BCN}(0)\big)}{\vartheta_1\big(u_j/2 \pi r; \tau^{\BCN}(0)\big)}
\frac{\vartheta_0\big(z/\pi r; 2 \tau^{\BCN}(0)\big)}{\vartheta_0\big(u_j/\pi r; 2 \tau^{\BCN}(0)\big)}\nonumber\\
\hphantom{\Phi^{\BCN}_{\u, u_j}(z) =}{} \times
\prod_{\substack{1 \leq \ell \leq N, \cr \ell \not=j}}\frac{\vartheta_1\big((z-u_{\ell})/2 \pi r; \tau^{\BCN}(0)\big)}
{\vartheta_1\big((u_j-u_{\ell})/2 \pi r; \tau^{\BCN}(0)\big)}\frac{\vartheta_1\big((z+u_{\ell})/2 \pi r; \tau^{\BCN}(0)\big)}{\vartheta_1\big((u_j+u_{\ell})/2 \pi r; \tau^{\BCN}(0)\big)},\label{eqn:Phi_Rb}
\\
\Phi^{\DN}_{\u, u_j}(z) =\prod_{\substack{1 \leq \ell \leq N, \cr \ell \not=j}}\frac{\vartheta_1\big((z-u_{\ell})/2 \pi r; \tau^{\DN}(0)\big)}
{\vartheta_1\big((u_j-u_{\ell})/2 \pi r; \tau^{\DN}(0)\big)}\frac{\vartheta_1\big((z+u_{\ell})/2 \pi r; \tau^{\DN}(0)\big)}
{\vartheta_1\big((u_j+u_{\ell})/2 \pi r; \tau^{\DN}(0)\big)},\label{eqn:Phi_Rc}
\end{gather}
$j=1,2, \dots, N$. Here $\tau^{\RN}(t)$ are given by \eqref{eqn:tau_R} with \eqref{eqn:cN_R}, and $c^{\RN}_1$, $c^{\RN}_2$ for $\RN=\BN, \BNv, \CN, \CNv$ are given by~\eqref{eqn:c_1_2_R}.
\end{Theorem}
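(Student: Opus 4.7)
The plan is to extend the determinantal-martingale method of~\cite{Kat14}, which was used in~\cite{Kat15,Kat16} for the elliptic Dyson model of type~$\AN$, to the six new root systems $\RN=\BN,\BNv,\CN,\CNv,\BCN,\DN$; the key new input is the Rosengren--Schlosser elliptic determinant evaluation~\cite{RS06} of the Macdonald denominators attached to the corresponding af\/f\/ine root systems. The f\/irst step is to rewrite $D^{\RN}_{\u}(t,\x)$ as a determinant. The theta-function numerators in~\eqref{eqn:D_Ra}--\eqref{eqn:D_Rc} are precisely the Macdonald denominators for $\RN$, which by Rosengren--Schlosser admit $N\times N$ elliptic determinant evaluations; the Dedekind-$\eta$ prefactors $c^{\RN}_0(\tau)$ in~\eqref{eqn:c_0} are chosen so that after dividing by $D^{\RN}_{\u}(0,\u)$ one obtains
\[
D^{\RN}_{\u}(t,\x)=\det_{1\le j,k\le N}\bigl[\widetilde{M}^{\RN}_{\u,u_j}(t,x_k)\bigr],
\]
where $\widetilde{M}^{\RN}_{\u,u_j}(t,x)$ coincides with $M^{\RN}_{\u,u_j}(t,x)$ in~\eqref{eqn:MB1} under a Gaussian integral representation of theta ratios. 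In particular $\widetilde{M}^{\RN}_{\u,u_j}(0,u_k)=\delta_{jk}$, which expresses the Lagrange interpolation property of the entire functions $\Phi^{\RN}_{\u,u_j}$ in~\eqref{eqn:Phi_Ra}--\eqref{eqn:Phi_Rc} at the initial conf\/iguration~$\u$.

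The second step is the martingale property. Each matrix entry $\widetilde{M}^{\RN}_{\u,u_j}(t,x)$ must be a space-time harmonic function for the one-dimensional Brownian motion on $[0,\pi r]$ with the absorbing/ref\/lecting boundary conditions proper to $\RN$. This follows from two facts: Jacobi's theta function $\vartheta_1(v;\tau)$ satisf\/ies the heat equation $4\pi i\,\partial_\tau \vartheta_1=\partial_v^2\vartheta_1$, and the prescriptions~\eqref{eqn:tau_R}--\eqref{eqn:cN_R} together with~\eqref{eqn:c_1_2_R} convert $\tau$-time into the physical time $t$ in exactly the right way; the parity and quasi-periodicity of the theta factors reproduce the ref\/lection-principle series~\eqref{eqn:p_abs} or~\eqref{eqn:p_ref}. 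The Karlin--McGregor formula then shows that ${\bf 1}(T_{\rm collision}>t)\,D^{\RN}_{\u}(t,\B(t))$ is a positive $(\cF_t)$-martingale under $\rP^{[0,\pi r]}_{\u}$ with initial value~$1$, so~\eqref{eqn:Plaw_R1} def\/ines a probability measure and a well-def\/ined non-colliding process $\Xi^{\RN}$ via~\eqref{eqn:Xi_R1}.

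The third step is the kernel. The determinantal form of the correlation functions then follows from the general determinantal-martingale framework of~\cite{Kat14}: when the Radon--Nikodym derivative is a Karlin--McGregor determinant whose entries are space-time harmonic and obey $\widetilde{M}^{\RN}_{\u,u_j}(0,u_k)=\delta_{jk}$, the biorthogonal Eynard--Mehta reduction delivers the kernel in the form~\eqref{eqn:K1}: the f\/irst sum is the ``source'' part built from one-particle propagators $p^{[0,\pi r]}(s,x|u_j)$ paired with $M^{\RN}_{\u,u_j}(t,y)$, and the subtraction ${\bf 1}(s>t)p^{[0,\pi r]}(s-t,x|y)$ accounts for the unequal-time piece.

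The hardest part will be Step~2: for each of the six root systems one must verify that the specif\/ic combination of $\vartheta_1$ (and, in the $\BCN$ case, $\vartheta_0$) factors in $\Phi^{\RN}_{\u,u_j}$, together with the scaling by $c^{\RN}_1,c^{\RN}_2$, produces an entire function whose parity and quasi-periodicity exactly match the symmetrized ref\/lection series~\eqref{eqn:p_abs} or~\eqref{eqn:p_ref} appropriate to~$\RN$. The choice~\eqref{eqn:cN_R} of $\cN^{\RN}$ is essentially forced by this compatibility, and the $\BCN$ case is the most delicate because of the extra $\vartheta_0$ factor and the resulting mixed boundary behaviour at the endpoints.
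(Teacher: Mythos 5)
Your proposal follows the same overall strategy as the paper: use the Rosengren--Schlosser evaluations (the paper's Lemma~\ref{thm:RS} and Proposition~\ref{thm:det_mar_fun}) to identify $D^{\RN}_{\u}$ as a determinantal martingale-function with entries satisfying the interpolation property, and then feed it into the Fredholm-determinant machinery of Proposition~\ref{thm:Fredholm}. The one point where you genuinely diverge is in how the collision indicator ${\bf 1}(T_{\rm collision}>t)$ is handled. You obtain normalization by noting that ${\bf 1}(T_{\rm collision}>t)D^{\RN}_{\u}(t,\B(t))$ is the stopped version of the positive martingale $D^{\RN}_{\u}(t,\B(t))$ (which vanishes at collisions), and you delegate the kernel to an Eynard--Mehta reduction. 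The paper instead proves one multi-time identity, \eqref{eqn:E_eq1}${}={}$\eqref{eqn:E_eq2}: for symmetric observables the expectation under $\P^{\RN}_{\u}$ equals that under the unrestricted signed measure \eqref{eqn:signed_meas1}, the indicator being removed by expanding the KMLGV determinants over permutations and absorbing the signs into the antisymmetric factor $\prod k^{\RN}_2$. That identity is what licenses applying Proposition~\ref{thm:Fredholm} (which is formulated for the signed measure without the indicator) and gives normalization for free by setting $g\equiv 1$; if you invoke Eynard--Mehta directly on the KMLGV multi-time densities the same symmetrization is hidden inside that theorem, so your route is sound, but this step should be made explicit rather than absorbed into the citation. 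You correctly single out the remaining verification -- that the parity and quasi-periodicity of the theta factors let $p_{\rm BM}$ be replaced by the reflected series \eqref{eqn:p_abs}--\eqref{eqn:p_ref} in the martingale identities -- which the paper itself leaves largely implicit in the final sentence of its proof.
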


The second theorem of this paper is the following.
\begin{Theorem}\label{thm:main2}
Let $N \in \{2, 3, \dots\}$, $0< t_{\ast} < \infty$, $0 < r < \infty$. Assume $\u=(u_1, \dots, u_N) \in \W_N^{(0, \pi r)}$. In the interval $[0, \pi r]$, for $\big(\big(\Xi^{\BN}(t)\big)_{t \in [0, t_{\ast})}, \P^{\BN}_{\u}\big)$ we put an absorbing boundary condition at~$0$ and a~reflecting boundary condition at $\pi r$, for $\big(\big(\Xi^{\CN}(t)\big)_{t \in [0, t_{\ast})}, \P^{\CN}_{\u}\big)$ we put an absorbing boundary condition both at~$0$ and~$\pi r$, and for $\big(\big(\Xi^{\DN}(t)\big)_{t \in [0, t_{\ast})}, \P^{\DN}_{\u}\big)$ we put a reflecting boundary condition both at~$0$ and~$\pi r$, respectively. If we set $\Xi^{\RN}(t)= \sum\limits_{j=1}^N \delta_{X^{\RN}_j(t)}$, then $\X^{\RN}(t)=\big(X^{\RN}_1(t), \dots, X^{\RN}_N(t)\big)$, $\RN=\BN, \CN, \DN$, solve the following systems of SDEs
\begin{gather}
 (\BN) \quad X^{\BN}_j(t) = u_j+W_j(t) +\int_0^t A^{2 \pi r}_{2N-1}\big(t_{\ast}-s, X^{\BN}_j(s)\big) ds\nonumber\\
\hphantom{(\BN) \quad X^{\BN}_j(t) =}{} + \sum_{\substack{ 1 \leq k \leq N, \cr k \not=j}} \int_0^t \big\{ A^{2 \pi r}_{2N-1}\big(t_{\ast}-s, X^{\BN}_j(s)-X^{\BN}_k(s)\big)\nonumber\\
\hphantom{(\BN) \quad X^{\BN}_j(t) =}{} +A^{2 \pi r}_{2N-1}\big(t_{\ast}-s, X^{\BN}_j(s)+X^{\BN}_k(s)\big) \big\} ds,\qquad j=1,2, \dots, N, \nonumber\\
\hphantom{(\BN) \quad X^{\BN}_j(t) =}{}
\text{in $[0, \pi r]$ with a reflecting boundary condition at $\pi r$},\label{eqn:SDE_B1}
\\
 (\CN) \quad X^{\CN}_j(t) = u_j+W_j(t)+2 \int_0^t A^{2 \pi r}_{2(N+1)}\big(t_{\ast}-s, 2 X^{\CN}_j(s)\big) ds\nonumber\\
\hphantom{(\CN) \quad X^{\CN}_j(t) =}{} + \sum_{\substack{ 1 \leq k \leq N, \cr k \not=j}}
\int_0^t \big\{ A^{2 \pi r}_{2(N+1)}\big(t_{\ast}-s, X^{\CN}_j(s)-X^{\CN}_k(s)\big) \nonumber\\
\hphantom{(\CN) \quad X^{\CN}_j(t) =}{} +A^{2 \pi r}_{2(N+1)}\big(t_{\ast}-s, X^{\CN}_j(s)+X^{\CN}_k(s)\big) \big\} ds, \qquad j=1,2, \dots, N, \nonumber\\
\hphantom{(\CN) \quad X^{\CN}_j(t) =}{} \text{in $[0, \pi r]$},\label{eqn:SDE_C1}
\\
 (\DN) \quad X^{\DN}_j(t) = u_j+W_j(t) + \sum_{\substack{ 1 \leq k \leq N, \cr k \not=j}}
\int_0^t \big\{ A^{2 \pi r}_{2(N-1)}\big(t_{\ast}-s, X^{\DN}_j(s)-X^{\DN}_k(s)\big)\nonumber\\
\hphantom{(\DN) \quad X^{\DN}_j(t) =}{}+A^{2 \pi r}_{2(N-1)}\big(t_{\ast}-s, X^{\DN}_j(s)+X^{\DN}_k(s)\big) \big\} ds,\qquad j=1,2, \dots, N, \nonumber\\
\hphantom{(\DN) \quad X^{\DN}_j(t) =}{}
\text{in $[0, \pi r]$ with a reflecting boundary condition both at $0$ and $\pi r$},\label{eqn:SDE_D1}
\end{gather}
where $W_j(t)$, $t \geq 0$, $j=1,2, \dots, N$ are independent one-dimensional standard Brownian motions.
\end{Theorem}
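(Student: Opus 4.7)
My plan is to derive the SDEs by combining the Radon--Nikodym description~(\ref{eqn:Plaw_R1}) of $\P^{\RN}_{\u}$ with a Girsanov (equivalently, Doob $h$-transform) argument, and then to identify the resulting drifts term by term with the theta logarithmic derivatives $A^{2\pi r}_{\cN^{\RN}}$ via the definition~(\ref{eqn:A1}). The essential input is that $D^{\RN}_{\u}(t, \B(t))$ is a positive local martingale under the boundary-conditioned Wiener measure $\rP^{[0, \pi r]}_{\u}$ on the non-collision event; this is the content of the martingale sections referenced in the introduction and will be taken as given here.

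Granting the martingale property, It\^o's formula yields $dD^{\RN}_{\u}(t, \B(t)) = \sum_{j=1}^N \partial_{x_j} D^{\RN}_{\u}(t, \B(t))\, dB_j(t)$, so $dD^{\RN}_{\u}/D^{\RN}_{\u} = \sum_j \partial_{x_j}(\log D^{\RN}_{\u})\, dB_j$ in Dol\'eans--Dade form. Girsanov's theorem then guarantees that, under $\P^{\RN}_{\u}$, the processes $W_j(t) := B_j(t) - u_j - \int_0^t \partial_{x_j} \log D^{\RN}_{\u}(s, \B(s))\, ds$ are independent one-dimensional standard Brownian motions, while the reflecting/absorbing boundary behavior prescribed in Theorem~\ref{thm:main2} is inherited directly from $\rP^{[0, \pi r]}_{b, b'}$ through the absolutely continuous change of measure. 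It then suffices to compute $\partial_{x_j} \log D^{\RN}_{\u}(t, \x)$ from the explicit formulas~(\ref{eqn:D_Ra})--(\ref{eqn:D_Rc}), noting that the $t$-only prefactor $c^{\RN}_0(\tau^{\RN}(t))/c^{\RN}_0(\tau^{\RN}(0))$ and the $\u$-dependent denominators contribute nothing.

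Three types of contributions arise from the product structure. The single-particle factor $\vartheta_1(c^{\RN}_1 x_j/2\pi r; c^{\RN}_2 \tau^{\RN}(t))$ in~(\ref{eqn:D_Ra}) gives, using~(\ref{eqn:A1}) together with the parameter values in~(\ref{eqn:tau_R})--(\ref{eqn:c_1_2_R}), exactly the self-interaction $A^{2\pi r}_{2N-1}(t_\ast - s, X^{\BN}_j)$ for $\RN = \BN$ (where $c^{\BN}_1 = c^{\BN}_2 = 1$, $\cN^{\BN} = 2N-1$) and $2 A^{2\pi r}_{2(N+1)}(t_\ast - s, 2 X^{\CN}_j)$ for $\RN = \CN$ (where $c^{\CN}_1 = 2$, $c^{\CN}_2 = 1$, $\cN^{\CN} = 2(N+1)$); no such term appears for $\RN = \DN$. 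The difference factors $\vartheta_1((x_k - x_j)/2\pi r; \tau^{\RN}(t))$ produce, using the oddness in $v$ of $(\log \vartheta_1)'(v; \tau)$, the sum $\sum_{k \neq j} A^{2\pi r}_{\cN^{\RN}}(t_\ast - s, X^{\RN}_j - X^{\RN}_k)$, while the sum factors $\vartheta_1((x_k + x_j)/2\pi r; \tau^{\RN}(t))$ produce $\sum_{k \neq j} A^{2\pi r}_{\cN^{\RN}}(t_\ast - s, X^{\RN}_j + X^{\RN}_k)$. Reassembling the three contributions in the Girsanov drift reproduces~(\ref{eqn:SDE_B1}),~(\ref{eqn:SDE_C1}), and~(\ref{eqn:SDE_D1}) exactly.

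The main technical obstacle is making the Girsanov step rigorous on the whole interval $[0, t_\ast)$ in the presence of the indicator $\mathbf{1}(T_{\rm collision} > t)$ and of the vanishing of $D^{\RN}_{\u}$ on the collision set and on the absorbing parts of $\{0, \pi r\}^N$ (note that $\vartheta_1(v; \tau)$ vanishes at $v \in \Z$, so the single-particle factors in~(\ref{eqn:D_Ra}) encode precisely the absorbing boundaries declared in Theorem~\ref{thm:main2}). The standard remedy is to localize: perform the change of measure up to the first exit from nested compact sub-domains of $\W_N^{(0, \pi r)} \times [0, t_\ast)$ on which $D^{\RN}_{\u}$ is bounded away from zero, deduce the SDEs there, and pass to the limit using the non-collision and non-absorption properties that are already part of Theorem~\ref{thm:main1}. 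Once this localization is in place, the three theta computations above are routine, the only subtlety being the matching of $c^{\RN}_1, c^{\RN}_2, \cN^{\RN}$ that reflects the Rosengren--Schlosser elliptic Macdonald denominator structure underlying each root system.
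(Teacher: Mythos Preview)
Your approach is correct but takes a genuinely different route from the paper.  The paper proves Theorem~\ref{thm:main2} via Lemma~\ref{thm:Kolmogorov}, which verifies \emph{directly} that the transition density $p^{\RN}(t,\y|s,\x)$ of~(\ref{eqn:p_R1}) satisfies the backward Kolmogorov equations~(\ref{eqn:Kolm_B1})--(\ref{eqn:Kolm_D1}).  The core of that proof is establishing equation~(\ref{eqn:eqA1}), namely
\[
\Big(\partial_s+\tfrac12\Delta\Big)w^{\RN}-\frac{1}{w^{\RN}}\sum_j\big(\partial_{x_j}w^{\RN}\big)^2=0,
\]
which is the space--time harmonicity of $1/w^{\RN}\propto D^{\RN}_{\u}$.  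The paper does not invoke the martingale property from Section~\ref{sec:basic}; instead it rewrites $A^{2\pi r}_{\cN^{\RN}}$ through Weierstrass $\zeta$ and $\wp$ (equations~(\ref{eqn:A_B1})--(\ref{eqn:A_B2})), applies the addition formula~(\ref{eqn:zeta_wp}) to obtain the nontrivial cancellation identities~(\ref{eqn:identities1})--(\ref{eqn:identities3}), and finally reduces~(\ref{eqn:eqA2}) to the ODEs~(\ref{eqn:g_RN_1}) for $g^{\RN}(s)$, which independently recovers the Dedekind $\eta$ prefactors $c^{\RN}_0$.

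Your Girsanov argument bypasses all of this Weierstrass--function work by taking the martingale property of $D^{\RN}_{\u}(t,\B(t))$ (Proposition~\ref{thm:det_mar_fun} together with Section~\ref{sec:basic}) as input, so that the drift is read off immediately from $\partial_{x_j}\log D^{\RN}_{\u}$.  This is shorter and more conceptual.  Two comments: (i) the martingale property in Section~\ref{sec:basic} is stated for \emph{free} Brownian motion $\rP_{\u}$, whereas your change of measure is against $\rP^{[0,\pi r]}_{\u}$; for the reflecting coordinates in types $B_N$ and $D_N$ you therefore need the Neumann condition $\partial_{x_j}D^{\RN}_{\u}\big|_{x_j=0\text{ or }\pi r}=0$ to kill the local-time contribution in It\^o's formula --- this follows from the parity $\vartheta_1((x_k-x_j)/2\pi r)\vartheta_1((x_k+x_j)/2\pi r)$ being even in $x_j$ about both $0$ and $\pi r$, and should be stated.  (ii) The paper's route has the advantage of giving an \emph{independent} derivation of the exponents in $c^{\RN}_0(\tau)=\eta(\tau)^{-N(N-1)}$ (resp.\ $\eta(\tau)^{-N(N-2)}$), whereas in your argument these prefactors, being $x$-independent, drop out and are never tested.
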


We call the systems (\ref{eqn:SDE_B1})--(\ref{eqn:SDE_D1}) the {\it elliptic Dyson models of types B, C, D}, respectively. By~(\ref{eqn:A_t_ast}) in Appendix~\ref{sec:appendixA_2}, we see that in $t \uparrow t_{\ast}$
\begin{gather*}
d X^{\RN}_j(t) \sim d W_j(t) + \frac{v^{\RN}_j - X^{\RN}_j(t)}{t_{\ast}-t} dt,\qquad j=1,2, \dots, N, \qquad \RN=\BN, \CN, \DN
\end{gather*}
with
\begin{gather*}
v^{\BN}_j = \frac{2j-1}{2N-1} \pi r, \qquad v^{\CN}_j = \frac{j}{N+1} \pi r, \qquad v^{\DN}_j = \frac{j-1}{N-1} \pi r, \qquad j=1,2, \dots, N.
\end{gather*}
The above implies that the elliptic Dyson models of types B, C, and D are realized as the systems of interacting Brownian bridges pinned at the conf\/igurations $\v^{\RN}=\big(v^{\RN}_1, \dots, v^{\RN}_N\big)$, $\RN=\BN, \CN, \DN$, at time $t_{\ast}$. Note that $v^{\BN}_1 >0$, $v^{\BN}_N=\pi r$;
$v^{\CN}_1 >0$, $v^{\CN}_N < \pi r$; $v^{\DN}_1=0$, $v^{\DN}_N=\pi r$, corresponding to the situation such that the absorbing boundary condition is imposed at~0 for~$\BN$, and at~0 and $\pi r$ for $\CN$, while the ref\/lecting boundary condition is imposed at other endpoints of the interval $[0, \pi r]$.

We see
\begin{gather*}
\lim_{t_{\ast} \to \infty} A_{\cN}^{2 \pi r}(t_{\ast}-t,x) = \frac{1}{2 r} \cot \left( \frac{x}{2r} \right),
\end{gather*}
by (\ref{eqn:A1}) and (\ref{eqn:theta_asym}). Hence in the limit $t_{\ast} \to \infty$, the systems of SDEs (\ref{eqn:SDE_A1}) with $\beta=2$, and of (\ref{eqn:SDE_B1})--(\ref{eqn:SDE_D1}) become the following temporally homogeneous systems of SDEs for $t \in [0, \infty)$:
\begin{gather}
(\AN) \quad X^{\AN}_j(t) = u_j+W_j(t)-\frac{1}{2r} \int_0^t \tan \left( \frac{1}{2r} \sum_{\ell=1}^N X^{\AN}_{\ell}(s) \right) ds\nonumber\\
\hphantom{(\AN) \quad X^{\AN}_j(t) =}{} + \frac{1}{2r} \sum_{\substack{ 1 \leq k \leq N, \cr k \not=j}}
\int_0^t \cot \left( \frac{X^{\AN}_j(s)-X^{\AN}_k(s)}{2 r} \right) ds,\nonumber\\
\hphantom{(\AN) \quad X^{\AN}_j(t) =}{} j=1,2, \dots, N, \quad \text{in $\R$},\label{eqn:SDE_A2}\\
(\BN) \quad X^{\BN}_j(t) = u_j+W_j(t)+ \frac{1}{2r} \int_0^t \cot \left( \frac{X^{\BN}_j(s)}{2r} \right) ds\nonumber\\
\hphantom{(\BN) \quad X^{\BN}_j(t) =}{} + \frac{1}{2r} \sum_{\substack{ 1 \leq k \leq N, \cr k \not=j}}\int_0^t \left\{
\cot \left( \frac{X^{\BN}_j(s)-X^{\BN}_k(s)}{2 r} \right)\right.\nonumber\\
\left. \hphantom{(\BN) \quad X^{\BN}_j(t) =}{}+ \cot \left( \frac{X^{\BN}_j(s)+X^{\BN}_k(s)}{2 r} \right)\right\} ds,\quad j=1,2, \dots, N, \nonumber\\
\hphantom{(\BN) \quad X^{\BN}_j(t) =}{} \text{in $[0, \pi r]$ with a ref\/lecting boundary condition at $\pi r$},
\label{eqn:SDE_B2}\\
 (\CN) \quad X^{\CN}_j(t) = u_j+W_j(t)+ \frac{1}{r} \int_0^t \cot \left( \frac{X^{\CN}_j(s)}{r} \right) ds\nonumber\\
\hphantom{(\CN) \quad X^{\CN}_j(t) =}{} + \frac{1}{2r} \sum_{\substack{ 1 \leq k \leq N, \cr k \not=j}}\int_0^t \left\{
\cot \left( \frac{X^{\CN}_j(s)-X^{\CN}_k(s)}{2 r} \right)\right.\nonumber\\
\left. \hphantom{(\CN) \quad X^{\CN}_j(t) =}{} + \cot \left( \frac{X^{\CN}_j(s)+X^{\CN}_k(s)}{2 r} \right)\right\} ds,\quad
j=1,2, \dots, N, \quad \text{in $[0, \pi r]$},\label{eqn:SDE_C2}\\
 (\DN) \quad X^{\DN}_j(t) = u_j+W_j(t)+ \frac{1}{2r} \sum_{\substack{ 1 \leq k \leq N, \cr k \not=j}}
\int_0^t \left\{\cot \left( \frac{X^{\DN}_j(s)-X^{\DN}_k(s)}{2 r} \right)\right.\nonumber\\
\left.\hphantom{(\DN) \quad X^{\DN}_j(t) =}{} + \cot \left( \frac{X^{\DN}_j(s)+X^{\DN}_k(s)}{2 r} \right)\right\} ds,\quad j=1,2, \dots, N, \nonumber\\
\hphantom{(\DN) \quad X^{\DN}_j(t) =}{} \text{in $[0, \pi r]$ with a ref\/lecting boundary condition both at $0$ and $\pi r$},\label{eqn:SDE_D2}
\end{gather}
respectively. We will call the above systems (\ref{eqn:SDE_A2})--(\ref{eqn:SDE_D2}) the {\it trigonometric Dyson models of types A, B, C and D}, respectively.

Moreover, for
\begin{gather*}
\lim_{r \to \infty} \frac{1}{2r} \cot \left( \frac{x}{2r} \right) = \frac{1}{x},\qquad \lim_{r \to \infty} \frac{1}{2r} \tan \left( \frac{x}{2r} \right)=0,
\end{gather*}
the $r \to \infty$ limit of the systems (\ref{eqn:SDE_A2})--(\ref{eqn:SDE_D2}) are given as follows, for $t \in [0, \infty)$
\begin{gather}
 (\AN) \quad X^{\AN}_j(t) = u_j+W_j(t)+ \sum_{\substack{ 1 \leq k \leq N, \cr k \not=j}}\int_0^t\frac{1}{X^{\AN}_j(s)-X^{\AN}_k(s)} ds,\nonumber\\
\hphantom{(\AN) \quad X^{\AN}_j(t) =}{} j=1,2, \dots, N, \quad \text{in $\R$},\label{eqn:SDE_A3}\\
 (\CN) \quad X^{\CN}_j(t) = u_j+W_j(t)+ \int_0^t \frac{1}{X^{\CN}_j(s)} ds\nonumber\\
\hphantom{(\CN) \quad X^{\CN}_j(t) =}{} +\sum_{\substack{ 1 \leq k \leq N, \cr k \not=j}}\int_0^t \left\{
\frac{1}{X^{\CN}_j(s)-X^{\CN}_k(s)}+ \frac{1}{X^{\CN}_j(s)+X^{\CN}_k(s)}\right\} ds,\nonumber\\
\hphantom{(\CN) \quad X^{\CN}_j(t) =}{}
j=1,2, \dots, N, \quad \text{in $[0, \infty)$},\label{eqn:SDE_C3}\\
 (\DN) \quad X^{\DN}_j(t) = u_j+W_j(t)\nonumber\\
\hphantom{(\DN) \quad X^{\DN}_j(t) =}{} +\sum_{\substack{ 1 \leq k \leq N, \cr k \not=j}}
\int_0^t \left\{\frac{1}{X^{\DN}_j(s)-X^{\DN}_k(s)}+ \frac{1}{X^{\DN}_j(s)+X^{\DN}_k(s)}\right\} ds,\label{eqn:SDE_D3}\\
\hphantom{(\DN) \quad X^{\DN}_j(t) =}{} j=1,2, \dots, N, \quad \text{in $[0, \infty)$ with a ref\/lecting boundary condition at 0}.\nonumber
\end{gather}
That is, the system (\ref{eqn:SDE_A2}) is reduced to (\ref{eqn:SDE_A3}), (\ref{eqn:SDE_B2}) and (\ref{eqn:SDE_C2}) are degenerated to (\ref{eqn:SDE_C3}), and (\ref{eqn:SDE_D2}) to (\ref{eqn:SDE_D3}). The system (\ref{eqn:SDE_A3}) is the original Dyson model (Dyson's Brownian motion model with parameter $\beta=2$) \cite{Dys62,Kat16_Springer}. The SDEs (\ref{eqn:SDE_C3}) and (\ref{eqn:SDE_D3}) are known as the system of {\it noncolliding absorbing Brownian motions} (or the Dyson model of type C) and the system of {\it noncolliding reflecting Brownian motions} (or the Dyson model of type D), respectively. See \cite{BFPSW09,KT04,KT07a,KT11,KO01,LW16,TW07}.

The correlation kernel for a determinantal process is in general a function of two points on the spatio-temporal plane, say, $(s, x)$ and $(t, y)$. In the present formula (\ref{eqn:K1}) in Theorem \ref{thm:main1}, the dependence of the correlation kernel $\K^{\RN}_{\u}$ on one spatio-temporal point $(s, x)$ is explicitly given by the transition probability density $p^{[0, \pi r]}$ of a single Brownian motion in an interval $[0, \pi r]$ with given boundary conditions. The dependence of $\K^{\RN}_{\u}$ on another spatio-temporal point $(t, y)$ is described by the following two procedures. The information of interaction among particles and initial conf\/iguration is expressed by a set of entire functions $\big\{\Phi^{\RN}_{\u, u_j}(z) \big\}_{j=1}^N$, (\ref{eqn:Phi_Ra})--(\ref{eqn:Phi_Rc}), which are {\it static} functions without time-variable $t$. Then evolution in time~$t$ is given by the integral transformation (\ref{eqn:MB1}) specif\/ied by the transition probability density $p_{\rm BM}$ with time duration~$t$ of a~single Brownian motion. One of the benef\/its of such separation of static information and dynamics in the formula is that we can trace {\it a relaxation process to equilibrium}, which is a~typical non-equilibrium phenomenon, for any initial conf\/iguration in $\W_N^{(0, \pi r)}$~\cite{Kat14}. In order to demonstrate this fact, in Section~\ref{sec:relax} we will take the temporally homogeneous limit~$t_{\ast} \to \infty$ to make the systems have equilibrium processes and study the relaxation to equilibria for the trigonometric Dysom models of types C and D. Another possible benef\/it of the present formula for spatio-temporal correlation kernels is that the inf\/inite particle systems will be studied if we can control the set of entire functions~$\big\{\Phi^{\RN}_{\u, u_j}(z) \big\}_{j=1}^N$ in the inf\/inite-particle limit $N \to \infty$. For the original Dyson model~\cite{KT10} and other related systems~\cite{KT09,KT11}, we have applied the Hadamard theorem on the Weierstrass canonical product-formulas of entire functions~\cite{Lev96} to analyze the inf\/inite particle systems. In the present case with (\ref{eqn:Phi_Ra})--(\ref{eqn:Phi_Rc}), we have to treat the rations of inf\/inite products of the Jacobi theta functions in this limit $N \to \infty$. Asymptotic analysis with $N \to \infty$ will be an interesting future problem.

The paper is organized as follows. In Section \ref{sec:Det_Eq} we explain the relationship between determinantal martingales and determinantal processes used in this paper, and determinantal equalities (Lemma \ref{thm:RS}) obtained from the elliptic determinant evaluations of the Macdonald denominators given by Rosengren and Schlosser \cite{RS06}. There we explain how to derive the determinantal martingale-functions (\ref{eqn:D_Ra})--(\ref{eqn:D_Rc}), which def\/ine the interacting systems of Brownian motions by~(\ref{eqn:Plaw_R1}). Proofs of Theorems \ref{thm:main1} and \ref{thm:main2} are given in Sections~\ref{sec:proof1} and~\ref{sec:proof2}, respectively. We study the temporally homogeneous limit $t_{\ast} \to \infty$ in Section~\ref{sec:relax}, and relaxation processes to equilibria are clarif\/ied for the trigonometric Dyson models of types C and D. Concluding remarks and open problems are given in Section~\ref{sec:remarks}. Notations and formulas of the Jacobi theta functions and related functions are listed in Appendix~\ref{sec:appendixA}.

\section{Determinantal martingales and determinantal equalities} \label{sec:Det_Eq}
\subsection{Notion of martingale} \label{sec:notion_martingale}

Martingales are the stochastic processes preserving their mean values and thus they represent f\/luctuations. A typical example of martingale is the one-dimensional standard Brownian motion as explained below. Let $B(t)$, $t \geq 0$ denote the position of the standard Brownian motion in $\R$ starting from the origin 0 at time $t=0$. The transition probability density from the position $x \in \R$ to $y \in \R$ in time duration $t \geq 0$ is given by~(\ref{eqn:p_BM}). Let $\cB(\R)$ be the Borel set on $\R$. Then for an arbitrary time sequence $0 \equiv t_0 < t_1 < \cdots < t_M < \infty$, $M \in \N \equiv \{1,2, \dots \}$, and for any $A_m \in \cB(\R)$, $m=1,2, \dots, M$,
\begin{gather*}
\rP\big[B(t_m) \in A_m, \, m=1,2, \dots, M \big]\\
\qquad{} =\int_{A_1} d x^{(1)} \cdots \int_{A_M} d x^{(M)}\prod_{m=1}^M p_{\rm BM}\big(t_m-t_{m-1}, x^{(m)} | x^{(m-1)}\big)
\end{gather*}
with $x^{(0)} \equiv 0$. The collection of all paths is denoted by $\Omega$ and there is a subset $\widetilde{\Omega} \subset \Omega$ such that $\rP[\widetilde{\Omega}]=1$ and for any realization of path $\omega \in \widetilde{\Omega}$, $B(t)=B(t, \omega)$, $t \geq 0$ is a real continuous function of~$t$. In other words, $B(t)$, $t \geq 0$ has a continuous path almost surely (a.s.\ for short). For each $t \in [0, \infty)$, we write the smallest
$\sigma$-f\/ield (completely additive class of events) generated by the Brownian motion up to time $t$ as $\cF_t=\sigma(B(s)\colon 0 \leq s \leq t)$. We have a nondecreasing family $\{\cF_t \colon t \geq 0 \}$ such that $\cF_s \subset \cF_t$ for $0 \leq s < t < \infty$, which we call a {\it filtration}, and put $\cF=\bigcup_{t \geq 0} \cF_t$. The triplet $(\Omega, \cF, \rP)$ is called the probability space for the one-dimensional standard Brownian motion, and $\rP$ is especially called the Wiener measure. The expectation with respect to the probability law $\rP$ is written as $\rE$. When we see $p_{\rm BM}(t, y|x)$ as a function of $y$, it is nothing but the probability density of the normal distribution with mean~$x$ and variance~$t$, and hence it is easy to verify that
\begin{gather*}
\rE[B(t)| \cF_s] = \int_{-\infty}^{\infty} x p_{\rm BM}(t-s, x|B(s)) d x =B(s) \qquad \text{a.s.} \quad 0 \leq s < t < \infty,
\end{gather*}
which means that $B(t)$, $t \geq 0$ is a martingale. We see, however, $B(t)^n$, $t \geq 0$, $n \in \{2,3, \dots\}$ are not martingales, since the generating function of $\rE[B(t)^n | \cF_s]$, $n \in \N_0 \equiv \{0, 1, 2, \dots\}$, $0< s \leq t < \infty$, with parameter $\alpha \in \C$ is calculated as
\begin{gather*}
\sum_{n=0}^{\infty} \frac{\alpha^n}{n!} \rE[ B(t)^n | \cF_s]= \rE[ e^{\alpha B(t)} | \cF_s]
= \int_{-\infty}^{\infty} e^{\alpha x} \frac{e^{-(x-B(s))^2/2(t-s)}}{\sqrt{2 \pi (t-s)}} d x \\
\hphantom{\sum_{n=0}^{\infty} \frac{\alpha^n}{n!} \rE[ B(t)^n | \cF_s]= \rE[ e^{\alpha B(t)} | \cF_s]}{} = e^{\alpha B(s)+(t-s) \alpha^2/2}
\not= e^{\alpha B(s)} \quad \text{a.s.}
\end{gather*}
Now we assume that $\widetilde{B}(t)$, $t \geq 0$ is a one-dimensional standard Brownian motion which is independent of $B(t)$, $t \geq 0$, and its probability space is denoted by $\big(\widetilde{\Omega}, \widetilde{\cF}, \widetilde{\rP}\big)$. Then we introduce a complex-valued martingale called the {\it complex Brownian motion}
\begin{gather*}
Z(t)=B(t)+ i \widetilde{B}(t), \qquad t \geq 0,
\end{gather*}
with $i=\sqrt{-1}$. The probability space of $Z(t)$, $t \geq 0$ is given by the product space $(\Omega, \cF, \rP) \otimes (\widetilde{\Omega}, \widetilde{\cF}, \widetilde{\rP})$ and we write the expectation as $\bE=\rE \otimes \widetilde{\rE}$. For the complex Brownian motion, by the independence of its real and imaginary parts, we see that
\begin{gather*}
 \sum_{n=0}^{\infty} \frac{\alpha^n}{n!}\bE\big[ Z(t)^n | \cF_s \otimes \widetilde{\cF}_s \big]= \bE\big[e^{\alpha Z(t)} | \cF_s \otimes \widetilde{\cF}_s \big]= \rE\big[e^{\alpha B(t)} | \cF_s\big]\times \widetilde{\rE}\big[e^{i \alpha \widetilde{B}(t)} | \widetilde{\cF}_s\big] \\
\hphantom{\sum_{n=0}^{\infty} \frac{\alpha^n}{n!}\bE\big[ Z(t)^n | \cF_s \otimes \widetilde{\cF}_s \big]}{}
= e^{\alpha B(s) + (t-s) \alpha^2/2} \times e^{i \alpha \widetilde{B}(s) -(t-s) \alpha^2/2} = e^{\alpha Z(s)} = \sum_{n=0}^{\infty} \frac{\alpha^n}{n!} Z(s)^n\\
\hphantom{\sum_{n=0}^{\infty} \frac{\alpha^n}{n!}\bE\big[ Z(t)^n | \cF_s \otimes \widetilde{\cF}_s \big]=}{}
 \text{a.s.} \quad 0 \leq s < t < \infty, \quad \alpha \in \C,
\end{gather*}
which implies that for any $n \in \N_0$, $Z(t)^n$, $t \geq 0$ is a martingale. This observation will be generalized as the following stronger statement; if $F$ is an entire and non-constant function, then $F(Z(t))$, $t \geq 0$ is a time change of a complex Brownian motion (see, for instance, \cite[Theorem~2.5 in Section~V.2]{RY99}). Hence $F(Z(t))$, $t \geq 0$ is a martingale:
\begin{gather*}
\bE[ F(Z(t)) | \cF_s \otimes \widetilde{\cF}_s ] = F(Z(s)) \qquad \mbox{a.s.} \quad 0 \leq s < t < \infty.
\end{gather*}
If we take the expectation $\widetilde{\rE}$ with respect to $\Im Z(\cdot)=\widetilde{B}(\cdot)$ of the both sides of the above equality, we have
\begin{gather*}
\rE \big[ \widetilde{\rE}[F(Z(t))] \big| \cF_s \big]
=\widetilde{\rE}[F(Z(s))] \qquad \text{a.s.} \quad 0 \leq s < t < \infty.
\end{gather*}
In this way we can obtain a martingale $\widehat{F}(t, B(t)) \equiv \widetilde{\rE}[F(Z(t))]$, $t \geq 0$ with respect to the f\/iltration~$\cF_t$ of the one-dimensional Brownian motion. The present argument implies that if we have proper entire functions, then we will obtain useful martingales describing intrinsic f\/luctuations involved in interacting particle systems.

\subsection{Basic equalities and determinantal martingales} \label{sec:basic}

Let $f_j$, $j \in \I$, be an inf\/inite series of linearly independent entire functions, where the index set $\I$ is $\Z =\{\dots, -1, 0, 1, 2, \dots\}$ or~$\N$. For $N \in \{2,3 \dots\}$, we assume
\begin{gather}
\u=(u_1, u_2, \dots, u_N) \in \W_N.\label{eqn:u1}
\end{gather}
We then def\/ine a set of $N$ distinct entire and non-constant functions of $z \in \C$ by
\begin{gather}
\Phi_{\u, u_j}(z)=\frac{{\det\limits_{1 \leq \ell, m \leq N}[ f_{\ell}(u_m+(z-u_m) \delta_{mj})]}}
{{\det\limits_{1 \leq \ell, m \leq N}[ f_{\ell}(u_m) ]}},\qquad j =1,2, \dots, N.\label{eqn:Phi1}
\end{gather}
By def\/inition, it is easy to verify that
\begin{gather}
\Phi_{\u, u_j}(u_k)=\delta_{jk}, \qquad j, k \in \{1, 2, \dots, N\}. \label{eqn:interpolation1}
\end{gather}
It should be noted that given (\ref{eqn:u1}), $\Phi_{\u, u_j}(z)$, $j \in \{1,2, \dots, N\}$ satisfying (\ref{eqn:interpolation1}) can be regarded as the {\it Lagrange interpolation functions} \cite{IN16,IN17a,IN17b}. We can prove the following lemmas.

\begin{Lemma}\label{thm:det_lemma_1}Let $N \in \{2,3, \dots\}$. The functions \eqref{eqn:Phi1} have the following expansions
\begin{gather}
\Phi_{\u, u_j}(z)=\sum_{k=1}^N \phi_{\u, u_j}(k) f_k(z), \qquad j \in \{1,2, \dots, N\},\label{eqn:PhiB1}
\end{gather}
and the coefficients $\phi_{\u, u_j}(k)$, $j, k \in \{1,2, \dots, N\}$ satisfy the relations
\begin{gather}
\sum_{\ell=1}^N f_j(u_{\ell}) \phi_{\u, u_{\ell}}(k)= \frac{{\det\limits_{1 \leq m, n \leq N} [ f_{m+(j-m) \delta_{m k}}(u_n) ]}}
{{\det\limits_{1 \leq m, n \leq N} [ f_m(u_n) ]}}, \qquad j \in \Z, \qquad k \in \{1,2, \dots, N\}. \label{eqn:fPhi1}
\end{gather}
In particular, if $j$ is an element of $\{1,2, \dots, N\}$,
\begin{gather}
\sum_{\ell=1}^N f_j(u_{\ell}) \phi_{\u, u_{\ell}}(k) = \delta_{jk}, \qquad j, k \in \{1,2, \dots, N\}.\label{eqn:fPhi2}
\end{gather}
\end{Lemma}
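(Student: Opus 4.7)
The plan is to derive all three identities by two Laplace cofactor expansions of the same $N\times N$ determinant, followed by an easy specialization of indices.

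First I would observe that the matrix $[f_\ell(u_m+(z-u_m)\delta_{mj})]_{1\leq \ell,m\leq N}$ in the numerator of $\Phi_{\u,u_j}(z)$ agrees with $A\equiv[f_\ell(u_m)]_{1\leq \ell,m\leq N}$ outside its $j$-th column, which has been replaced by $(f_1(z),\dots,f_N(z))^{T}$. Expanding this determinant along column $j$ gives the expansion (\ref{eqn:PhiB1}) with the explicit coefficients
$$
\phi_{\u,u_j}(k) \;=\; \frac{(-1)^{k+j}\,M_{k,j}}{\det A}, \qquad k=1,\dots,N,
$$
where $M_{k,j}$ is the $(k,j)$-minor of $A$. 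As a byproduct, setting $z=u_k$ in that numerator either leaves $A$ unchanged (when $k=j$) or produces a matrix with two equal columns (when $k\neq j$), which recovers the Lagrange-interpolation property (\ref{eqn:interpolation1}).

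Second, to obtain (\ref{eqn:fPhi1}) I would substitute the explicit formula above into the sum on the left-hand side:
$$
\sum_{\ell=1}^N f_j(u_\ell)\,\phi_{\u,u_\ell}(k) \;=\; \frac{1}{\det A}\sum_{\ell=1}^N (-1)^{k+\ell}\,f_j(u_\ell)\,M_{k,\ell}.
$$
The key observation is that the inner sum is precisely the Laplace expansion along row $k$ of the matrix obtained from $A$ by replacing its $k$-th row $(f_k(u_1),\dots,f_k(u_N))$ with $(f_j(u_1),\dots,f_j(u_N))$, since the rows other than $k$ are unchanged and so reproduce the same minors $M_{k,\ell}$. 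That modified matrix has $(m,n)$-entry $f_{m+(j-m)\delta_{mk}}(u_n)$, which matches the numerator on the right-hand side of (\ref{eqn:fPhi1}). Note that this identification is valid for every $j\in\I$, not just $j\in\{1,\dots,N\}$, since only $f_j(u_\ell)$ is needed.

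Third, (\ref{eqn:fPhi2}) follows by specializing $j\in\{1,\dots,N\}$: the inserted row then equals the existing $j$-th row of $A$, so the modified matrix either coincides with $A$ (when $j=k$, giving ratio $1$) or has two equal rows (when $j\neq k$, giving ratio $0$). Equivalently, the cofactor formula for $\phi_{\u,u_\ell}(k)$ identifies it with the $(\ell,k)$-entry of $A^{-1}$, and (\ref{eqn:fPhi2}) is then just $AA^{-1}=I$. I do not anticipate any substantive obstacle: everything reduces to two cofactor expansions. The only real risk is bookkeeping — keeping straight which of $\ell,m$ is the row index and which the column, and tracking the sign $(-1)^{k+j}$. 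I would therefore verify the $N=2$ case by hand as a sanity check before quoting the general formula.
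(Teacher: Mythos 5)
Your proposal is correct and follows essentially the same route as the paper's own proof: a cofactor expansion of the numerator determinant along the $j$-th column to get $\phi_{\u,u_j}(k)=(-1)^{j+k}|\sff_{\u}(k,j)|/|\sff_{\u}|$, followed by recognizing $\sum_{\ell}(-1)^{k+\ell}f_j(u_\ell)|\sff_{\u}(k,\ell)|$ as the row-$k$ Laplace expansion of the row-replaced matrix. Your sign conventions and the identification of the modified matrix's entries as $f_{m+(j-m)\delta_{mk}}(u_n)$ both check out.
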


\begin{proof}Consider an $N \times N$ matrix
\begin{gather*}
\sff_{\u}=(f_{\ell}(u_{m}))_{1 \leq \ell, m \leq N},
\end{gather*}
and write its determinant as $|\sff_{\u}| \not=0$. The minor determinant $|\sff_{\u}(k, j)|$ is def\/ined as the determinant of the $(N-1)\times(N-1)$ matrix, $\sff_{\u}(k, j)$, which is obtained from $\sff_{\u}$ by deleting the $k$-th row and the $j$-th column, $k, j \in \{1,2, \dots, N\}$. Then the determinant in the numerator of (\ref{eqn:Phi1}) is expanded along the $j$-th column and we obtain~(\ref{eqn:PhiB1}) with
\begin{gather}
\phi_{\u, u_j}(k)=(-1)^{j+k} \frac{|\sff_{\u}(k, j)|}{|\sff_{\u}|}.\label{eqn:phi1}
\end{gather}
Hence
\begin{gather*}
\sum_{\ell=1}^N f_j(u_{\ell}) \phi_{\u, u_{\ell}}(k)=\frac{1}{|\sff_{\u}|} \sum_{\ell=1}^N (-1)^{\ell+k} |\sff_{\u}(k, \ell)| f_j(u_{\ell}),
\end{gather*}
which proves (\ref{eqn:fPhi1}), for the summation in the r.h.s.\ is the expansion of $\det\limits_{1 \leq m,n \leq N}[f_{m+(j-m)\delta_{mk}}(u_n)]$ along the $k$-th row. It is immediate to conclude (\ref{eqn:fPhi2}) from (\ref{eqn:fPhi1}), if both of $j$ and $k$ are elements of $\{1,2, \dots, N\}$.
\end{proof}

Then we obtain the following equalities.

\begin{Lemma}\label{thm:det_lemma_2} \quad
\begin{enumerate}\itemsep=0pt
\item[$(i)$] For $z \in \C$,
\begin{gather}
\sum_{\ell=1}^N f_j(u_{\ell}) \Phi_{\u, u_{\ell}}(z) =f_j(z), \qquad j \in \{1,2, \dots, N\}. \label{eqn:fPhi3}
\end{gather}
\item[$(ii)$] For $\z=(z_1, z_2, \dots, z_N) \in \C^N$, the equality
\begin{gather}
\det_{1 \leq j, k \leq N} [ \Phi_{\u, u_j}(z_k) ]=\frac{{\det\limits_{1 \leq j, k \leq N} [ f_j(z_k) ]}}
{{\det\limits_{1 \leq j, k \leq N} [ f_j(u_k) ]}}, \qquad \z=(z_1, z_2, \dots, z_N) \in \C^N \label{eqn:Phi1c}
\end{gather}
holds.
\end{enumerate}
\end{Lemma}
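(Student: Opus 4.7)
The plan is to derive both statements directly from the expansion
$\Phi_{\u, u_j}(z) = \sum_{k=1}^N \phi_{\u, u_j}(k) f_k(z)$
established in Lemma \ref{thm:det_lemma_1}, together with the orthogonality relation (\ref{eqn:fPhi2}) and the cofactor formula (\ref{eqn:phi1}). Neither part requires any analytic input beyond the entire property of $f_j$ already assumed; both reduce to finite-dimensional linear algebra.

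For part (i), I would substitute the expansion (\ref{eqn:PhiB1}) of $\Phi_{\u, u_\ell}$ into the left-hand side of (\ref{eqn:fPhi3}) and interchange the two finite sums, obtaining
\begin{gather*}
\sum_{\ell=1}^N f_j(u_\ell) \Phi_{\u, u_\ell}(z) = \sum_{k=1}^N f_k(z) \sum_{\ell=1}^N f_j(u_\ell) \phi_{\u, u_\ell}(k).
\end{gather*}
Because $j \in \{1, \dots, N\}$, the inner sum collapses by (\ref{eqn:fPhi2}) to $\delta_{jk}$, leaving only the $k=j$ term $f_j(z)$. So (i) is essentially a one-line consequence of Lemma \ref{thm:det_lemma_1}.

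For part (ii), the same expansion rewrites the matrix $(\Phi_{\u, u_j}(z_k))_{1 \leq j,k \leq N}$ as the product $\Phi \cdot F$, where $\Phi_{jm} := \phi_{\u, u_j}(m)$ and $F_{mk} := f_m(z_k)$; this is simply the scalar identity $\Phi_{\u, u_j}(z_k) = \sum_m \phi_{\u, u_j}(m) f_m(z_k)$ read as a matrix product. From the explicit formula (\ref{eqn:phi1}) we have $\Phi_{jm} = (-1)^{j+m} |\sff_\u(m,j)|/|\sff_\u|$, which one recognizes as the $(j,m)$-entry of $\sff_\u^{-1}$ (matching minor indices under the usual adjugate-transpose convention). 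Hence $\det \Phi = 1/|\sff_\u| = 1/\det_{1 \leq j,k \leq N}[f_j(u_k)]$, and taking determinants in the factorization $\Phi \cdot F$ yields (\ref{eqn:Phi1c}).

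There is no serious obstacle; the only point demanding care is the identification $\Phi = \sff_\u^{-1}$, since one must track which index labels the deleted row and which labels the deleted column in the minor $|\sff_\u(m,j)|$ to confirm that $\Phi_{jm}$ matches the cofactor convention used in $\sff_\u^{-1}$. Once the index-matching is verified, both (\ref{eqn:fPhi3}) and (\ref{eqn:Phi1c}) follow without further work. An independent check of (\ref{eqn:Phi1c}) is provided by setting $z_k = u_k$, in which case the left-hand side reduces to $\det[\delta_{jk}] = 1$ by (\ref{eqn:interpolation1}) and the right-hand side is obviously $1$ as well; this sanity check can be used to confirm the sign and minor-index bookkeeping.
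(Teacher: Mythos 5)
Your proposal is correct and follows essentially the same route as the paper: part (i) is the double sum combining (\ref{eqn:PhiB1}) with (\ref{eqn:fPhi2}), and part (ii) is the determinant of the matrix factorization already encoded in Lemma \ref{thm:det_lemma_1}. The only cosmetic difference is that in (ii) the paper takes determinants in the identity $\sff_{\z}=\sff_{\u}\cdot(\Phi_{\u,u_j}(z_k))_{j,k}$ obtained by evaluating (i) at $z=z_k$, whereas you write $(\Phi_{\u,u_j}(z_k))_{j,k}=\sff_{\u}^{-1}\sff_{\z}$; the adjugate index bookkeeping you flag is actually unnecessary, since (\ref{eqn:fPhi2}) already asserts $\sff_{\u}\Phi=I$ in matrix form.
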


\begin{proof}(i) Multiply the both sides of (\ref{eqn:fPhi2}) by $f_k(z)$ and take summation over $k \in \{1,2, \dots, N\}$. Then use~(\ref{eqn:PhiB1}) to obtain (\ref{eqn:fPhi3}). (ii) Consider $N$ equations by putting $z=z_j$, $j \in \{1,2, \dots, N\}$ in~(\ref{eqn:fPhi3}). Then calculate the determinant $|\sff_{\z}|$ to prove the determinantal equality~(\ref{eqn:Phi1c}).
\end{proof}

We consider $N$ pairs of independent copies $(B_k(t), \widetilde{B}_k(t))$, $k=1,2, \dots, N$, of $(B(t), \widetilde{B}(t))$, $t \geq 0$, and def\/ine~$N$ independent complex Brownian motions
\begin{gather}
Z_k(t)=u_k+B_k(t)+ i \widetilde{B}_k(t), \qquad t \geq 0, \qquad k=1,2, \dots, N,\label{eqn:Z1}
\end{gather}
each of which starts from $u_k \in \R$. The probability law and expectation of them are given by $\rP_{\u} \otimes \widetilde{\rP}$ and $\rE_{\u} \otimes \widetilde{\rE}$, respectively. Then for each complex Brownian motion $Z_k(t)$, $t \geq 0$, $k=1,2, \dots, N$, we have $N$ distinct time-changes of complex Brownian motions, $\Phi_{\u, u_j}(Z_k(t))$, $t \geq 0$, $j=1,2, \dots, N$, started from the real values, 0 or 1; $\Phi_{\u, u_j}(Z_k(0))=\Phi_{\u, u_j}(u_k)=\delta_{jk}$, $j, k=1,2, \dots, N$. Therefore, we can conclude that, if we take expectation $\widetilde{\rE}$, we will obtain $N$ distinct martingales
\begin{gather*}
M_{\u, u_j}(t, B_k(t))\equiv \widetilde{\rE}\big[ \Phi_{\u, u_j}(B_k(t)+i \widetilde{B}_k(t))\big]\nonumber\\
\hphantom{M_{\u, u_j}(t, B_k(t))}{}
= \frac{{\det\limits_{1 \leq \ell, m \leq N}\big[\widehat{f}_{\ell}(t \delta_{m j}, u_m+(B_k(t)-u_m) \delta_{m j})\big]}}
{{\det\limits_{1 \leq \ell, m \leq N}\big[\widehat{f}_{\ell}(0, u_m)\big]}},\qquad t \geq 0, \quad j=1,2, \dots, N,\nonumber
\end{gather*}
for each one-dimensional Brownian motion $B_k(t)$, $t \geq 0$, $k=1,2, \dots, N$, where
\begin{gather*}
\widehat{f}_{\ell}(t, x) \equiv\widetilde{\rE}\big[ f_{\ell}(x+i \widetilde{B}(t))\big]= \int_{-\infty}^{\infty} f_{\ell}(x+ i \widetilde{x}) p_{\rm BM}(t, \widetilde{x}|0) d \widetilde{x}, \qquad \ell \in \I.
\end{gather*}
By this def\/inition, $\widehat{f}_{\ell}(0, x)=f_{\ell}(x)$, $\ell \in \I$. Then we def\/ine a multivariate function of $t \geq 0$ and $\x=(x_1, x_2, \dots, x_N) \in \R^N$ by
\begin{gather}
D_{\u}(t, \x)=\det_{1 \leq j, k \leq N} [ M_{\u, u_j}(t, x_k)], \label{eqn:D1}
\end{gather}
which gives a martingale as a functional of $t \geq 0$ and $\B(t)=(B_1(t), B_2(t), \dots, B_N(t))$, $t \geq 0$,
\begin{gather*}
\rE_{\u} [ D_{\u}(t, \B(t)) | \cF_s] =D_{\u}(s, \B(s)), \qquad 0 \leq s < t < \infty.
\end{gather*}
By the multi-linearity of determinant and (\ref{eqn:Phi1c}), we see
\begin{gather}
D_{\u}(t, \x) = \widetilde{\rE}\Big[ \det_{1 \leq j, k \leq N}\big[ \Phi_{\u, u_j}(x_k+i \widetilde{B}_k(t))\big] \Big]
= \frac{{\det\limits_{1 \leq j, k \leq N} \big[ \widehat{f}_j(t, x_k) \big]}}
{{\det\limits_{1 \leq j, k \leq N} \big[ \widehat{f}_j(0, u_k)\big]}}. \label{eqn:D3}
\end{gather}
We call $D_{\u}(t, \B(t))$, $t \geq 0$, the {\it determinantal martingale} \cite{Kat14}.

\subsection{Auxiliary measure and spatio-temporal Fredholm determinant} \label{sec:auxiliary}

Now we introduce an auxiliary measure $\widehat{\P}_{\u}$, which is complex-valued in general, but is absolutely continuous to the Wiener measure $\rP_{\u}$ as
\begin{gather}
\widehat{\P}_{\u} \big|_{\cF_t} = D_{\u}(t, \B(t)) \rP_{\u} \big|_{\cF_t}, \qquad t \geq 0.\label{eqn:Phat1}
\end{gather}
(Note that this is generally dif\/ferent from $\P^{\RN}_{\u}$ given by (\ref{eqn:Plaw_R1}), since the condition ${\bf 1}(T_{\rm collision}>t)$ is omitted, and hence it is {\it not} the measure representing any noncolliding particles.) Here we consider the corresponding auxiliary system of~$N$ particles on $\R$ governed by $\widehat{\P}_{\u}$
\begin{gather*}
\widehat{\X}(t)=\big(\widehat{X}_1(t), \dots, \widehat{X}_N(t)\big), \qquad t \geq 0,
\end{gather*}
starting from (\ref{eqn:u1}) and each particle of which has a continuous path a.s. We consider the unlabeled conf\/iguration of $\widehat{\X}(t)$ as
\begin{gather}
\widehat{\Xi}(t, \cdot)=\sum_{j=1}^N \delta_{\widehat{X}_j(t)}(\cdot), \qquad t \geq 0. \label{eqn:hatX}
\end{gather}
Consider an arbitrary number $M \in \N$ and an arbitrary set of strictly increasing times $\t=\{t_1, t_2, \dots, t_M\}$, $0 \equiv t_0 < t_1 < \cdots < t_M < \infty$. Let $\rC_{\rc}(\R)$ be the set of all continuous real-valued functions with compact supports on~$\R$. For $\mbg=(g_{t_1}, g_{t_1}, \dots, g_{t_M}) \in \rC_{\rc}(\R)^M$, we consider the following functional of $\mbg$
\begin{gather}
\widehat{\L}_{\u}[\t, \mbg]= \widehat{\E}_{\u} \left[ \exp \left\{ \sum_{m=1}^{M}\int_{\R} g_{t_m}(x) \widehat{\Xi}(t_m, dx) \right\} \right], \label{eqn:GF1}
\end{gather}
which is the Laplace transform of the multi-time distribution function $\widehat{\P}_{\u}$ on a set of times $\t$ with the functions $\mbg$. If we put
\begin{gather*}
\chi_t=e^{g_t}-1,
\end{gather*}
then (\ref{eqn:GF1}) can be written as
\begin{gather}
\widehat{\L}_{\u}[\t, \mbg]=\widehat{\E}_{\u} \left[\prod_{m=1}^M \prod_{j=1}^N \big\{1+ \chi_{t_m}\big(\widehat{X}_j(t_m)\big)\big\}\right].\label{eqn:GF2}
\end{gather}
Explicit expression of (\ref{eqn:GF2}) is given by the following multiple integrals
\begin{gather*}
\widehat{\L}_{\u}[\t, \mbg]=\int_{\R^N} d \x^{(1)} \cdots \int_{\R^N} d \x^{(M)} D_{\u}\big(t_M, \x^{(M)}\big) \\
\hphantom{\widehat{\L}_{\u}[\t, \mbg]=}{} \times \prod_{m=1}^M \prod_{j=1}^N \big[ p_{\rm BM}\big(t_m-t_{m-1}, x^{(m)}_j|x^{(m-1)}_j\big)
\big\{ 1+ \chi_{t_m}\big(x^{(m)}_j\big) \big\} \big],
\end{gather*}
where $x^{(0)}_j=u_j$, $j=1,2, \dots, N$, $\x^{(m)}=\big(x^{(m)}_1,\dots,x^{(m)}_N\big)$, and $d \x^{(m)}=\prod\limits_{j=1}^N dx^{(m)}_j$, $m=1,2,\dots,M$.

The following was proved as Theorem 1.3 in~\cite{Kat14}.

\begin{Proposition}\label{thm:Fredholm}Put
\begin{gather*}
\widehat{\K}_{\u}(s, x; t, y)=\sum_{j=1}^N p_{\rm BM}(s, x|u_j) M_{\u, u_j}(t, y)- {\bf 1}(s>t) p_{\rm BM}(s-t, x|y),
\end{gather*}
$s, t > 0$, $x, y \in \R$. Then the following equality holds for an arbitrary number $M \in \N$, an arbitrary set of strictly increasing times $\t=\{t_1, t_2, \dots, t_M\}$, $0 \equiv t_0 < t_1 < \cdots < t_M < \infty$, and $\mbg=(g_{t_1}, g_{t_1}, \dots, g_{t_M}) \in \rC_{\rc}(\R)^M$,
\begin{gather*}
\widehat{\L}_{\u}[\t, \mbg]=\mathop{{\rm Det}}_{\substack{s, t \in \t, \\ x, y \in \R}}
\big[\delta_{st} \delta(x-y)+ \widehat{\K}_{\u}(s, x; t, y) \chi_{t}(y) \big],
\end{gather*}
where the r.h.s.\ denotes the spatio-temporal Fredholm determinant with the kernel $\widehat{\K}_{\u}$ defined by
\begin{gather}
 \mathop{{\rm Det}}_{\substack{s, t \in \t, \\ x, y \in \R}}
\big[\delta_{st} \delta(x-y)+ \widehat{\K}_{\u}(s, x; t, y) \chi_{t}(y) \big]\label{eqn:Fredholm}\\
\equiv \sum_{\substack{0 \leq N_m \leq N, \\ 1 \leq m \leq M} }
\int_{\prod\limits_{m=1}^{M} \W_{N_{m}}}\prod_{m=1}^{M}d \x_{N_m}^{(m)}\prod_{j=1}^{N_{m}}\chi_{t_m} \big(x_{j}^{(m)}\big)
\det_{\substack{1 \leq j \leq N_{m},\, 1 \leq k \leq N_{n}, \\
1 \leq m, n \leq M}}\big[\widehat{\K}_{\u}\big(t_m, x_{j}^{(m)}; t_n, x_{k}^{(n)}\big)\big],\nonumber
\end{gather}
where $d \x^{(m)}_{N_m}= \prod\limits_{j=1}^{N_m} dx^{(m)}_j$, $m=1,2, \dots, M$, and the term with $N_m=0$, $1 \leq \forall\, m \leq M$ in the r.h.s.\ should be interpreted as~$1$.
\end{Proposition}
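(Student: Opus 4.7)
The plan is to start from the explicit multi-integral representation of $\widehat{\L}_{\u}[\t, \mbg]$ displayed just before the statement, insert the determinantal form \eqref{eqn:D3} for $D_{\u}(t_M, \x^{(M)})$, and reduce the resulting expression to the Fredholm determinant \eqref{eqn:Fredholm} by combining three classical ingredients: the Karlin--McGregor / Chapman--Kolmogorov contraction for Brownian transition densities, the Andr\'eief (Cauchy--Binet) identity, and the Eynard--Mehta-type collapse for multi-time biorthogonal ensembles.

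First, I would substitute $D_{\u}(t_M, \x^{(M)})=\det_{1\le j,k\le N}[\widehat f_j(t_M,x^{(M)}_k)]/\det_{1\le j,k\le N}[\widehat f_j(0,u_k)]$ into the integral expression for $\widehat{\L}_{\u}[\t,\mbg]$ and expand the product $\prod_{m=1}^M\prod_{j=1}^N(1+\chi_{t_m}(x^{(m)}_j))$ as a sum over subsets $S\subset\{1,\dots,M\}\times\{1,\dots,N\}$ of monomials in $\chi_{t_m}$. For each fixed $S$, the integrations over the ``unmarked'' space variables (those $(m,j)\notin S$) can be performed using the semigroup law $\int p_{\rm BM}(s,z|y)p_{\rm BM}(t,y|x)\,dy=p_{\rm BM}(s+t,z|x)$. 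This contracts neighbouring time-slices and leaves a weighted product of Brownian transition densities connecting only the marked positions together with the two endpoints: the initial $u_k$'s and the final integration variables that feed into the determinant $\det[\widehat f_j(t_M,\cdot)]$.

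Second, the point of Lemma \ref{thm:det_lemma_2}(ii) together with the expansion \eqref{eqn:PhiB1} is that the terminal determinant can be re-expressed, via Andr\'eief's identity applied along the chain of Markov transitions, as a determinant whose $(j,k)$-entry is exactly $M_{\u,u_j}(t, y)$ convolved against whatever tail of Brownian propagators remains. Using the martingale property derived in Section~\ref{sec:basic} one checks that $\int p_{\rm BM}(t-s,y|x)M_{\u,u_j}(s,x)\,dx=M_{\u,u_j}(t,y)$, so these convolutions collapse to values of $M_{\u,u_j}$ at the marked times. The denominator $\det[\widehat f_j(0,u_k)]$ cancels against the determinant of initial values produced by Andr\'eief. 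After this reorganization each summand over $S$ takes the form of a single determinant whose entries are values of $p_{\rm BM}(s,x|u_j)M_{\u,u_j}(t,y)$ or of $-p_{\rm BM}(s-t,x|y)$ when $s>t$.

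Third, one matches the resulting sum over $S$ with the Fredholm determinant expansion \eqref{eqn:Fredholm}: the contribution of the diagonal terms (coming from free Brownian transitions between two marked times at the same particle index) reproduces the subtracted piece $-\mathbf{1}(s>t)p_{\rm BM}(s-t,x|y)$, while the contribution routed through the endpoints produces the rank-$N$ kernel $\sum_{j=1}^N p_{\rm BM}(s,x|u_j)M_{\u,u_j}(t,y)$. The combined kernel is $\widehat\K_{\u}$. The main obstacle, in my view, will not be any single identity but the careful combinatorial bookkeeping needed to show that the sum over $S$, regrouped by the block structure of marked times and space points, matches \eqref{eqn:Fredholm} term-by-term. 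In particular one has to verify that the signs produced by the Andr\'eief expansion exactly cancel the signs of the Laplace-style alternation $(e^{g_t}-1)$ so that a single clean kernel $\widehat\K_{\u}$ emerges; this is really the content of the Eynard--Mehta theorem adapted to the biorthogonal structure coming from the martingale functions $\{\Phi_{\u,u_j}\}$, and must be done with care because $D_{\u}$ is a signed (and in the elliptic setting complex-valued) density rather than a probability.
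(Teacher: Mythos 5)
First, note that the paper does not actually prove this proposition: it is imported verbatim from \cite[Theorem~1.3]{Kat14}, so your proposal has to be measured against the determinantal-martingale method sketched in Sections~\ref{sec:notion_martingale} and~\ref{sec:basic} rather than against an argument printed here. Your overall skeleton --- expand $\prod_{m}\prod_{j}(1+\chi_{t_m})$, integrate out the unmarked time slices, collapse the terminal determinant $D_{\u}(t_M,\x^{(M)})$ onto the marked points, and resum into the Fredholm expansion --- is the right one and is essentially the route of \cite{Kat14}.

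There is, however, a concrete error at the load-bearing step. You assert $\int_{\R} p_{\rm BM}(t-s,y|x)\,M_{\u,u_j}(s,x)\,dx=M_{\u,u_j}(t,y)$, i.e., you propagate $M_{\u,u_j}$ \emph{forward} by integrating over the \emph{earlier} variable. This is false: for $\Phi(z)=z^2$ one has $M(t,x)=\widetilde{\rE}\big[(x+i\widetilde{B}(t))^2\big]=x^2-t$, while the left-hand side equals $y^2+(t-s)-s=y^2+t-2s$, which differs from $y^2-t$ unless $s=t$. The martingale property supplies the \emph{reverse} identity, $\int_{\R} M_{\u,u_j}(t,y)\,p_{\rm BM}(t-s,y|x)\,dy=M_{\u,u_j}(s,x)$ for $s<t$, and that is what the argument needs: for each particle one integrates out the unmarked slices \emph{after} its last marked time, pulling $M_{\u,u_j}(t_M,\cdot)$ back to that marked time. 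A second, structural point: the appeal to Andr\'eief/Eynard--Mehta is not apt here, because the weight between consecutive time slices is the \emph{product} $\prod_{j} p_{\rm BM}(t_m-t_{m-1},x^{(m)}_j|x^{(m-1)}_j)$ over independent labelled particles, not a Karlin--McGregor determinant, and the only determinant in sight is the single terminal one; the reduction proceeds by expanding $D_{\u}(t_M,\x^{(M)})=\sum_{\sigma}{\rm sgn}(\sigma)\prod_{k}M_{\u,u_{\sigma(k)}}(t_M,x^{(M)}_k)$ so that the $N$-particle expectation factorizes into one-dimensional ones, not by Cauchy--Binet contractions of pairs of determinants. Finally, the combinatorial resummation of the resulting sum over marked subsets and permutations into the block determinant \eqref{eqn:Fredholm} --- which you correctly single out as the main difficulty --- is precisely the nontrivial content of \cite[Theorem~1.3]{Kat14} and is left unproved in your sketch. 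As written, the proposal is a plan resting on one false identity and one deferred lemma rather than a proof.
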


This proposition is general and it proves that the auxiliary system (\ref{eqn:hatX}) is determinantal. The measure (\ref{eqn:Phat1}) which governs this particle system
is, however, complex-valued in general, and hence the system is unphysical. The problem is to clarify the proper conditions which should be added to (\ref{eqn:Phat1}) to construct a non-negative-def\/inite real measure, i.e., the probability measure, which def\/ines a physical system of interacting particles. As a matter of course, this problem depends on the choice of an inf\/inite set of linearly independent entire functions~$f_j$, $j \in \I$.

\subsection{Elliptic determinant evaluations of the Macdonald denominators} \label{sec:Macdonald}

Here we report the results when we choose the entire functions $f_j$, $j \in \Z$ as follows
\begin{gather}
f^{\AN}_j(z; \tau) = e^{i J^{\AN}(j) z/r} \vartheta_1 \left(
J^{\AN}(j) \tau + \frac{\cN^{\AN} z}{2 \pi r}+\frac{1-(-1)^N}{4}; \cN^{\AN} \tau \right),\nonumber\\
f^{\RN}_j(z; \tau) = e^{i J^{\RN}(j) z/r} \vartheta_1 \left(J^{\RN}(j) \tau + \frac{\cN^{\RN} z}{2 \pi r}; \cN^{\RN} \tau \right)\nonumber\\
\hphantom{f^{\RN}_j(z; \tau) =}{} -
e^{-i J^{\RN}(j) z/r} \vartheta_1 \left(J^{\RN}(j) \tau - \frac{\cN^{\RN} z}{2 \pi r}; \cN^{\RN} \tau \right)\qquad\text{for $\RN=\BN, \BNv$},\nonumber\\
f^{\RN}_j(z; \tau) = e^{i J^{\RN}(j) z/r} \vartheta_1 \left(J^{\RN}(j) \tau + \frac{\cN^{\RN} z}{2 \pi r} + \frac{1}{2} ; \cN^{\RN} \tau \right)\nonumber\\
\hphantom{f^{\RN}_j(z; \tau) =}{} -
e^{-i J^{\RN}(j) z/r} \vartheta_1 \left(
J^{\RN}(j) \tau - \frac{\cN^{\RN} z}{2 \pi r} + \frac{1}{2} ; \cN^{\RN} \tau \right)\qquad \text{for $\RN=\CN, \CNv, \BCN$},\nonumber\\
f^{\DN}_j(z; \tau) = e^{i J^{\DN}(j) z/r} \vartheta_1 \left(J^{\DN}(j) \tau + \frac{\cN^{\DN}z}{2 \pi r}+\frac{1}{2}; \cN^{\DN} \tau \right)\nonumber\\
\hphantom{f^{\DN}_j(z; \tau) =}{} +
e^{-i J^{\DN}(j) z/r} \vartheta_1 \left(J^{\DN}(j) \tau - \frac{\cN^{\DN}z}{2 \pi r} +\frac{1}{2}; \cN^{\DN} \tau \right),\label{eqn:f}
\end{gather}
$z \in \C$, $j \in \Z$, with $N \in \N$, $0 < r < \infty$, and $\tau \in \C$ with $0 < \Im \tau < \infty$,
where
\begin{gather}
J^{\RN}(j) = \begin{cases}
j-1, & \RN=\AN, \BN, \BNv, \DN,\\
j, & \RN=\CN, \BCN,\\
j-1/2,& \RN = \CNv,
\end{cases} \label{eqn:J_R}
\end{gather}
and $\cN^{\RN}$ are given by (\ref{eqn:cN_R}). These functions (\ref{eqn:f}) were used to express the determinant evaluations by Rosengren and Schlosser~\cite{RS06} for the Macdonald denominators $W_{\RN}(x)$ for seven families of irreducible reduced af\/f\/ine root systems $\RN=\AN, \BN, \BNv, \CN, \CNv, \BCN, \DN$. Note that, if $x \in \R$, $\tau \in i \R$, and $0 < \Im \tau < \infty$, then $f^{\RN}(x; \tau) \in \R$ for $\RN=\BN, \BNv, \DN$ and $f^{\RN}(x; \tau) \in i \R$ for $\RN=\CN, \CNv, \BCN$.

Assume that, for $\u=(u_1, u_2, \dots, u_N) \in \W_N$, $\det\limits_{1 \leq j, k \leq N}[f_j(u_k)]$ is factorized in the form
\begin{gather}
\det_{1 \leq j, k \leq N} [f_j(u_k)]= k_0 k_{\rm sym}(\u) \prod_{\ell=1}^N k_1(u_{\ell})\prod_{1 \leq j < k \leq N} k_2(u_k, u_j),\label{eqn:factor1}
\end{gather}
where $k_0$ is a constant, $k_1$ is a single variable function, $k_2$ is an antisymmetric function of two variables, $k_2(u, v)=-k_2(v, u)$, and $k_{\rm sym}(\u)$ is a symmetric function of $\u$ which cannot be factorized as $\prod\limits_{\ell=1}^N k_1(u_{\ell})$. For $\u=(u_1, \dots, u_{j-1}, u_j, u_{j+1}, \dots, u_N) \in \W_N$, we replace the $j$-th component $u_j$ by a variable $z$ and write the obtained vector as $\u^{(j)}(z)=(u_1, \dots, u_{j-1}, z, u_{j+1}, \dots, u_N)$. Under the assumption (\ref{eqn:factor1}), $\Phi_{\u, u_j}(z)$ def\/ined by (\ref{eqn:Phi1}) is also factorized as
\begin{gather}
\Phi_{\u, u_j}(z)= \frac{k_{\rm sym}( \u^{(j)}(z))}{k_{\rm sym}(\u)}\frac{k_1(z)}{k_1(u_j)}
\prod_{\substack{1 \leq \ell \leq N, \cr \ell \not=j}}\frac{k_2(z, u_{\ell})}{k_2(u_j, u_{\ell})},\qquad j=1,2, \dots, N.\label{eqn:PhiB2}
\end{gather}
Then the determinantal equality (\ref{eqn:Phi1c}) becomes
\begin{gather}
\det_{1 \leq j, k \leq N}\left[ \frac{k_{\rm sym}(\u^{(j)}(z_k))}{k_{\rm sym}(\u)}\frac{k_1(z_k)}{k_1(u_j)}
\prod_{\substack{1 \leq \ell \leq N, \cr \ell \not=j}}\frac{k_2(z_k, u_{\ell})}{k_2(u_j, u_{\ell})} \right]\nonumber\\
\qquad{} = \frac{k_{\rm sym}(\z)}{k_{\rm sym}(\u)}\prod_{\ell=1}^N \frac{k_1(z_{\ell})}{k_1(u_{\ell})}\prod_{1 \leq j < k \leq N}\frac{k_2(z_k, z_j)}{k_2(u_k, u_j)},\qquad \z \in \C^N, \qquad \u \in \W_N.\label{eqn:PhiB3}
\end{gather}

In \cite{RS06} Rosengren and Schlosser gave the elliptic determinant evaluations of the Macdonald denominators of seven families of irreducible reduced af\/f\/ine root systems, $\AN$, $\BN$, $\BNv$, $\CN$, $\CNv$, $\BCN$ and $\DN$. From their determinantal equalities \cite[Proposition~6.1]{RS06}, it is easy to verify that our seven choices of $f_j$, $j \in \Z$ given by~(\ref{eqn:f}) allow the factorization~(\ref{eqn:factor1}) for their determinants. As functions of $\tau$, we write
\begin{gather}
q(\tau)=e^{\tau \pi i}, \qquad q_0(\tau)= \prod_{n=1}^{\infty} \big(1-q(\tau)^{2n} \big).\label{eqn:q_q0}
\end{gather}

\begin{Lemma}\label{thm:RS}For the seven choices of $f^{\RN}_j(\cdot; \tau)$, $j \in \Z$, $\Im \tau > 0$, given by \eqref{eqn:f}, the equality
\begin{gather}
\det_{1 \leq j, k \leq N} \big[f^{\RN}_j(u_k; \tau)\big]= k^{\RN}_0(\tau) k^{\RN}_{\rm sym}(\u; \tau) \prod_{\ell=1}^N k^{\RN}_1(u_{\ell}; \tau)
\prod_{1 \leq j < k \leq N} k^{\RN}_2(u_k, u_j; \tau),\label{eqn:factor1b}
\end{gather}
holds with the following factors,
\begin{gather}
k_0^{\AN}(\tau) = i^{-(N-1)(3 N+1-(-1)^N)/2} q(\tau)^{-(N-1)(3N-2)/8} q_0(\tau)^{-(N-1)(N-2)/2},\nonumber\\
k_0^{\BN}(\tau) = 2 q(\tau)^{-N(N-1)/4} q_0(\tau)^{-N(N-1)},\nonumber\\
k_0^{\BNv}(\tau) = 2 q(\tau)^{-N(N-1)/4} q_0(\tau)^{-(N-1)^2} q_0(2 \tau)^{-(N-1)},\nonumber\\
k_0^{\CN}(\tau) = i^{-N} q(\tau)^{-N^2/4} q_0(\tau)^{-N(N-1)},\nonumber\\
k_0^{\CNv}(\tau) = i^{-N} q(\tau)^{-N(2N-1)/8} q_0(\tau)^{-(N-1)^2} q_0(\tau/2)^{-(N-1)},\nonumber\\
k_0^{\BCN}(\tau) = i^{-N} q(\tau)^{-N(N+1)/4} q_0(\tau)^{-N(N-1)} q_0(2 \tau)^{-N},\nonumber\\
k_0^{\DN}(\tau) = 4 q(\tau)^{-N(N-1)/4} q_0(\tau)^{-N(N-2)},\label{eqn:k0}\\
k_{\rm sym}^{\AN}(\u; \tau)=\vartheta_1 \left( \frac{\sum\limits_{j=1}^N u_j -\kappa_N}{2 \pi r}; \tau \right),\nonumber\\
k_{\rm sym}^{\RN}(\u; \tau)=1 \quad \mbox{for $\RN =\BN, \BNv, \CN, \CNv, \BCN, \DN$},\nonumber\\
k_1^{\AN}(u; \tau)=k_1^{\DN}(u; \tau)=1,\nonumber\\
k_1^{\BN}(u; \tau)=\vartheta_1 \left( \frac{u}{2 \pi r}; \tau \right),\qquad k_1^{\BNv}(u; \tau)=\vartheta_1 \left( \frac{u}{\pi r}; 2 \tau \right),
\nonumber\\
k_1^{\CN}(u; \tau)=\vartheta_1 \left( \frac{u}{\pi r}; \tau \right), \qquad k_1^{\CNv}(u; \tau)=\vartheta_1 \left( \frac{u}{2 \pi r}; \frac{\tau}{2} \right),
\nonumber\\
k_1^{\BCN}(u; \tau)=\vartheta_1 \left( \frac{u}{2 \pi r}; \tau \right)\vartheta_0 \left( \frac{u}{\pi r}; 2 \tau \right),\nonumber\\
k_2^{\AN}(u, v; \tau)=\vartheta_1 \left( \frac{u-v}{2\pi r}; \tau \right),\nonumber\\
k_2^{\RN}(u, v; \tau)=\vartheta_1 \left( \frac{u-v}{2\pi r}; \tau \right)\vartheta_1 \left( \frac{u+v}{2\pi r}; \tau \right)
\quad \text{for $\RN =\BN, \BNv, \CN, \CNv, \BCN, \DN$}.\label{eqn:factor2}
\end{gather}
\end{Lemma}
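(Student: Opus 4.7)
The plan is to reduce Lemma~\ref{thm:RS} directly to Proposition~6.1 of Rosengren--Schlosser~\cite{RS06}. For each of the seven irreducible reduced affine root systems, that proposition evaluates the determinant of an $N\times N$ matrix whose entries coincide, up to a rescaling of the spatial variable by $2\pi r$ and a matching of the modular parameter (by~(\ref{eqn:cN_R}), ours is $\cN^{\RN}\tau$ while in~(\ref{eqn:f}) the outer factor carries $\tau$), with the functions displayed in~(\ref{eqn:f}). The first step is therefore a notational dictionary: identify their $x_k$ with our $u_k/2\pi r$, match the integer/half-integer offsets $J^{\RN}(j)$ of~(\ref{eqn:J_R}) with the ones in Rosengren--Schlosser, and check that the $\vartheta_1$-characteristics appearing in~(\ref{eqn:f}) (namely $0$ for $\AN, \BN, \BNv, \DN$ and $1/2$ for $\CN, \CNv, \BCN$, together with the parity correction $(1-(-1)^N)/4$ for $\AN$) together with the $\pm$-symmetrization/antisymmetrization pattern reproduce their matrices.

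Once the matrices are identified, I would read the three variable-dependent factors directly off the right-hand side of~\cite[Proposition~6.1]{RS06}. The antisymmetric pairwise factor $k_2^{\RN}(u_k,u_j;\tau)$ in~(\ref{eqn:factor2}), namely $\vartheta_1((u_k-u_j)/2\pi r;\tau)$ in type $\AN$ and the product $\vartheta_1((u_k-u_j)/2\pi r;\tau)\vartheta_1((u_k+u_j)/2\pi r;\tau)$ in the other six cases, arises as the ``Weyl denominator'' portion common to each evaluation. The single-variable factor $k_1^{\RN}$ collects the boundary contributions specific to the $B/C/BC$ families (and is trivial for $\AN,\DN$), while the non-trivial symmetric factor appears only in type~$\AN$, where the extra $\vartheta_1((\sum_j u_j-\kappa_N)/2\pi r;\tau)$ reflects the center-of-mass modulation intrinsic to the level-$\cN^{\AN}$ theta basis; the parity of $N$ in $\kappa_N$ is exactly what forces the shift $(1-(-1)^N)/4$ inserted into $f^{\AN}_j$.

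The final step is to isolate the overall constant $k_0^{\RN}(\tau)$. This is a bookkeeping exercise: expand each $\vartheta_1$ appearing on both sides using the product representation, so that each contributes a factor of $q(\tau)^{1/4}$ and an infinite product $q_0(\tau)$, $q_0(2\tau)$ or $q_0(\tau/2)$ depending on which nome it carries (see~(\ref{eqn:q_q0})), and strip out the variable-dependent parts already accounted for by $k_1^{\RN}, k_2^{\RN}, k_{\rm sym}^{\RN}$. The resulting $q$-series prefactor, together with the $i^{\,\bullet}$-phases coming from the $\pm$-combinations in~(\ref{eqn:f}) and the constants $2$ (for $\BN,\BNv$), $4$ (for $\DN$) arising from the leading Vandermonde-type coefficient, gives exactly the expressions~(\ref{eqn:k0}). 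The main obstacle is the case-by-case tracking of these phases and fractional powers of $q$ across all seven families; it is cleanest to carry out the computation in full for, say, $\AN$ and $\CN$, and to indicate that the remaining five cases ($\BN, \BNv, \CNv, \BCN, \DN$) proceed by the same combinatorial counting applied to the corresponding entry of~\cite[Proposition~6.1]{RS06}.
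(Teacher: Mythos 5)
Your proposal follows exactly the route the paper takes: the paper offers no written proof of Lemma~\ref{thm:RS} beyond the remark that the factorization is ``easy to verify'' from \cite[Proposition~6.1]{RS06}, and your plan --- translate notation so that the functions~(\ref{eqn:f}) match the Rosengren--Schlosser matrices, read off the factors $k_{\rm sym}^{\RN}$, $k_1^{\RN}$, $k_2^{\RN}$ from their evaluations, and extract $k_0^{\RN}(\tau)$ by expanding the theta functions in their product forms --- is precisely that verification carried out explicitly. The approach is correct and, if anything, more detailed than the paper's own treatment.
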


\subsection{Determinantal martingale-functions} \label{sec:martingale}

Since $f_j^{\RN}(z; \tau)$, $j \in \Z$ given by (\ref{eqn:f}) are entire and non-constant functions of $z \in \C$,
\begin{gather}
\widehat{f}^{\RN}_j(t, x; \tau) =\widetilde{\rE} [ f^{\RN}_j(x+ i \widetilde{B}(t); \tau)], \qquad t \in [0, \infty), \qquad j \in \Z,\label{eqn:fhat_sharp}
\end{gather}
give the single-variable martingale-functions such that, if $\widehat{f}^{\RN}_j(t, B(t); \tau)$, $ t \geq 0$, $j \in \Z$ are f\/inite, then they give martingales with respect to the f\/iltration $\cF_t$ of one-dimensional Brownian motion~$B(t)$, $t \geq 0$,
\begin{gather*}
\rE[ \widehat{f}^{\RN}_j(t, B(t); \tau) | \cF_s] =\widehat{f}^{\RN}_j(s, B(s); \tau), \qquad 0 \leq s < t < \infty \qquad \mbox{a.s.} \quad j \in \Z.
\end{gather*}
We have found that $\widehat{f}^{\RN}_j(t, x; \tau)$'s are expressed using $f^{\RN}_j(t, x; \cdot)$ by shifting
\begin{gather*}
\tau \to \tau - \frac{i \cN^{\RN} t}{2 \pi r^2}
\end{gather*}
with (\ref{eqn:cN_R}) and multiplying time-dependent factors. With $0 < t_{\ast} < \infty$, we put~(\ref{eqn:tau_R}).

\begin{Lemma}\label{thm:f_hat} For $f^{\RN}_j$, $j \in \Z$ given by \eqref{eqn:f}, we obtain the following infinite series of linearly independent martingale-functions
\begin{gather}
\widehat{f}^{\RN}_j\big(t, x; \tau^{\RN}(0)\big)=e^{J^{\RN}(j)^2 t/2 r^2} f^{\RN}_j\big(x; \tau^{\RN}(t)\big),\qquad t \in [0, t_{\ast}), \qquad j \in \Z, \label{eqn:f_hat_sharp}
\end{gather}
where $J^{\RN}(j)$, $j \in \Z$ are given by \eqref{eqn:J_R}.
\end{Lemma}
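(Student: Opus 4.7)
The approach is to exploit the Jacobi-series expansion of the theta function. By definition,
\[
\widehat{f}^{\RN}_j(t, x; \tau) = \int_{-\infty}^{\infty} f^{\RN}_j(x + i\widetilde{x}; \tau)\, p_{\rm BM}(t, \widetilde{x}\mid 0)\, d\widetilde{x},
\]
so the strategy is to write $f^{\RN}_j(z;\tau)$ as an absolutely convergent series of plane-wave exponentials $c_n(\tau)\, e^{i a_n z}$, apply the elementary Gaussian identity $\int_{-\infty}^{\infty} e^{ia(x+i\widetilde{x})}\, p_{\rm BM}(t,\widetilde{x}\mid 0)\, d\widetilde{x} = e^{iax + a^2 t/2}$ term by term, and then recognise the resulting series as the expansion of $e^{J^{\RN}(j)^2 t/(2r^2)}\, f^{\RN}_j\bigl(x;\tau - i\cN^{\RN} t/(2\pi r^2)\bigr)$, which at $\tau=\tau^{\RN}(0)$ becomes the right-hand side of (\ref{eqn:f_hat_sharp}).

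I would carry the computation out explicitly for one representative family, e.g., $\RN=\BN$; the other six are structurally identical. Inserting the Jacobi series for $\vartheta_1$ into the definition and combining the two pieces yields
\[
f^{\BN}_j(z; \tau) = -i\sum_{n\in\Z}(-1)^n e^{\pi i \cN \tau (n+1/2)^2 + \pi i(2n+1) J \tau}\bigl(e^{i a_n z} - e^{-i a_n z}\bigr),
\]
with $J = J^{\BN}(j) = j-1$, $\cN = \cN^{\BN} = 2N-1$, and $a_n = [2J + (2n+1)\cN]/(2r)$. Applying the Gaussian identity to each $e^{\pm i a_n z}$ inserts the common factor $e^{a_n^2 t/2}$, and a direct algebraic check gives
\[
\pi i \cN \tau(n+1/2)^2 + \pi i (2n+1) J \tau + \tfrac{1}{2} a_n^2 t = \pi i \cN \tau'(n+1/2)^2 + \pi i(2n+1) J \tau' + \tfrac{J^2 t}{2 r^2}
\]
with $\tau' = \tau - i\cN t/(2\pi r^2)$: the $n$-dependent contributions, namely $\cN^2(2n+1)^2 t/(8 r^2) + \cN J(2n+1) t/(2r^2)$, appear on both sides and cancel, leaving only $J^2 t/(2r^2)$. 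Setting $\tau=\tau^{\BN}(0)$ makes $\tau'=\tau^{\BN}(t)$, and (\ref{eqn:f_hat_sharp}) drops out.

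The other six families are handled identically: each $f^{\RN}_j$ is a $\pm$ combination of pieces $e^{\pm i J z/r}\vartheta_1\bigl(\pm \cN z/(2\pi r) + \mathrm{const};\cN\tau\bigr)$, and the internal $\tfrac{1}{2}$-shifts (as well as the extra $\vartheta_0$ factor in $f^{\BCN}_j$, whose series has the same exponential skeleton) only modify the phases $c_n(\tau)$, not the exponents $a_n$, so the algebraic cancellation above is unchanged. The one analytic point to verify is that termwise integration is legitimate: the $n^2$-coefficient in $a_n^2 t/2$ is $\cN^2 t/(2 r^2)$, while the corresponding coefficient in the decay factor $e^{-\pi \cN \Im\tau^{\RN}(0)(n+1/2)^2}$ of the series is $-\cN^2 t_{\ast}/(2 r^2)$, so for every $t\in[0,t_\ast)$ the combined series is absolutely convergent uniformly on compact sets in $x$; dominated convergence then justifies the interchange of sum and integral. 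I expect the main (purely bookkeeping) obstacle to be verifying the above cancellation uniformly across the seven choices of $(J^{\RN},\cN^{\RN})$ and internal shifts, because each case differs slightly in signs and offsets even though the essential identity is the same.
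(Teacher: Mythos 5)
Your proof is correct and follows essentially the same route as the paper: expand $\vartheta_1$ in its defining Jacobi series, apply the Gaussian identity $\widetilde{\rE}\big[e^{c \widetilde{B}(t)}\big]=e^{c^2 t/2}$ termwise, and absorb the $n$-dependent part of the resulting exponent into the shift $\tau \mapsto \tau - i\cN^{\RN}t/(2\pi r^2)$, leaving exactly the prefactor $e^{J^{\RN}(j)^2 t/2r^2}$. One harmless slip: $f^{\BCN}_j$ in \eqref{eqn:f} has no extra $\vartheta_0$ factor (that factor appears only in $k_1^{\BCN}$ and $\Phi^{\BCN}_{\u,u_j}$), so the $\BCN$ case is covered by exactly the same computation as $\CN$ and $\CNv$.
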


\begin{proof}By (\ref{eqn:f}), it is enough to calculate
\begin{gather}
\widetilde{\rE}\left[ e^{\pm i J^{\RN}(j)(x+i \widetilde{B}(t))/r}\vartheta_1 \left(J^{\RN}(j) \tau \pm \frac{\cN^{\RN}(x+i \widetilde{B}(t))}{2 \pi r} + \alpha ;\cN^{\RN} t \right) \right] \label{eqn:Etildef1}
\end{gather}
with a constant $\alpha$. By the def\/inition (\ref{eqn:theta}) of $\vartheta_1$, this is equal to
\begin{gather*}
e^{\pm i J^{\RN}(j) x} i \sum_{n \in \Z} (-1)^n e^{\cN^{\RN} \tau \pi i (n-1/2)^2+(J^{\RN}(j) \tau \pm \cN^{\RN}x/2 \pi r + \alpha) \pi i (2n-1)}\\
\qquad{}\times \widetilde{\rE} \big[e^{\mp \{J^{\RN}(j)+ \cN^{\RN}(2n-1)/2\} \widetilde{B}(t)/r}\big].
\end{gather*}
Since
\begin{gather*}
\widetilde{\rE} \big[e^{\mp \{J^{\RN}(j)+ \cN^{\RN}(2n-1)/2\} \widetilde{B}(t)/r} \big]= e^{\{J^{\RN}(j)+\cN^{\RN}(2n-1)/2\}^2 t/2r^2}\\
\qquad {} =e^{J^{\RN}(j)^2 t/2r^2}e^{\cN^{\RN}(-i \cN^{\RN}t/2 \pi r^2) \pi i (n-1/2)^2+J^{\RN}(j)(-i \cN^{\RN}t/2 \pi r^2) \pi i (2n-1)},
\end{gather*}
we can see that (\ref{eqn:Etildef1}) is equal to
\begin{gather*}
e^{J^{\RN}(j)^2 t/2r^2}e^{\pm i J^{\RN}(j) x/r}\vartheta_1 \left(J^{\RN}(j) \left(\tau-\frac{i \cN^{\RN} t}{2 \pi r^2} \right)\pm \frac{\cN^{\RN} x}{2 \pi r} + \alpha ;\cN^{\RN} \left( \tau-\frac{i \cN^{\RN} t}{2 \pi r^2} \right)\right).
\end{gather*}
Therefore, if we set $\tau=i \cN^{\RN} t_{\ast}/2 \pi r^2$, (\ref{eqn:f_hat_sharp}) are obtained.
\end{proof}

We call the multivariate function $D_{\u}(t, \x)$ of $\x \in \R^N$ def\/ined by (\ref{eqn:D1}) the {\it determinantal martingale-function}, since if it is f\/inite, it gives the determinantal martingale when we put the $N$-dimensional Brownian motion $\B(t)$, $t \geq 0$ into $\x$. The following is proved.

\begin{Proposition} \label{thm:det_mar_fun} For $f^{\RN}_j$, $j \in \Z$ given by \eqref{eqn:f}, the determinantal martingale-functions are factorized as follows
\begin{gather}
D^{\RN}_{\u}(t, \x) = \frac{c^{\RN}_0\big(\tau^{\RN}(t)\big)}{c^{\RN}_0\big(\tau^{\RN}(0)\big)}
\frac{k_{\rm sym}^{\RN}\big(\x; \tau^{\RN}(t)\big)}{k^{\RN}_{\rm sym}\big(\u; \tau^{\RN}(0)\big)}
\prod_{\ell=1}^N \frac{k^{\RN}_1\big(x_{\ell}; \tau^{\RN}(t)\big)}{k^{\RN}_1\big(u_{\ell}; \tau^{\RN}(0)\big)}
\prod_{1 \leq j < k \leq N}\frac{k^{\RN}_2\big(x_k, x_j; \tau^{\RN}(t)\big)}{k^{\RN}_2\big(u_k, u_j; \tau^{\RN}(0)\big)},\!\!\!\label{eqn:D_R2}
\end{gather}
where
\begin{gather}
c^{\AN}_0(\tau)=\eta(\tau)^{-(N-1)(N-2)/2}\label{eqn:c_0_AN}
\end{gather}
and for $\RN=\BN,\BNv, \CN, \CNv, \BCN, \DN$, $c^{\RN}_0(\tau)$ are given by \eqref{eqn:c_0}. For $\RN=\BN, \BNv, \CN, \CNv$, $\BCN, \DN$, \eqref{eqn:D_R2} are written as \eqref{eqn:D_Ra}--\eqref{eqn:D_Rc} with \eqref{eqn:c_0} and \eqref{eqn:c_1_2_R}.
\end{Proposition}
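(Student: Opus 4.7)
The plan is to combine three ingredients that have already been established: formula \eqref{eqn:D3} expressing the determinantal martingale-function as a ratio of determinants of the one-variable martingale-functions, Lemma~\ref{thm:f_hat} giving the explicit shift formula for $\widehat{f}_{j}^{\RN}$, and Lemma~\ref{thm:RS} providing the factorization of $\det[f_j^{\RN}(u_k;\tau)]$. All of Proposition~\ref{thm:det_mar_fun} reduces to an algebraic rearrangement once these three are in hand.

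First I would substitute the identity \eqref{eqn:f_hat_sharp} into the numerator of \eqref{eqn:D3}: since the factor $e^{J^{\RN}(j)^2 t/2r^2}$ depends only on the row index $j$, it comes out of the determinant as an overall scalar $\exp\bigl\{\tfrac{t}{2r^2}\sum_{j=1}^N J^{\RN}(j)^2\bigr\}$, leaving $\det_{j,k}[f_j^{\RN}(x_k;\tau^{\RN}(t))]$ inside. Both this determinant and the denominator $\det_{j,k}[f_j^{\RN}(u_k;\tau^{\RN}(0))]$ can now be expanded by \eqref{eqn:factor1b}. After cancelling the row/column signs (none appear here) and collecting the symmetric, single-variable and antisymmetric pieces, we arrive at
\begin{gather*}
D^{\RN}_{\u}(t,\x)
=e^{\sum_{j=1}^N J^{\RN}(j)^2 t/2r^2}\,
\frac{k^{\RN}_{0}(\tau^{\RN}(t))}{k^{\RN}_{0}(\tau^{\RN}(0))}\,
\frac{k^{\RN}_{\rm sym}(\x;\tau^{\RN}(t))}{k^{\RN}_{\rm sym}(\u;\tau^{\RN}(0))}
\prod_{\ell=1}^{N}\frac{k^{\RN}_1(x_\ell;\tau^{\RN}(t))}{k^{\RN}_1(u_\ell;\tau^{\RN}(0))}
\prod_{1\le j<k\le N}\frac{k^{\RN}_2(x_k,x_j;\tau^{\RN}(t))}{k^{\RN}_2(u_k,u_j;\tau^{\RN}(0))}.
\end{gather*}
Comparing with the target formula \eqref{eqn:D_R2} reduces the statement to the identity
\begin{gather*}
e^{\sum_{j=1}^N J^{\RN}(j)^2\, t/2r^2}\,
\frac{k^{\RN}_0(\tau^{\RN}(t))}{k^{\RN}_0(\tau^{\RN}(0))}
=\frac{c^{\RN}_0(\tau^{\RN}(t))}{c^{\RN}_0(\tau^{\RN}(0))}.
\end{gather*}

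The verification of this $c^{\RN}_0$--identity is the one place where real computation is needed and will be the main obstacle. The ingredients are the factorization $\eta(\tau)=q(\tau)^{1/12}q_0(\tau)$ coming from \eqref{eqn:Dedekind1} and \eqref{eqn:q_q0}; the shift $\tau^{\RN}(t)-\tau^{\RN}(0)=-i\cN^{\RN} t/(2\pi r^2)$ from \eqref{eqn:tau_R}, which gives $q(\tau^{\RN}(t))^{-a}/q(\tau^{\RN}(0))^{-a}=e^{a\cN^{\RN} t/2r^2}$; and the case-by-case evaluation of $\sum_{j=1}^{N}J^{\RN}(j)^2$ using \eqref{eqn:J_R}. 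In each case the $q_0$-factors in $k^{\RN}_0$ match the $q_0$-factors in $c^{\RN}_0$ (possibly after invoking the analogous shift for $q_0(2\tau)$ or $q_0(\tau/2)$ with $\BNv, \BCN$ and $\CNv$), while the remaining $q$-powers in $k^{\RN}_0$ together with the scalar exponential must cancel against the $q$-power in the $\eta$-expansion of $c^{\RN}_0$. For instance, for $\RN=\BN$ one has $\sum_{j=1}^N J^{\BN}(j)^2 = N(N-1)(2N-1)/6$, $\cN^{\BN}=2N-1$, and $c^{\BN}_0(\tau)=q(\tau)^{-N(N-1)/12}q_0(\tau)^{-N(N-1)}$, so the $q_0$-pieces already agree and the $e^{\cdot}$-prefactor is tuned exactly to convert the $q(\tau)^{-N(N-1)/4}$ in $k^{\BN}_0$ into $q(\tau)^{-N(N-1)/12}$. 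The other six root systems are checked by the same bookkeeping, the only subtlety being the appearance of $2\tau$ and $\tau/2$ in the $\BNv,\BCN,\CNv$ cases, which contribute an extra factor of $2$ or $1/2$ inside the shift.

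Finally, to obtain \eqref{eqn:D_Ra}--\eqref{eqn:D_Rc} from \eqref{eqn:D_R2}, I would simply insert the explicit formulas \eqref{eqn:factor2} for $k^{\RN}_{\rm sym}$, $k^{\RN}_1$ and $k^{\RN}_2$. For $\RN\in\{\BN,\BNv,\CN,\CNv\}$ the single-variable factor $k^{\RN}_1(u;\tau)$ can be written uniformly as $\vartheta_1(c^{\RN}_1 u/2\pi r; c^{\RN}_2\tau)$ with the constants \eqref{eqn:c_1_2_R}, which produces \eqref{eqn:D_Ra}; the $\BCN$ and $\DN$ cases give \eqref{eqn:D_Rb} and \eqref{eqn:D_Rc} directly, since $k^{\BCN}_1$ is already a product of two $\vartheta$-factors and $k^{\DN}_1\equiv 1$. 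This completes the reduction.
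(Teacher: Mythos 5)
Your proposal is correct and follows essentially the same route as the paper: pull the scalar $e^{J^{\RN}(j)^2 t/2r^2}$ out of the determinant in \eqref{eqn:D3} by multilinearity, factorize numerator and denominator with Lemma~\ref{thm:RS}, and identify $e^{(t/2r^2)\sum_{j}J^{\RN}(j)^2}\,k_0^{\RN}\big(\tau^{\RN}(t)\big)/k_0^{\RN}\big(\tau^{\RN}(0)\big)$ with $c_0^{\RN}\big(\tau^{\RN}(t)\big)/c_0^{\RN}\big(\tau^{\RN}(0)\big)$ via $\eta(\tau)=q(\tau)^{1/12}q_0(\tau)$. The only blemish is the sign in your quoted ratio $q\big(\tau^{\RN}(t)\big)^{-a}/q\big(\tau^{\RN}(0)\big)^{-a}$, which equals $e^{-a\cN^{\RN}t/2r^2}$ rather than $e^{+a\cN^{\RN}t/2r^2}$; with the correct sign your $\BN$ check reads $\tfrac16-\tfrac14=-\tfrac1{12}$ and the bookkeeping goes through exactly as you describe.
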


\begin{proof} By the second equality of (\ref{eqn:D3}) and Lemma \ref{thm:f_hat},
\begin{gather*}
D^{\RN}_{\u}(t, \x)= \frac{{\det\limits_{1 \leq j, k \leq N}\big[e^{J^{\RN}(j)^2 t/2r^2} f^{\RN}_j(x_k; \tau^{\RN}(t))\big]}}
{{\det\limits_{1 \leq j, k \leq N} \big[f^{\RN}_j(u_k; \tau^{\RN}(0))\big]}}
= e^{(t/2r^2) \sum\limits_{j=1}^N J^{\RN}(j)^2}
\frac{{\det\limits_{1 \leq j, k \leq N}\big[f^{\RN}_j(x_k; \tau^{\RN}(t))\big]}}
{{\det\limits_{1 \leq j, k \leq N} \big[f^{\RN}_j(u_k; \tau^{\RN}(0))\big]}},
\end{gather*}
where the multi-linearity of determinant was used in the second equality. Then applying Lemma~\ref{thm:RS}, we obtain~(\ref{eqn:D_R2}) with
\begin{gather*}
\frac{c^{\RN}_0(\tau^{\RN}(t))}{c^{\RN}_0(\tau^{\RN}(0))} = e^{(t/2r^2) \sum\limits_{j=1}^N J^{\RN}(j)^2} \frac{k^{\RN}_0(\tau^{\RN}(t))}{k^{\RN}_0(\tau^{\RN}(0))}.
\end{gather*}
By (\ref{eqn:k0}) and the summation formulas
\begin{gather*}
\sum\limits_{j=1}^N j=N(N+1)/2, \qquad \sum\limits_{j=1}^N j^2 = N(N+1)(2N+1)/6,
 \end{gather*} we obtain the expressions~(\ref{eqn:c_0}) using the Dedekind modular function~(\ref{eqn:Dedekind1}), since $\eta(\tau)=e^{\tau \pi i/12} q_0(\tau)$ with~(\ref{eqn:q_q0}). Thus we can derive readily the expressions (\ref{eqn:D_Ra})--(\ref{eqn:D_Rc}) with~(\ref{eqn:c_0}) and~(\ref{eqn:c_1_2_R}) from~(\ref{eqn:D_R2}).
\end{proof}

\section{Proof of Theorem \ref{thm:main1}} \label{sec:proof1}
Assume $t \in [0, t_{\ast})$. By (\ref{eqn:theta_positive}), we see that if $0 \leq x \leq 2 \pi r$, $0 \leq k_1^{\RN}\big(x; \tau^{\RN}(t)\big) < \infty$ for $\RN=\BN, \CNv, \BCN$; if $0 \leq x \leq \pi r$, $0 \leq k_1^{\RN}\big(x; \tau^{\RN}(t)\big) < \infty$ for $\RN=\BNv, \CN$; and $k_1^{\DN} \equiv 1$. We also see that if
\begin{gather}
|x_1| < x_2 < \cdots < x_{N-1} < x_N \wedge (2 \pi r -x_N),\label{eqn:positive1a}
\end{gather}
then \looseness=-1 $0 < \prod\limits_{1 \leq j < k \leq N} k_2^{\RN}\big(x_k, x_j; \tau^{\RN}(t)\big) < \infty$. The inequality $|x_1|<x_2$ means that $x_2$ is greater than both of $x_1$ and its ref\/lection at $0$, and $x_{N-1} < x_N \wedge (2 \pi r -x_N)$ means that $x_{N-1}$ is smaller than both of $x_N$ and its ref\/lection at~$\pi r$. The above observation implies that if $\u \in \W_N^{(0, \pi r)}$, then
\begin{gather*}
\begin{split}
& 0 \leq \prod_{\ell=1}^N k^{\RN}_1\big(B_{\ell}(t); \tau^{\RN}(t)\big)\prod_{1 \leq j < k \leq N} k^{\RN}_2\big(B_k(t), B_j(t); \tau^{\RN}(t)\big) < \infty,\\
& \text{for} \quad 0 \leq t \leq T_{\rm collision}, \qquad \text{a.s.\ in $\rP^{[0, \pi r]}_{\u}$}.
\end{split}
\end{gather*}

Due to the Karlin--McGregor--Lindstr\"om--Gessel--Viennot (KMLGV) formula \cite{GV85, KM59,Lin73} for non-intersecting paths, the transition probability density of the $N$ particle system in $[0, \pi r]$ governed by the probability law $\P^{\RN}_{\u}$ def\/ined by~(\ref{eqn:Plaw_R1}) is given by
\begin{gather}
p^{\RN}(t, \y|s, \x) = \frac{c^{\RN}_0\big(\tau^{\RN}(t)\big)}{c^{\RN}_0\big(\tau^{\RN}(s)\big)}\prod_{\ell=1}^N \frac{k^{\RN}_1\big(y_{\ell}; \tau^{\RN}(t)\big)} {k^{\RN}_1\big(x_{\ell}; \tau^{\RN}(s)\big)}\prod_{1 \leq j < k \leq N} \frac{k^{\RN}_2\big(y_k, y_j; \tau^{\RN}(t)\big)}{k^{\RN}_2\big(x_k, x_j; \tau^{\RN}(s)\big)}\nonumber\\
\hphantom{p^{\RN}(t, \y|s, \x) =}{} \times
\det_{1 \leq n, n' \leq N} \big[ p^{[0, \pi r]}(t-s, y_n|x_{n'}) \big], \qquad 0 \leq s < t < t_{\ast}, \quad \x, \y \in \W_N^{(0, \pi r)}.\!\!\!\label{eqn:p_R1}
\end{gather}
Since we consider the measure-valued stochastic process~(\ref{eqn:Xi_R1}), the conf\/iguration is unlabeled and hence all the observables at each time should be symmetric functions of particle positions. For an arbitrary number $M \in \N$ and an arbitrary strictly increasing series of times $0 \equiv t_0 < t_1 < \cdots < t_M < t_{\ast}$, let $g_{t_m}(\x)$, $m =1,2, \dots, M$, be symmetric functions of $\x \in \W_N^{(0, \pi r)}$. Then~(\ref{eqn:Plaw_R1}) and~(\ref{eqn:p_R1}) give
\begin{gather}
 \E^{\RN}_{\u} \left[ \prod_{m=1}^M g_{t_m}(\X(t_m)) \right]=\rE_{\u}^{[0, \pi r]} \left[
\prod_{m=1}^M g_{t_m}(\B(t_m)){\bf 1}(T_{\rm collision} > t_M ) D^{\RN}_{\u}(t, \B(t_M)) \right]\nonumber\\
{} =\int_{\W_N^{(0, \pi r)}} d \x^{(1)} \cdots \int_{\W_N^{(0, \pi r)}} d \x^{(M)}\prod_{m=1}^M
\det_{1 \leq n, {n'} \leq N} \big[ p^{[0, \pi r]} \big(t_m-t_{m-1}, x^{(m)}_n | x^{(m-1)}_{n'}\big) \big]g_{t_m}\big(\x^{(m)}\big)\nonumber\\
\quad{} \times \frac{c^{\RN}_0\big(\tau^{\RN}(t_M)\big)}{c^{\RN}_0\big(\tau^{\RN}(0)\big)}
\prod_{\ell=1}^N \frac{k^{\RN}_1\big(x^{(M)}_{\ell}; \tau^{\RN}(t_M)\big)}{k^{\RN}_1\big(u_{\ell}; \tau^{\RN}(0)\big)}
\prod_{1 \leq j < k \leq N} \frac{k^{\RN}_2\big(x^{(M)}_k, x^{(M)}_j; \tau^{\RN}(t_M)\big)}
{k^{\RN}_2\big(u_k, u_j; \tau^{\RN}(0)\big)}.\label{eqn:E_eq1}
\end{gather}
By def\/inition of determinant, the above is equal to
\begin{gather*}
 \sum_{\sigma_1 \in \cS_N} \cdots \sum_{\sigma_M \in \cS_N}\int_{\W_N^{(0, \pi r)}} d \x^{(1)} \cdots\int_{\W_N^{(0, \pi r)}} d \x^{(M)} \\
\qquad {}\times
\prod_{m=1}^M \left\{{\rm sgn}(\sigma_m)\prod_{n=1}^Np^{[0, \pi r]} \big(t_m-t_{m-1}, x^{(m)}_{\sigma^{(m)}(n)} | x^{(m-1)}_{\sigma^{(m-1)}(n)}\big)
g_{t_m}\big(\x^{(m)}\big)\right. \\
 \qquad \qquad {} \times \frac{c^{\RN}_0\big(\tau^{\RN}(t_m)\big)}{c^{\RN}_0\big(\tau^{\RN}(t_{m-1})\big)}
\prod_{\ell=1}^N \frac{k^{\RN}_1\big(x^{(m)}_{\ell}; \tau^{\RN}(t_m)\big)} {k^{\RN}_1\big(x^{(m-1)}_{\ell}; \tau^{\RN}(t_{m-1})\big)} \\
 \left.\qquad \qquad{}
\times \left| \prod_{1 \leq j < k \leq N} \frac{k^{\RN}_2\big(x^{(m)}_{\sigma^{(m)}(k)}, x^{(m)}_{\sigma^{(m)}(j)}; \tau^{\RN}(t_m)\big)}
{k^{\RN}_2\big(x^{(m-1)}_{\sigma^{(m-1)}(k)}, x^{(m-1)}_{\sigma^{(m-1)}(j)}; \tau^{\RN}(t_{m-1})\big)} \right| \right\},
\end{gather*}
where $\cS_N$ denotes the collection of all permutations of $N$ indices $\{1,2, \dots, N\}$, $\sigma^{(m)} \equiv \sigma_1 \circ \sigma_2 \circ \cdots \circ \sigma_m$, $m \geq 1$, $\sigma^{(0)}={\rm id}$, and $x^{(0)}_j=u_j$, $j=1,\dots, N$. Here we used the fact that $g_{t_m}(\x)$'s and $\prod\limits_{\ell=1}^N k^{\RN}_1(x_{\ell}; \cdot)$ are symmetric functions of~$\x$. By the def\/inition of $\W_N^{(0, \pi r)}$ and the fact that
\begin{gather*}
\prod_{1 \leq j < k \leq N} k^{\RN}_2(x_{\sigma(k)}, x_{\sigma(j)}; \cdot) = {\rm sgn}(\sigma)\prod_{1 \leq j < k \leq N} k^{\RN}_2(x_k, x_j; \cdot),\qquad \sigma \in \cS_N,
\end{gather*}
this is equal to
\begin{gather*}
\int_{[0, \pi r]^N} d \x^{(1)} \cdots \int_{[0, \pi r]^N} d \x^{(M)}\prod_{m=1}^M \left\{\prod_{n=1}^N
p^{[0, \pi r]} \big(t_m-t_{m-1}, x^{(m)}_n | x^{(m-1)}_n\big)g_{t_m}\big(\x^{(m)}\big) \right\}\\
\qquad {} \times \frac{c^{\RN}_0\big(\tau^{\RN}(t_M)\big)}{c^{\RN}_0\big(\tau^{\RN}(0)\big)}
\prod_{\ell=1}^N \frac{k^{\RN}_1\big(x^{(M)}_{\ell}; \tau^{\RN}(t_M)\big)}
{k^{\RN}_1\big(u_{\ell}; \tau^{\RN}(0)\big)}\prod_{1 \leq j < k \leq N} \frac{k^{\RN}_2\big(x^{(M)}_k, x^{(M)}_j; \tau^{\RN}(t_M)\big)}
{k^{\RN}_2\big(u_k, u_j; \tau^{\RN}(0)\big)}.
\end{gather*}
Therefore, we obtain the equality
\begin{gather}
\E^{\RN}_{\u} \left[ \prod_{m=1}^M g_{t_m}(\X(t_m)) \right]= \rE^{[0, \pi r]}_{\u} \left[\prod_{m=1}^M g_{t_m}(\B(t_m)) D^{\RN}_{\u}(t_M, \B(t_M)) \right],\label{eqn:E_eq2}
\end{gather}
that is, the noncolliding condition, $T_{\rm collision} > t_M$, can be omitted in (\ref{eqn:E_eq1}). The expression (\ref{eqn:E_eq2}) is interpreted as the expectation of the product of symmetric functions $\prod\limits_{m=1}^M g_{t_m}(\cdot)$ with respect to the signed measure
\begin{gather}
\widehat{\P}^{\RN}_{\u} \big|_{\cF_t} = D^{\RN}_{\u}(t, \B(t)) \rP^{[0, \pi r]}_{\u} \big|_{\cF_t},\qquad t \in [0, t_{\ast}), \label{eqn:signed_meas1}
\end{gather}
which is a modif\/ication of (\ref{eqn:Phat1}) obtained by replacing $\rP_{\u}$ by $\rP_{\u}^{[0, \pi r]}$ and $D_{\u}$ by $D^{\RN}_{\u}$. Then we can apply Proposition \ref{thm:Fredholm}, where we replace $p_{\rm BM}$ by $p^{[0, \pi r]}$ and use the proper martingale functions given by~(\ref{eqn:MB1}) with (\ref{eqn:Phi_Ra})--(\ref{eqn:Phi_Rc}). Thus the proof is completed.

\section{Proof of Theorem \ref{thm:main2}} \label{sec:proof2}

Theorem \ref{thm:main2} is concluded from the following key lemma.

\begin{Lemma}\label{thm:Kolmogorov}
The transition probability densities $p^{\BN}$, $p^{\CN}$ and $p^{\DN}$ given by~\eqref{eqn:p_R1} for $\RN=\BN,\CN$, and~$\DN$ solve the following backward Kolmogorov equations
\begin{gather}
 - \frac{\partial p^{\BN}(t, \y|s, \x)}{\partial s}= \frac{1}{2} \sum_{j=1}^N \frac{\partial^2 p^{\BN}(t, \y|s, \x)}{\partial x_j^2}
+ \sum_{j=1}^N A^{2 \pi r}_{2N-1}(t_{\ast}-s, x_j)\frac{\partial p^{\BN}(t, \y|s, \x)}{\partial x_j}\nonumber\\
\qquad {} + \sum_{\substack{1 \leq j, k \leq N, \cr j \not= k}}
\big(A^{2 \pi r}_{2N-1}(t_{\ast}-s, x_j-x_k) + A^{2 \pi r}_{2N-1}(t_{\ast}-s, x_j+x_k) \big)
\frac{\partial p^{\BN}(t, \y|s, \x)}{\partial x_j},\label{eqn:Kolm_B1}\\
- \frac{\partial p^{\CN}(t, \y|s, \x)}{\partial s}
= \frac{1}{2} \sum_{j=1}^N \frac{\partial^2 p^{\CN}(t, \y|s, \x)}{\partial x_j^2}
+2 \sum_{j=1}^N A^{2 \pi r}_{2(N+1)}(t_{\ast}-s, 2 x_j)\frac{\partial p^{\CN}(t, \y|s, \x)}{\partial x_j}\nonumber\\
\qquad{} +\sum_{\substack{1 \leq j, k \leq N, \cr j \not= k}}
\big(A^{2 \pi r}_{2(N+1)}(t_{\ast}-s, x_j-x_k) + A^{2 \pi r}_{2(N+1)}(t_{\ast}-s, x_j+x_k)\big )
\frac{\partial p^{\CN}(t, \y|s, \x)}{\partial x_j},\!\!\!\label{eqn:Kolm_C1}\\
 - \frac{\partial p^{\DN}(t, \y|s, \x)}{\partial s}
= \frac{1}{2} \sum_{j=1}^N \frac{\partial^2 p^{\DN}(t, \y|s, \x)}{\partial x_j^2}\nonumber\\
\qquad {}+\sum_{\substack{1 \leq j, k \leq N, \cr j \not= k}}
\big( A^{2 \pi r}_{2(N-1)}(t_{\ast}-s, x_j-x_k) + A^{2 \pi r}_{2(N-1)}(t_{\ast}-s, x_j+x_k) \big)
\frac{\partial p^{\DN}(t, \y|s, \x)}{\partial x_j},\!\!\!\!\label{eqn:Kolm_D1}
\end{gather}
for $\x, \y \in \W_N^{(0, \pi r)}$, $0 \leq s < t < t_{\ast}$, under the conditions
\begin{gather}
\lim_{s \uparrow t} p^{\BN}(t, \y|s, \x)= \lim_{s \uparrow t} p^{\CN}(t, \y|s, \x)=\lim_{s \uparrow t} p^{\DN}(t, \y|s, \x)=\prod_{j=1}^N \delta(x_j-y_j).
\label{eqn:Kolm_initial}
\end{gather}
\end{Lemma}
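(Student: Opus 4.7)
The plan is to recognize $p^{\RN}$ as a Doob $h$-transform of the Karlin--McGregor determinantal kernel and then invoke the caloricity of the determinantal martingale-function established in Proposition \ref{thm:det_mar_fun}. For $\RN \in \{\BN, \CN, \DN\}$ (where $k^{\RN}_{\rm sym} \equiv 1$ by Lemma \ref{thm:RS}), set
\begin{gather*}
h^{\RN}(s, \x) := c_0^{\RN}\big(\tau^{\RN}(s)\big) \prod_{\ell=1}^N k_1^{\RN}\big(x_\ell; \tau^{\RN}(s)\big) \prod_{1\le j<k\le N} k_2^{\RN}\big(x_k, x_j; \tau^{\RN}(s)\big),
\end{gather*}
so that (\ref{eqn:p_R1}) reads
\begin{gather*}
p^{\RN}(t, \y|s, \x) = \frac{h^{\RN}(t, \y)}{h^{\RN}(s, \x)}\, P(s, \x; t, \y), \qquad P(s, \x; t, \y) := \det_{1\le n, n'\le N}\big[p^{[0, \pi r]}(t-s, y_n | x_{n'})\big].
\end{gather*}

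First I would check that both $P$ and $h^{\RN}$ satisfy the backward heat equation $(\partial_s + \tfrac12 \Delta_\x)(\,\cdot\,) = 0$ in $(s, \x) \in [0, t)\times \W_N^{(0,\pi r)}$. For $P$, the $n'$-th column depends only on $x_{n'}$ and $p^{[0, \pi r]}(t-s, y|x)$ solves $-\partial_s p^{[0,\pi r]} = \tfrac12 \partial_x^2 p^{[0,\pi r]}$; multilinearity of the determinant then gives the claim. For $h^{\RN}$, Proposition \ref{thm:det_mar_fun} exhibits $D^{\RN}_\u(t, \x)$ as $h^{\RN}(t, \x)$ times an $\x$-independent factor, and $D^{\RN}_\u(t, \B(t))$ is a martingale for the $N$-dimensional standard Brownian motion by the construction of Section \ref{sec:basic} combined with Lemma \ref{thm:f_hat}; It\^{o}'s formula then forces $h^{\RN}$ to be caloric.

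The second step is the standard $h$-transform identity: when both $P$ and $h$ solve the backward heat equation in $(s, \x)$, a direct computation using only the product rule shows that $p = (h(t,\y)/h(s,\x))\,P$ satisfies
\begin{gather*}
-\partial_s p = \frac{1}{2}\Delta_\x p + \sum_{j=1}^N \frac{\partial_{x_j} h(s,\x)}{h(s,\x)}\, \partial_{x_j} p.
\end{gather*}
It remains to match $\partial_{x_j}\log h^{\RN}$ with the $A^{2\pi r}_{\cN^{\RN}}$-sums in (\ref{eqn:Kolm_B1})--(\ref{eqn:Kolm_D1}). Using the explicit factors in (\ref{eqn:factor2}), the definition (\ref{eqn:A1}) of $A^{2\pi r}_\cN$, and the oddness $\vartheta_1(-v;\tau) = -\vartheta_1(v;\tau)$ (whence $A_\cN(t,-x) = -A_\cN(t,x)$), each $\partial_{x_j}\log\vartheta_1((x_k - x_j)/2\pi r;\tau^{\RN}(s))$ contributes $A^{2\pi r}_{\cN^{\RN}}(t_\ast - s,\, x_j - x_k)$, while each $\partial_{x_j}\log\vartheta_1((x_k + x_j)/2\pi r;\tau^{\RN}(s))$ contributes $A^{2\pi r}_{\cN^{\RN}}(t_\ast - s,\, x_j + x_k)$. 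The solitary prefactor $2$ in front of $A^{2\pi r}_{2(N+1)}(t_\ast - s, 2x_j)$ for $\CN$ comes from the argument $x/\pi r$ (instead of $x/2\pi r$) in $k_1^{\CN}$, while the absence of any single-particle drift for $\DN$ reflects $k_1^{\DN}\equiv 1$.

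The initial condition (\ref{eqn:Kolm_initial}) is immediate: as $s \uparrow t$ each $p^{[0,\pi r]}(t-s, y_n|x_{n'}) \to \delta(y_n - x_{n'})$, and the strict ordering of $\x, \y \in \W_N^{(0, \pi r)}$ forces only the identity permutation to survive in the determinant, while $h^{\RN}(t,\y)/h^{\RN}(s,\x) \to 1$ on the diagonal $\x = \y$. The only genuinely nontrivial input is therefore the caloric identity for $h^{\RN}$, which is already a consequence of the determinantal-martingale construction of Section \ref{sec:Det_Eq}; what remains is the $h$-transform identity (a four-line product-rule calculation) and the bookkeeping that converts logarithmic theta-derivatives into the $A^{2\pi r}_{\cN^{\RN}}$-notation. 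I expect the most error-prone point to be getting the combinatorics of the pair interaction right, specifically ensuring that for $k<j$ the derivative $\partial_{x_j}\log\vartheta_1((x_j - x_k)/2\pi r;\tau)$ is handled with the correct sign so as to produce the clean symmetric drift $A(t_\ast-s,x_j-x_k)+A(t_\ast-s,x_j+x_k)$ summed over all $k\neq j$.
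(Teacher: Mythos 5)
Your proposal is correct, and it reaches the crucial identity by a genuinely different route from the paper's proof, although both are at bottom the same $h$-transform computation: write $p^{\RN}=\big(h^{\RN}(t,\y)/h^{\RN}(s,\x)\big)\det\big[p^{[0,\pi r]}(t-s,y_n|x_{n'})\big]$, note that the Karlin--McGregor determinant is caloric column by column, and reduce everything to the single claim that $(\partial_s+\tfrac12\Delta_{\x})h^{\RN}(s,\x)=0$. The paper proves that claim from scratch: it leaves the $s$-dependent normalization $g^{\RN}(s)$ undetermined, shows that the residual potential term \eqref{eqn:eqA1} vanishes by a lengthy computation with the Weierstrass $\zeta$ and $\wp$ functions (the functional equation \eqref{eqn:zeta_wp} together with the identities \eqref{eqn:identities1}--\eqref{eqn:identities3}), and thereby re-derives the Dedekind-eta prefactors as solutions of the ODEs \eqref{eqn:g_RN_1}. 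You instead import the caloricity of $h^{\RN}$ from the determinantal-martingale construction: by Proposition \ref{thm:det_mar_fun} (with $k_{\rm sym}^{\RN}\equiv 1$ for these three root systems), $h^{\RN}(s,\x)$ equals $D^{\RN}_{\u}(s,\x)$ up to an $(s,\x)$-independent constant, and $D^{\RN}_{\u}$ is a determinant whose $(j,k)$ entry $M_{\u,u_j}(s,x_k)$ satisfies $(\partial_s+\tfrac12\partial_{x_k}^2)M=0$, so the determinant is caloric by multilinearity. This is not circular, since Proposition \ref{thm:det_mar_fun} rests only on Lemmas \ref{thm:RS} and \ref{thm:f_hat}, and it is substantially shorter; what the paper's longer route buys is an independent, purely analytic derivation of the $\eta$-powers in \eqref{eqn:c_0}, which your argument takes as given. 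Two minor points. First, I would replace ``It\^o's formula forces $h^{\RN}$ to be caloric'' by the direct argument: each entry satisfies the backward heat equation either by differentiating $M_{\u,u_j}(s,x)=\widetilde{\rE}\big[\Phi^{\RN}_{\u,u_j}(x+i\widetilde{B}(s))\big]$ under the expectation, or from the closed form \eqref{eqn:f_hat_sharp} together with \eqref{eqn:Jacobi_eq}; the It\^o route needs integrability plus a support argument to upgrade ``the drift vanishes along Brownian paths a.s.'' to ``the drift vanishes identically''. Second, your drift bookkeeping --- the sign for $k<j$, the factor $2$ and doubled argument for $\CN$ coming from $k_1^{\CN}(u;\tau)=\vartheta_1(u/\pi r;\tau)$, and the absence of a one-body drift for $\DN$ --- matches the paper exactly, as does the treatment of the initial condition.
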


\begin{proof}For given $t \in (0, t_{\ast})$ and $\y \in \W_N^{(0, \pi r)}$, put
\begin{gather}
u(s, \x)=w(s, \x) q^{[0, \pi r]}(t-s, \y|\x), \qquad 0 < s < t, \qquad \x \in \W_N^{(0, \pi r)} \label{eqn:usx1}
\end{gather}
with the KMLGV determinant
\begin{gather}
q^{[0, \pi r]}(t-s, \y|\x)=\det_{1 \leq j, k \leq N} \big[ p^{[0, \pi r]}(t-s, y_j|x_k)\big],\label{eqn:qN1}
\end{gather}
where $w(s, \x)$ is a $\rC^{1,2}$-function which will be specif\/ied later, and $p^{[0, \pi r]}=p^{[0, \pi r]}_{\rm aa}$ given by (\ref{eqn:p_abs}) for $\RN=\CN$, $p^{[0, \pi r]}=p^{[0, \pi r]}_{\rm rr}$ given by (\ref{eqn:p_ref}) for $\RN=\DN$. Since $p^{[0, \pi r]}$ is the transition probability density
of a boundary-conditioned Brownian motion, it solves the dif\/fusion equation
\begin{gather*}
-\frac{\partial p^{[0, \pi r]}(t-s, y|x)}{\partial s} =\frac{1}{2} \frac{\partial^2 p^{[0, \pi r]}(t-s, y|x)}{\partial x^2}
\end{gather*}
with $\lim\limits_{s \uparrow t} p^{[0, \pi r]}(t-s, y|x)=\delta(x-y)$. The KMLGV determinant (\ref{eqn:qN1}) is the summation of products of $p^{[0, \pi r]}$'s, and hence
\begin{gather*}
- \frac{\partial q(t-s, \y|\x)}{\partial s}= \frac{1}{2} \sum_{j=1}^N \frac{\partial^2 q(t-s, \y|\x)}{\partial x_j^2}
\end{gather*}
is satisf\/ied. It is easy to verify that
\begin{gather}
\lim_{s \uparrow t} q(t-s, \y|\x)=
\prod_{j=1}^N \delta(x_j-y_j).
\label{eqn:qN3}
\end{gather}
Therefore, the following equation holds
\begin{gather*}
\left\{ \frac{\partial}{\partial s}+ \frac{1}{2} \sum_{j=1}^N \frac{\partial^2}{\partial x_j^2} \right\} u(s, \x)\\
 \qquad{} = q^{[0, \pi r]}(t-s, \y|\x) \left\{ \frac{\partial}{\partial s}+ \frac{1}{2} \sum_{j=1}^N \frac{\partial^2}{\partial x_j^2} \right\} w(s, \x)
+\sum_{j=1}^N \frac{\partial w(s, \x)}{\partial x_j}\frac{\partial q^{[0, \pi r]}(t-s, \y|\x)}{\partial x_j}.
\end{gather*}
Since
\begin{gather*}
\sum_{j=1}^N \frac{\partial w(s, \x)}{\partial x_j}\frac{\partial u(s, \x)}{\partial x_j}\\
 \qquad{} = q^{[0, \pi r]}(t-s, \y|\x) \sum_{j=1}^N \left( \frac{\partial w(s, \x)}{\partial x_j} \right)^2
+ w(s, \x) \sum_{j=1}^N \frac{\partial w(s, \x)}{\partial x_j}\frac{\partial q^{[0, \pi r]}(t-s, \y|\x)}{\partial x_j},
\end{gather*}
the above equation is written as
\begin{gather}
\left\{ \frac{\partial}{\partial s}+\frac{1}{2} \sum_{j=1}^N \frac{\partial^2}{\partial x_j^2}- \frac{1}{w(s, \x)}
\sum_{j=1}^N \frac{\partial w(s, \x)}{\partial x_j} \frac{\partial}{\partial x_j}\right\} u(s, \x)\nonumber\\
\qquad{} = q^{[0, \pi r]}(t-s, \y|\x)\left[\left\{ \frac{\partial}{\partial s}+ \frac{1}{2} \sum_{j=1}^N \frac{\partial^2}{\partial x_j^2}\right\} w(s, \x)
- \frac{1}{w(s, \x)} \sum_{j=1}^N \left( \frac{\partial w(s, \x)}{\partial x_j} \right)^2\right].\label{eqn:main_eq}
\end{gather}
Now we put
\begin{gather}
w(s, \x) = w^{\RN}(s, \x)\label{eqn:fsx1}\\
\equiv \left\{g^{\RN}(s)\prod_{\ell=1}^N \vartheta_1 \left( \frac{ c_1^{\RN} x_{\ell}}{2 \pi r}; \tau^{\RN}(s) \right)
\prod_{1 \leq j < k \leq N}\vartheta_1 \left( \frac{x_k-x_j}{2\pi r} ; \tau^{\RN}(s) \right)
\vartheta_1 \left( \frac{x_k+x_j}{2 \pi r}; \tau^{\RN}(s) \right) \right\}^{-1}\nonumber
\end{gather}
for $\RN=\BN, \CN$, and
\begin{gather}
w(s, \x) = w^{\DN}(s, \x)\nonumber\\
\hphantom{w(s, \x)}{} \equiv \left\{ g^{\DN}(s)\prod_{1 \leq j < k \leq N}
\vartheta_1 \left( \frac{x_k-x_j}{2\pi r} ; \tau^{\DN}(s) \right)\vartheta_1 \left( \frac{x_k+x_j}{2 \pi r}; \tau^{\DN}(s) \right) \right\}^{-1}\label{eqn:fsx1b}
\end{gather}
for $\RN=\DN$, where $g^{\RN}(s)$, $\RN=\BN, \CN, \DN$, are $\rC^1$-functions of $s$, which will be determined later, and $c_1^{\BN}=1$, $c_1^{\CN}=2$. We def\/ine $\alpha^{\RN}=1$ for $\RN=\BN$ and $\CN$, and $\alpha^{\RN}=0$ for $\RN=\DN$. We put $c_1^{\DN}=1$ for convention. By (\ref{eqn:A1}) with (\ref{eqn:A_odd}), we see that
\begin{gather}
- \frac{1}{w^{\RN}(s, \x)} \frac{\partial w^{\RN}(s, \x)}{\partial x_j}=\alpha^{\RN} c_1^{\RN} A_{\cN^{\RN}}^{2 \pi r}\big(t_{\ast}-s, c_1^{\RN} x_j\big)
\nonumber\\
\hphantom{- \frac{1}{w^{\RN}(s, \x)} \frac{\partial w^{\RN}(s, \x)}{\partial x_j}=}{}
+\sum_{\substack{1 \leq k \leq N, \cr k \not=j}} \big( A_{\cN^{\RN}}^{2 \pi r}(t_{\ast}-s, x_j-x_k) +A_{\cN^{\RN}}^{2 \pi r}(t_{\ast}-s, x_j+ x_k) \big), \label{eqn:dfdx1}
\end{gather}
$j=1,2, \dots, N$, and if we multiply (\ref{eqn:dfdx1}) by $\partial p^{\RN}(t, \y|s, \x)/\partial x_j$ and take summation over $j=1,2, \dots, N$ for $\RN=\BN, \CN, \DN$, they give the drift terms in (\ref{eqn:Kolm_B1})--(\ref{eqn:Kolm_D1}). Therefore, if we can choose $g^{\RN}(s)$ so that the equation
\begin{gather}
\left\{ \frac{\partial}{\partial s}+ \frac{1}{2} \sum_{j=1}^N \frac{\partial^2}{\partial x_j^2} \right\} w^{\RN}(s, \x)
- \frac{1}{w^{\RN}(s, \x)} \sum_{j=1}^N\left( \frac{\partial w^{\RN}(s, \x)}{\partial x_j} \right)^2=0 \label{eqn:eqA1}
\end{gather}
holds, then the r.h.s.\ of (\ref{eqn:main_eq}) vanishes and we can conclude that $u(s, \x)$ given in the form (\ref{eqn:usx1}) with (\ref{eqn:fsx1}) and (\ref{eqn:fsx1b}) solve the Kolmogorov equations (\ref{eqn:Kolm_B1})--(\ref{eqn:Kolm_D1}).

From (\ref{eqn:fsx1}) and (\ref{eqn:fsx1b}) with (\ref{eqn:tau_R}), we f\/ind that
\begin{gather*}
\frac{1}{w^{\RN}(s, \x)} \frac{\partial w^{\RN}(s, \x)}{\partial s}= -\frac{1}{g^{\RN}(s)} \frac{d g^{\RN}(s)}{ds}
+\alpha^{\RN} \frac{i \cN^{\RN}}{2 \pi r^2} \sum_{j=1}^N \frac{\dot{\vartheta_1}\big(c_1^{\RN} x_j/2 \pi r; \tau^{\RN}(s)\big)}
{\vartheta_1\big(c_1^{\RN} x_j/2 \pi r; \tau^{\RN}(s)\big)}\\
 \qquad{} + \frac{1}{2} \frac{i \cN^{\RN}}{2 \pi r^2} \sum_{\substack{1 \leq j, k \leq N, \cr j \not= k}}
\left\{\frac{\dot{\vartheta_1}\big((x_j-x_k)/2 \pi r; \tau^{\RN}(s)\big)}{\vartheta_1\big((x_j-x_k)/2 \pi r; \tau^{\RN}(s)\big)}
+ \frac{\dot{\vartheta_1}\big((x_j+x_k)/2 \pi r; \tau^{\RN}(s)\big)}{\vartheta_1\big((x_j+x_k)/2 \pi r; \tau^{\RN}(s)\big)} \right\},
\end{gather*}
where $\dot{\vartheta}_1(v; \tau)=\partial \vartheta_1(v; \tau)/\partial \tau$. If we use the equation (\ref{eqn:Jacobi_eq}), then the above is written as
\begin{gather*}
 \frac{1}{w^{\RN}(s, \x)} \frac{\partial w^{\RN}(s, \x)}{\partial s} = -\frac{1}{g^{\RN}(s)} \frac{d g^{\RN}(s)}{ds}
+\alpha^{\RN} \frac{\cN^{\RN}}{8 \pi^2 r^2} \sum_{j=1}^N\frac{\vartheta_1''\big(c_1^{\RN} x_j/2 \pi r; \tau^{\RN}(s)\big)}{\vartheta_1\big(c_1^{\RN} x_j/2 \pi r; \tau^{\RN}(s)\big)}\\
\qquad {}+ \frac{1}{2} \frac{\cN^{\RN}}{8 \pi^2 r^2}\sum_{\substack{1 \leq j, k \leq N, \cr j \not= k}}
\left\{\frac{\vartheta_1''\big((x_j-x_k)/2 \pi r; \tau^{\RN}(s)\big)}{\vartheta_1\big((x_j-x_k)/2 \pi r; \tau^{\RN}(s)\big)}
+ \frac{\vartheta_1''\big((x_j+x_k)/2 \pi r; \tau^{\RN}(s)\big)}{\vartheta_1\big((x_j+x_k)/2 \pi r; \tau^{\RN}(s)\big)} \right\},
\end{gather*}
where $\vartheta_1''(v; \tau)=\partial^2 \vartheta_1(v; \tau)/\partial v^2$. From (\ref{eqn:dfdx1}), we f\/ind that
\begin{gather*}
\frac{\partial^2 w^{\RN}(s, \x)}{\partial x_j^2}= - w^{\RN}(s, \x)\frac{\partial}{\partial x_j}
\Bigg\{ \alpha^{\RN} c_1^{\RN} A_{\cN^{\RN}}^{2 \pi r}\big(t_{\ast}-s, c_1^{\RN} x_j\big)\\
{} + \sum_{\substack{1 \leq k \leq N, \cr k \not=j}}
\big(A_{\cN^{\RN}}^{2 \pi r}(t_{\ast}-s, x_j-x_k)+A_{\cN^{\RN}}^{2 \pi r}(t_{\ast}-s, x_j+x_k) \big)\Bigg\}
+ \frac{1}{w^{\RN}(s, \x)} \left( \frac{\partial w^{\RN}(s, \x)}{\partial x_j} \right)^2,
\end{gather*}
and hence
\begin{gather*}
\frac{1}{2} \sum_{j=1}^N \frac{\partial^2 w^{\RN}(s, \x)}{\partial x_j^2}- \frac{1}{w^{\RN}(s, \x)} \sum_{j=1}^N
\left( \frac{\partial w^{\RN}(s, \x)}{\partial x_j} \right)^2= - \frac{1}{2} w^{\RN}(s, \x) S
\end{gather*}
with
\begin{gather*}
S =\sum_{j=1}^N \Bigg[\frac{\partial}{\partial x_j}\bigg\{ \alpha^{\RN} c_1^{\RN} A_{\cN^{\RN}}^{2 \pi r}\big(t_{\ast}-s, c_1^{\RN} x_j\big)\\
{} + \sum_{\substack{1 \leq k \leq N, \cr k \not=j}}
\big(A_{\cN^{\RN}}^{2 \pi r}(t_{\ast}-s, x_j-x_k)+A_{\cN^{\RN}}^{2 \pi r}(t_{\ast}-s, x_j+x_k) \big)\bigg\}\\
{}+ \bigg\{ \alpha^{\RN} c_1^{\RN} A_{\cN^{\RN}}^{2 \pi r}\big(t_{\ast}-s, c_1^{\RN} x_j\big)
+ \sum_{\substack{1 \leq k \leq N, \cr k \not=j}}
\big(A_{\cN^{\RN}}^{2 \pi r}(t_{\ast}-s, x_j-x_k)+A_{\cN^{\RN}}^{2 \pi r}(t_{\ast}-s, x_j+x_k) \big)\bigg\}^2\Bigg].
\end{gather*}
By def\/inition (\ref{eqn:A1}),
\begin{gather}
\frac{\partial A_{\cN^{\RN}}^{2 \pi r}\big(t_{\ast}-s, c_1^{\RN} x\big)}{\partial x}= \frac{c_1^{\RN}}{(2 \pi r)^2}
\frac{\vartheta_1''\big(c_1^{\RN} x/2 \pi r; \tau^{\RN}(s)\big)}{\vartheta_1\big(c_1^{\RN} x/2 \pi r; \tau^{\RN}(s)\big)}
-c_1^{\RN} \big(A_{\cN^{\RN}}^{2 \pi r}\big(t_{\ast}-s, c_1^{\RN} x\big)\big)^2.\label{eqn:dAdx1}
\end{gather}
Hence
\begin{gather*}
S =\alpha^{\RN} \frac{\big(c_1^{\RN}\big)^2}{(2 \pi r)^2}\sum_{j=1}^N\frac{\vartheta_1''\big(c_1^{\RN} x_j/2 \pi r; \tau^{\RN}(s)\big)}
{\vartheta_1\big(c_1^{\RN} x_j/2 \pi r; \tau^{\RN}(s)\big)}\\
\hphantom{S=}{} + \frac{1}{(2 \pi r)^2}\sum_{\substack{1 \leq j, k \leq N, \cr j \not= k}}\left\{
\frac{\vartheta_1''\big((x_j-x_k)/2 \pi r; \tau^{\RN}(s)\big)}{\vartheta_1\big((x_j-x_k)/2 \pi r; \tau^{\RN}(s)\big)}
+ \frac{\vartheta_1''\big((x_j+x_k)/2 \pi r; \tau^{\RN}(s)\big)}{\vartheta_1\big((x_j+x_k)/2 \pi r; \tau^{\RN}(s)\big)} \right\}\\
\hphantom{S=}{}+ 2 \alpha^{\RN} c_1^{\RN} \sum_{\substack{1 \leq j, k \leq N, \cr j \not= k}}
A_{\cN^{\RN}}^{2 \pi r}\big(t_{\ast}-s, c_1^{\RN} x_j\big)\big(A_{\cN^{\RN}}^{2 \pi r}(t_{\ast}-s, x_j-x_k)+A_{\cN^{\RN}}^{2 \pi r}(t_{\ast}-s, x_j+x_k) \big)\\
\hphantom{S=}{}+ \sum_{\substack{ 1 \leq j, k, \ell \leq N, \cr j \not=k \not=\ell \not= j}}
\big(A_{\cN^{\RN}}^{2 \pi r}(t_{\ast}-s, x_j-x_k)+A_{\cN^{\RN}}^{2 \pi r}(t_{\ast}-s, x_j+x_k) \big)\\
\hphantom{S=}{} \times \big(A_{\cN^{\RN}}^{2 \pi r}(t_{\ast}-s, x_j-x_{\ell})+A_{\cN^{\RN}}^{2 \pi r}(t_{\ast}-s, x_j+x_{\ell}) \big),
\end{gather*}
where we have used the fact
\begin{gather*}
\sum_{\substack{ 1 \leq j, k \leq N, \cr j \not=k}}A_{\cN^{\RN}}^{2 \pi r}(t_{\ast}-s, x_j-x_k)A_{\cN^{\RN}}^{2 \pi r}(t_{\ast}-s, x_j+x_{k})=0
\end{gather*}
concluded from (\ref{eqn:A_odd}). Then (\ref{eqn:eqA1}) holds, if
\begin{gather}
-\frac{1}{g^{\RN}(s)} \frac{d g^{\RN}(s)}{ds} +\alpha^{\RN} \frac{\cN^{\RN}-\big(c_1^{\RN}\big)^2}{8 \pi^2 r^2}\sum_{j=1}^N
\frac{\vartheta_1''\big(c_1^{\RN} x_j/2 \pi r; \tau^{\RN}(s)\big)}{\vartheta_1\big(c_1^{\RN} x_j/2 \pi r; \tau^{\RN}(s)\big)}\nonumber\\
 \qquad{} + \frac{\cN^{\RN}-2}{16 \pi^2 r^2} \sum_{\substack{1 \leq j, k \leq N, \cr j \not= k}}\left\{
\frac{\vartheta_1''\big((x_j-x_k)/2 \pi r; \tau^{\RN}(s)\big)}{\vartheta_1\big((x_j-x_k)/2 \pi r; \tau^{\RN}(s)\big)}+ \frac{\vartheta_1''\big((x_j+x_k)/2 \pi r; \tau^{\RN}(s)\big)}{\vartheta_1\big((x_j+x_k)/2 \pi r; \tau^{\RN}(s)\big)} \right\}\nonumber\\
 \qquad{} - \alpha^{\RN} c_1^{\RN} \sum_{\substack{1 \leq j, k \leq N, \cr j \not= k}}
A_{\cN^{\RN}}^{2 \pi r}\big(t_{\ast}-s, c_1^{\RN} x_j\big) \big(A_{\cN^{\RN}}^{2 \pi r}(t_{\ast}-s, x_j-x_k)+A_{\cN^{\RN}}^{2 \pi r}(t_{\ast}-s, x_j+x_k) \big)\nonumber\\
 \qquad {}-\frac{1}{2} \sum_{\substack{ 1 \leq j, k, \ell \leq N, \cr j \not=k \not=\ell \not= j}}
\big(A_{\cN^{\RN}}^{2 \pi r}(t_{\ast}-s, x_j-x_k)+A_{\cN^{\RN}}^{2 \pi r}(t_{\ast}-s, x_j+x_k) \big)\nonumber\\
\qquad{} \times \big(A_{\cN^{\RN}}^{2 \pi r}(t_{\ast}-s, x_j-x_{\ell})+A_{\cN^{\RN}}^{2 \pi r}(t_{\ast}-s, x_j+x_{\ell}) \big)=0.\label{eqn:eqA2}
\end{gather}

Now we use the following expressions for $A_{\cN^{\RN}}^{2 \pi r}$ and its spatial derivative \cite{Kat15,Kat16}
\begin{gather}
A_{\cN^{\RN}}^{2 \pi r}\big(t_{\ast}-s, c_1^{\RN} x\big)= \zeta_{\cN^{\RN}}\big(t_{\ast}-s, c_1^{\RN} x\big) - c_1^{\RN} \frac{\eta_{\cN^{\RN}}^1(t_{\ast}-s) x}{\pi r},\nonumber\\
\frac{\partial A_{\cN^{\RN}}^{2 \pi r}\big(t_{\ast}-s, c_1^{\RN} x\big)}{\partial x}= -c_1^{\RN} \wp_{\cN^{\RN}}\big(t_{\ast}-s, c_1^{\RN} x\big)-c_1^{\RN} \frac{\eta_{\cN^{\RN}}^1(t_{\ast}-s)}{\pi r}.\label{eqn:A_B1}
\end{gather}
Here $\eta_{\cN^{\RN}}^1(t_{\ast}-s)$ is given by (\ref{eqn:eta1_1}) and we put
\begin{gather}
\zeta_{\cN^{\RN}}(t_{\ast}-s, x)= \zeta(x | 2 \omega_1, 2 \omega_3) \big|_{\omega_1=\pi r, \omega_3= i \cN^{\RN}(t_{\ast}-s)/2r},\nonumber\\
\wp_{\cN^{\RN}}(t_{\ast}-s, x)= \wp(x | 2 \omega_1, 2 \omega_3) \big|_{\omega_1=\pi r, \omega_3=i \cN^{\RN}(t_{\ast}-s)/2r},\label{eqn:Weierstrass1}
\end{gather}
where the Weierstrass $\wp$ function and zeta function $\zeta$ are def\/ined by~(\ref{eqn:wp_zeta}) in Appendix~\ref{sec:appendixA_5}. Applying~(\ref{eqn:A_B1}) to~(\ref{eqn:dAdx1}) gives
\begin{gather}
\frac{1}{(2 \pi r)^2}\frac{\vartheta_1''\big(c_1^{\RN} x/2 \pi r; \tau^{\RN}(s)\big)}{\vartheta_1\big(c_1^{\RN} x/2\pi r; \tau^{\RN}(s)\big)}
= \left( \zeta_{\cN^{\RN}}\big(t_{\ast}-s, c_1^{\RN} x\big) - c_1^{\RN} \frac{\eta_{\cN^{\RN}}^1(t_{\ast}-s) x}{\pi r} \right)^2\nonumber\\
\hphantom{\frac{1}{(2 \pi r)^2}\frac{\vartheta_1''\big(c_1^{\RN} x/2 \pi r; \tau^{\RN}(s)\big)}{\vartheta_1\big(c_1^{\RN} x/2\pi r; \tau^{\RN}(s)\big)}=}{}
- \wp_{\cN^{\RN}}\big(t_{\ast}-s, c_1^{\RN} x\big)-\frac{\eta_{\cN^{\RN}}^1(t_{\ast}-s)}{\pi r}.\label{eqn:A_B2}
\end{gather}
Using (\ref{eqn:A_B1}) and (\ref{eqn:A_B2}), the l.h.s.\ of (\ref{eqn:eqA2}) can be written using $\zeta_{\cN^{\RN}}, \wp_{\cN^{\RN}}$ and $\eta_{\cN^{\RN}}^1$. Moreover, from the functional equation~(\ref{eqn:zeta_wp}) given in Appendix~\ref{sec:appendixA_5}, we can derive the following equalities
\begin{gather}
\sum_{\substack{1 \leq j, k, \ell \leq N, \cr j \not= k \not= \ell \not=j}}
(\zeta_{\cN^{\RN}}(t_{\ast}-s, x_j-x_k)+ \zeta_{\cN^{\RN}}(t_{\ast}-s, x_j+x_k))\nonumber\\
\qquad\quad{} \times (\zeta_{\cN^{\RN}}(t_{\ast}-s, x_j-x_{\ell})+ \zeta_{\cN^{\RN}}(t_{\ast}-s, x_j+x_{\ell}))\nonumber\\
\qquad{} =(N-2) \sum_{\substack{1 \leq j, k \leq N, \cr j \not= k}}
\big( \zeta_{\cN^{\RN}}(t_{\ast}-s, x_j-x_k)^2 + \zeta_{\cN^{\RN}}(t_{\ast}-s, x_j+x_k)^2 \big)\nonumber\\
 \qquad\quad{} - (N-2) \sum_{\substack{1 \leq j, k \leq N, \cr j \not= k}}
( \wp_{\cN^{\RN}}(t_{\ast}-s, x_j-x_k) + \wp_{\cN^{\RN}}(t_{\ast}-s, x_j+x_k) ),\label{eqn:identities1}\\
\sum_{\substack{1 \leq j, k \leq N, \cr j \not= k}}
\zeta_{\cN^{\RN}}(t_{\ast}-s, x_j) (\zeta_{\cN^{\RN}}(t_{\ast}-s, x_j -x_k)+ \zeta_{\cN^{\RN}}(t_{\ast}-s, x_j + x_k))\nonumber\\
\qquad{} = \frac{1}{4} \sum_{\substack{1 \leq j, k \leq N, \cr j \not= k}}
\big(\zeta_{\cN^{\RN}}(t_{\ast}-s, x_j - x_k)^2+\zeta_{\cN^{\RN}}(t_{\ast}-s, x_j + x_k)^2\big)\nonumber\\
\qquad\quad{} - \frac{1}{4} \sum_{\substack{1 \leq j, k \leq N, \cr j \not= k}}
(\wp_{\cN^{\RN}}(t_{\ast}-s, x_j - x_k) + \wp_{\cN^{\RN}}(t_{\ast}-s, x_j + x_k))\nonumber\\
 \qquad\quad{} + (N-1) \sum_{j=1}^N \zeta_{\cN^{\RN}}(t_{\ast}-s, x_j)^2- (N-1) \sum_{j=1}^N \wp_{\cN^{\RN}}(t_{\ast}-s, x_j),\label{eqn:identities1b}
\end{gather}
and
\begin{gather}
\sum_{\substack{1 \leq j, k \leq N, \cr j \not=k}} \zeta_{\cN^{\RN}}(t_{\ast}-s, 2 x_j) (\zeta_{\cN^{\RN}}(t_{\ast}-s, x_j -x_k)+ \zeta_{\cN^{\RN}}(t_{\ast}-s, x_j + x_k))\nonumber\\
 \qquad {} = \frac{1}{2} \sum_{\substack{1 \leq j, k \leq N, \cr j \not= k}}
\big(\zeta_{\cN^{\RN}}(t_{\ast}-s, x_j - x_k)^2+\zeta_{\cN^{\RN}}(t_{\ast}-s, x_j + x_k)^2\big)\nonumber\\
\qquad\quad{} - \frac{1}{2} \sum_{\substack{1 \leq j, k \leq N, \cr j \not= k}}
(\wp_{\cN^{\RN}}(t_{\ast}-s, x_j - x_k) + \wp_{\cN^{\RN}}(t_{\ast}-s, x_j + x_k))\nonumber\\
\qquad \quad{} + \frac{N-1}{2} \sum_{j=1}^N \zeta_{\cN^{\RN}}(t_{\ast}-s, 2 x_j)^2- \frac{N-1}{2} \sum_{j=1}^N \wp_{\cN^{\RN}}(t_{\ast}-s, 2 x_j).
\label{eqn:identities1c}
\end{gather}
Since $\eta_{\cN^{\RN}}(t_{\ast}-s, -x)=-\eta_{\cN^{\RN}}(t_{\ast}-s, x)$, we have
\begin{gather}
\sum_{\substack{1 \leq j, k, \ell \leq N, \cr j \not= k \not= \ell \not=j}}
(\zeta_{\cN^{\RN}}(t_{\ast}-s, x_j-x_k)+ \zeta_{\cN^{\RN}}(t_{\ast}-s, x_j+x_k))\{ (x_j-x_k)+(x_j+x_{\ell}) \}\nonumber\\
{} =(N-2) \sum_{\substack{1 \leq j, k \leq N, \cr j \not= k}}
\{ \zeta_{\cN^{\RN}}(t_{\ast}-s, x_j-x_k)(x_j-x_k) + \zeta_{\cN^{\RN}}(t_{\ast}-s, x_j+x_k)(x_j+x_k) \},\!\!\!\!\label{eqn:identities2}
\end{gather}
and
\begin{gather}
\sum_{\substack{1 \leq j, k, \ell \leq N, \cr j \not= k \not= \ell \not=j}}\{ (x_j-x_k) (x_j-x_{\ell}) + (x_j+x_k) (x_j+x_{\ell}) \}\nonumber\\
\qquad{} = (N-2) \sum_{\substack{1 \leq j, k \leq N, \cr j \not= k}} \big\{ (x_j-x_k)^2 + (x_j+x_k)^2 \big\}
= 4 (N-1) (N-2) \sum_{j=1}^N x_j^2. \label{eqn:identities3}
\end{gather}

Using the above formulas and the values of $\cN^{\RN}$ given by (\ref{eqn:cN_R}), we can show that (\ref{eqn:eqA2}) is reduced to the following simple equations
\begin{gather}
\frac{d \log g^{\BN}(s)}{ds}= - N(N-1)(2N-1) \frac{\eta_{2N-1}^1(t_{\ast}-s)}{2 \pi r},\nonumber\\
\frac{d \log g^{\CN}(s)}{ds}= - N(N^2-1) \frac{\eta_{2(N+1)}^1(t_{\ast}-s)}{\pi r},\nonumber\\
\frac{d \log g^{\DN}(s)}{ds}= - N(N-1)(N-2) \frac{\eta_{2(N-1)}^1(t_{\ast}-s)}{\pi r}.\label{eqn:g_RN_1}
\end{gather}
They give the conditions for $g^{\RN}(s)$ so that (\ref{eqn:eqA1}) holds. Since (\ref{eqn:Dedekind_eta}) gives for $\RN=\BN, \CN, \DN$
\begin{gather*}
\frac{d \log \eta\big(\tau^{\BN}(s)\big)}{ds}= (2N-1) \frac{\eta_{2N-1}^1(t_{\ast}-s)}{2 \pi r},\\
\frac{d \log \eta\big(\tau^{\CN}(s)\big)}{ds}= (N+1) \frac{\eta_{2(N+1)}^1(t_{\ast}-s)}{\pi r},\\
\frac{d \log \eta\big(\tau^{\DN}(s)\big)}{ds}= (N-1) \frac{\eta_{2(N-1)}^1(t_{\ast}-s)}{\pi r},
\end{gather*}
we f\/ind that
\begin{gather*}
g^{\BN}(s) = c^{\BN} \eta\big(\tau^{\BN}(s)\big)^{-N(N-1)},\nonumber\\
g^{\CN}(s) = c^{\CN} \eta\big(\tau^{\CN}(s)\big)^{-N(N-1)},\nonumber\\
g^{\DN}(s) = c^{\DN} \eta\big(\tau^{\DN}(s)\big)^{-N(N-2)},\nonumber
\end{gather*}
with constants $c^{\RN}$, $\RN=\BN, \CN, \DN$. By (\ref{eqn:qN3}), the conditions (\ref{eqn:Kolm_initial}) determine the constants.

In conclusion, under the condition (\ref{eqn:Kolm_initial}),
\begin{gather*}
 p^{\BN}(t, \y|s, \x)= \left( \frac{\eta\big(\tau^{\BN}(t)\big)}{\eta\big(\tau^{\BN}(s)\big)} \right)^{-N(N-1)}
\prod_{\ell=1}^N \frac{\vartheta_1\big(y_{\ell}/2 \pi r; \tau^{\BN}(t)\big)}{\vartheta_1\big(x_{\ell}/2 \pi r; \tau^{\BN}(s)\big)}\\
 \qquad{} \times \prod_{1 \leq j < k \leq N} \frac{\vartheta_1\big((y_k-y_j)/2 \pi r; \tau^{\BN}(t)\big)}
{\vartheta_1\big((x_k-x_j)/2 \pi r; \tau^{\BN}(s)\big)}\frac{\vartheta_1\big((y_k+y_j)/2 \pi r; \tau^{\BN}(t)\big)}
{\vartheta_1\big((x_k+x_j)/2 \pi r; \tau^{\BN}(s)\big)}q^{[0, \pi r]}_{\rm ar}(t-s, \y|\x)
\end{gather*}
solves (\ref{eqn:Kolm_B1}),
\begin{gather*} p^{\CN}(t, \y|s, \x) = \left( \frac{\eta\big(\tau^{\CN}(t)\big)}{\eta\big(\tau^{\CN}(s)\big)} \right)^{-N(N-1)}
\prod_{\ell=1}^N \frac{\vartheta_1\big(y_{\ell}/\pi r; \tau^{\CN}(t)\big)}{\vartheta_1\big(x_{\ell}/\pi r; \tau^{\CN}(s)\big)} \\
\qquad{}\times \prod_{1 \leq j < k \leq N} \frac{\vartheta_1\big((y_k-y_j)/2 \pi r; \tau^{\CN}(t)\big)} {\vartheta_1\big((x_k-x_j)/2 \pi r; \tau^{\CN}(s)\big)}
\frac{\vartheta_1\big((y_k+y_j)/2 \pi r; \tau^{\CN}(t)\big)} {\vartheta_1\big((x_k+x_j)/2 \pi r; \tau^{\CN}(s)\big)} q^{[0, \pi r]}_{\rm aa}(t-s, \y|\x)
\end{gather*}
solves (\ref{eqn:Kolm_C1}), and
\begin{gather*}
 p^{\DN}(t, \y|s, \x) = \left( \frac{\eta\big(\tau^{\DN}(t)\big)}{\eta\big(\tau^{\DN}(s)\big)} \right)^{-N(N-2)}\\
 \qquad {} \times \prod_{1 \leq j < k \leq N} \frac{\vartheta_1\big((y_k-y_j)/2 \pi r; \tau^{\DN}(t)\big)}
{\vartheta_1\big((x_k-x_j)/2 \pi r; \tau^{\DN}(s)\big)} \frac{\vartheta_1\big((y_k+y_j)/2 \pi r; \tau^{\DN}(t)\big)}
{\vartheta_1\big((x_k+x_j)/2 \pi r; \tau^{\DN}(s)\big)} q^{[0, \pi r]}_{\rm rr}(t-s, \y|\x)
\end{gather*}
solves (\ref{eqn:Kolm_D1}). The proof is thus completed.
\end{proof}

Now we prove Theorem \ref{thm:main2}.
\begin{proof} \looseness=-1 It is obvious that the backward Kolmogorov equations (\ref{eqn:Kolm_B1})--(\ref{eqn:Kolm_D1}) correspond to the~sys\-tems of SDEs (\ref{eqn:SDE_B1})--(\ref{eqn:SDE_D1}), respectively \cite{Kat16_Springer, RY99}. Due to the behavior (\ref{eqn:A_boundary}) of $A^{2 \pi r}_{\cN}$, we can show that \cite{Kat16_Springer, RY99} particles in $[0, \pi r]$ following (\ref{eqn:SDE_B1}) do not arrive at~0, and those fol\-lo\-wing~(\ref{eqn:SDE_C1}) do~not arrive at~0 nor~$\pi r$ with probability one, and thus we do not need to impose any boundary condition at these endpoints of $[0, \pi r]$ for these systems of SDEs. Hence the proof is completed.
\end{proof}

\section[Relaxation to equilibrium processes in trigonometric Dyson models of types C and D]{Relaxation to equilibrium processes\\ in trigonometric Dyson models of types C and D} \label{sec:relax}

As a corollary of Theorems \ref{thm:main1} and \ref{thm:main2}, we obtain the following trigonometric determinantal processes by taking the temporally homogeneous limit $t_{\ast} \to \infty$.

\begin{Corollary} \label{thm:tDYS_C_D} Assume that $\u \in \W^{(0, \pi r)}$. Then the trigonometric Dyson models of types~C and~D given by~\eqref{eqn:SDE_C2} and~\eqref{eqn:SDE_D2}, respectively, are determinantal with the spatio-temporal correlation kernels
\begin{gather}
\check{\bK}^{\CN}(s,x;t,y)= \sum_{j=1}^N p^{[0, \pi r]}_{\rm aa}(s, x|u_j) \check{M}^{\CN}_{\u, u_j}(t, y)-{\bf 1}(s>t) p^{[0, \pi r]}_{\rm aa}(s-t, x|y),\nonumber\\
\check{\bK}^{\DN}(s,x;t,y)= \sum_{j=1}^N p^{[0, \pi r]}_{\rm rr}(s, x|u_j) \check{M}^{\DN}_{\u, u_j}(t, y)-{\bf 1}(s>t) p^{[0, \pi r]}_{\rm rr}(s-t, x|y),\label{eqn:K_check_C_D}
\end{gather}
$(s, x), (t, y) \in [0, \infty) \times [0, \pi r]$, where
\begin{gather*}
\check{M}^{\RN}_{\u, u_j}(t, x)= \widetilde{\rE}\big[ \check{\Phi}^{\RN}_{\u, u_j}\big(x+i \widetilde{B}(t)\big)\big],\qquad \RN=\CN, \DN
\end{gather*}
with
\begin{gather}
\check{\Phi}^{\CN}_{\u, u_j}(z)= \frac{\sin(z/r)}{\sin(u_j/r)}
\prod_{\substack{1 \leq \ell \leq N, \cr \ell \not=j}}\frac{\sin((z-u_{\ell})/2r)}{\sin((u_j-u_{\ell})/2r)}
\frac{\sin((z+u_{\ell})/2r)}{\sin((u_j+u_{\ell})/2r)},\nonumber\\
\check{\Phi}^{\DN}_{\u, u_j}(z)=\prod_{\substack{1 \leq \ell \leq N, \cr \ell \not=j}}\frac{\sin((z-u_{\ell})/2r)}{\sin((u_j-u_{\ell})/2r)}\frac{\sin((z+u_{\ell})/2r)}{\sin((u_j+u_{\ell})/2r)},
\qquad j =1,2, \dots, N.\label{eqn:Phi_check_C_D}
\end{gather}
\end{Corollary}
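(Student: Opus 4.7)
The plan is to derive Corollary \ref{thm:tDYS_C_D} as the temporally homogeneous limit $t_\ast \to \infty$ of Theorems \ref{thm:main1} and \ref{thm:main2} specialized to $\RN = \CN$ and $\RN = \DN$. By \eqref{eqn:tau_R}, $\Im \tau^{\RN}(t) = \cN^{\RN}(t_\ast - t)/(2\pi r^2) \to \infty$ as $t_\ast \to \infty$ for each fixed $t \in [0, \infty)$, so the elliptic theta-ratios appearing in Theorem \ref{thm:main1} degenerate to trigonometric ratios through the standard asymptotic $\vartheta_1(v;\tau) \sim 2 e^{i\pi \tau/4} \sin(\pi v)$ as $\Im \tau \to \infty$. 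At the SDE level, the discussion preceding the corollary already records $\lim_{t_\ast \to \infty} A^{2\pi r}_{\cN}(t_\ast - t, x) = (1/(2r)) \cot(x/(2r))$, so the drifts in \eqref{eqn:SDE_C1} and \eqref{eqn:SDE_D1} converge term-by-term to those of \eqref{eqn:SDE_C2} and \eqref{eqn:SDE_D2}, and the trigonometric systems are realized in $[0, \pi r]$ with the absorbing-absorbing and reflecting-reflecting boundary conditions inherited from Theorem \ref{thm:main2}.

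For the correlation kernels, the boundary-conditioned transition densities $p^{[0, \pi r]}_{\rm aa}$ and $p^{[0, \pi r]}_{\rm rr}$ entering \eqref{eqn:K1} do not involve $t_\ast$, so the task reduces to passing to the limit in the static entire functions $\Phi^{\RN}_{\u, u_j}$ and hence in the martingale functions $M^{\RN}_{\u, u_j}$ of \eqref{eqn:MB1}. For $\RN = \CN$ with $c_1^{\CN} = 2$ and $c_2^{\CN} = 1$, each theta quotient $\vartheta_1(v; \tau^{\CN}(0))/\vartheta_1(v'; \tau^{\CN}(0))$ in \eqref{eqn:Phi_Ra} tends to $\sin(\pi v)/\sin(\pi v')$ since the common $q$-prefactors cancel; evaluated at the arguments appearing there, this yields the boundary factor $\sin(z/r)/\sin(u_j/r)$ and the off-diagonal sine ratios of \eqref{eqn:Phi_check_C_D}. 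For $\RN = \DN$ the boundary factor is absent and the analogous reduction recovers $\check{\Phi}^{\DN}_{\u, u_j}$.

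The main technical obstacle is justifying the interchange of the limit $t_\ast \to \infty$ with the expectation $\widetilde{\rE}[\,\cdot\,]$ in \eqref{eqn:MB1} and, subsequently, with the Fredholm expansion \eqref{eqn:Fredholm}. The limit integrands $\check{\Phi}^{\RN}_{\u, u_j}(y + i \widetilde x)$ are finite products of sines and therefore grow like $\exp(c_N |\widetilde x|/r)$ for some $N$-dependent constant, so the Gaussian weight $p_{\rm BM}(t, \widetilde x | 0)$ dominates and $\check{M}^{\RN}_{\u, u_j}(t, y)$ is well defined. Uniform-in-$t_\ast$ bounds of the same form for the elliptic $\Phi^{\RN}_{\u, u_j}(y + i \widetilde x)$, obtained by estimating each theta quotient through its infinite product using $|q(\tau^{\RN}(0))| < 1$, then allow dominated convergence to carry the limit through the $\widetilde{\rE}$-integral and through each stratum $\prod_m \W_{N_m}$ of the Fredholm expansion, identifying the trigonometric Dyson models of types C and D as determinantal with the kernels \eqref{eqn:K_check_C_D}.
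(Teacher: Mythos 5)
Your proposal is correct and follows essentially the same route as the paper, which simply obtains the corollary by letting $t_{\ast}\to\infty$ in Theorems \ref{thm:main1} and \ref{thm:main2} and invoking the degeneration $\vartheta_1(v;\tau)\sim 2e^{\tau\pi i/4}\sin(\pi v)$ of \eqref{eqn:theta_asym} (together with $A^{2\pi r}_{\cN}(t_{\ast}-t,x)\to(1/2r)\cot(x/2r)$ for the SDEs). Your added discussion of dominated convergence for the $\widetilde{\rE}$-integral and the Fredholm expansion supplies detail the paper leaves implicit; the only small imprecision is that the elliptic $\Phi^{\RN}_{\u,u_j}(y+i\widetilde{x})$ grow Gaussianly in $\widetilde{x}$ (with a coefficient of order $1/(\cN^{\RN}t_{\ast})$) rather than exponentially, but for $t_{\ast}$ bounded below by a suitable $T_0>t/\cN^{\RN}$ this still yields an integrable dominating function.
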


We f\/ind that in the limit $t_{\ast} \to \infty$, the equality (\ref{eqn:factor1b}) with $\RN=\CN$ and $\DN$ gives
\begin{gather}
\det_{1 \leq j, k \leq N} \big[ \check{f}^{\CN}_j(u_k)\big]= (-1)^{N(N-1)/2} 2^{N(N-1)}\prod_{\ell=1}^N \sin \left( \frac{u_{\ell}}{r} \right)\nonumber\\
\hphantom{\det_{1 \leq j, k \leq N} \big[ \check{f}^{\CN}_j(u_k)\big]=}{} \times\prod_{1 \leq j < k \leq N}\sin \left( \frac{u_k-u_j}{2r} \right)\sin \left( \frac{u_k+u_j}{2r} \right),\nonumber\\
\det_{1 \leq j, k \leq N} \big[ \check{f}^{\DN}_j(u_k)\big]= (-1)^{N(N-1)/2} 2^{(N-1)^2}
\prod_{1 \leq j < k \leq N}\sin \left( \frac{u_k-u_j}{2r} \right)\sin \left( \frac{u_k+u_j}{2r} \right)\label{eqn:det_f_tri}
\end{gather}
with
\begin{gather}
\check{f}_j^{\CN}(z)=\sin \left( \frac{jz}{r}\right),\qquad\check{f}_j^{\DN}(z)= \cos \left( \frac{(j-1)z}{r} \right), \qquad j \in \Z. \label{eqn:f:tri}
\end{gather}
By Lemma \ref{thm:det_lemma_1}, we see that (\ref{eqn:Phi_check_C_D}) can be expanded as
\begin{gather*}
\check{\Phi}^{\RN}_{\u, u_j}(z)=\sum_{k=1}^N \check{\phi}^{\RN}_{\u, u_j}(k) \check{f}^{\RN}_k(z), \qquad \RN=\CN, \DN,
\end{gather*}
and
\begin{gather}
\sum_{\ell=1}^N \check{f}^{\RN}_j(u_{\ell}) \check{\phi}^{\RN}_{\u, u_{\ell}}(k)=\delta_{j k},\qquad j, k \in \{1,2, \dots, N \}.\label{eqn:relax1}
\end{gather}
We note that the transition probability densities (\ref{eqn:p_abs}) and (\ref{eqn:p_ref}) are written as~(\ref{eqn:p_abs_A}) \linebreak and~(\ref{eqn:p_ref_A}), respectively. Hence we have
\begin{gather}
p^{[0, \pi r]}_{\rm aa}(t, y|x)= \frac{1}{\pi r} \sum_{n \in \Z} e^{-n^2 t/2r^2}\check{f}^{\CN}_n(y) \check{f}^{\CN}_n(x)
= \frac{2}{\pi r} \sum_{\ell=1}^{\infty} e^{-\ell^2 t/2r^2}\check{f}^{\CN}_{\ell}(y) \check{f}^{\CN}_{\ell}(x),\nonumber\\
p^{[0, \pi r]}_{\rm rr}(t, y|x)= \frac{1}{\pi r}\left( \check{f}^{\DN}_1(y) \check{f}^{\DN}_1(x)
+ 2 \sum_{\ell=2}^{\infty} e^{-(\ell-1)^2 t/2 r^2}\check{f}^{\DN}_{\ell}(y) \check{f}^{\DN}_{\ell}(x) \right).\label{eqn:p_tri}
\end{gather}
Now we show relaxation processes to equilibria, which are typical non-equilibrium phenomena.

\begin{Proposition}\label{thm:relax_tDYS_C_D} For any initial configuration $\u \in \W^{(0, \pi r)}$, the trigonometric Dyson models of types C and D
exhibit relaxation to the equilibrium processes. The equilibrium processes are determinantal with the correlation kernels
\begin{gather}
\check{\bK}^{\CN}_{\rm eq}(t-s, x, y)\nonumber\\
\qquad {}= \begin{cases}
\displaystyle \frac{1}{\pi r}\sum_{n \colon |n| \leq N}e^{n^2(t-s)/2r^2} \sin \left( \frac{nx}{r} \right)\sin \left( \frac{ny}{r} \right),
& \text{if $t > s$},\vspace{1mm}\\
\displaystyle \frac{1}{2 \pi r} \left[\frac{\sin\{(2N+1)(y-x)/2r\}}{\sin\{(y-x)/2r \}}- \frac{\sin\{(2N+1)(y+x)/2r\}}{\sin\{(y+x)/2r \}} \right],
& \text{if $t=s$},\vspace{1mm}\\
\displaystyle - \frac{1}{\pi r}\sum_{n \colon |n| \geq N+1}e^{n^2(t-s)/2r^2} \sin \left( \frac{nx}{r} \right)\sin \left( \frac{ny}{r} \right),
& \text{if $t < s$},
\end{cases}\label{eqn:K_eq_C}
\end{gather}
and
\begin{gather}
\check{\bK}^{\DN}_{\rm eq}(t-s, x, y)\nonumber\\
\qquad {}= \begin{cases}
\displaystyle \frac{1}{\pi r}\sum_{n \colon |n| \leq N-1}e^{n^2(t-s)/2r^2} \cos \left( \frac{nx}{r} \right)\cos \left( \frac{ny}{r} \right),
& \text{if $t > s$},\vspace{1mm}\\
\displaystyle \frac{1}{2 \pi r} \left[\frac{\sin\{(2N-1)(y-x)/2r\}}{\sin\{(y-x)/2r \}}+ \frac{\sin\{(2N-1)(y+x)/2r\}}{\sin\{(y+x)/2r \}} \right],
& \text{if $t=s$},\vspace{1mm}\\
\displaystyle - \frac{1}{\pi r}\sum_{n \colon |n| \geq N} e^{n^2(t-s)/2r^2} \cos \left( \frac{nx}{r} \right)\cos \left( \frac{ny}{r} \right),
& \text{if $t < s$},
\end{cases}\label{eqn:K_eq_D}
\end{gather}
respectively.
\end{Proposition}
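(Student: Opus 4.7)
The plan is to compute the limit of the kernel (\ref{eqn:K_check_C_D}) from Corollary~\ref{thm:tDYS_C_D} under the time shift $(s,t)\mapsto(s+T,t+T)$ with $T\to\infty$, and match it term by term with the three-case closed forms (\ref{eqn:K_eq_C}) and (\ref{eqn:K_eq_D}). The ingredients are the spectral expansions (\ref{eqn:p_tri}) of $p^{[0,\pi r]}_{\rm aa}$ and $p^{[0,\pi r]}_{\rm rr}$ in the bases $\{\check{f}^{\CN}_\ell\}$ and $\{\check{f}^{\DN}_\ell\}$ of (\ref{eqn:f:tri}), together with the expansion $\check{\Phi}^{\RN}_{\u,u_j}(z)=\sum_{k=1}^N\check{\phi}^{\RN}_{\u,u_j}(k)\check{f}^{\RN}_k(z)$ given by Lemma~\ref{thm:det_lemma_1}, whose coefficients satisfy the finite biorthogonality $\sum_{j=1}^N\check{f}^{\RN}_\ell(u_j)\check{\phi}^{\RN}_{\u,u_j}(k)=\delta_{\ell k}$ for $\ell,k\in\{1,\dots,N\}$ as in (\ref{eqn:relax1}).

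First I would evaluate $\check{M}^{\RN}_{\u,u_j}(t,y)=\widetilde{\rE}[\check{\Phi}^{\RN}_{\u,u_j}(y+i\widetilde{B}(t))]$ term by term from the elementary complex-Gaussian identities
\[
\widetilde{\rE}[\sin(k(y+i\widetilde{B}(t))/r)]=e^{k^2t/(2r^2)}\sin(ky/r),\qquad
\widetilde{\rE}[\cos(k(y+i\widetilde{B}(t))/r)]=e^{k^2t/(2r^2)}\cos(ky/r),
\]
obtaining $\check{M}^{\CN}_{\u,u_j}(t,y)=\sum_{k=1}^N\check{\phi}^{\CN}_{\u,u_j}(k)\,e^{k^2t/(2r^2)}\sin(ky/r)$, and the analog for $\DN$ with $\cos((k-1)y/r)$ and exponent $(k-1)^2t/(2r^2)$.

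The core step is then to substitute these expressions and the spectral expansion of $p^{[0,\pi r]}$ into the first summand of (\ref{eqn:K_check_C_D}), replace $(s,t)$ by $(s+T,t+T)$, and let $T\to\infty$. The result is a double sum over $\ell\ge 1$ and $1\le k\le N$ whose time factor is $e^{-\ell^2(s+T)/(2r^2)}e^{k^2(t+T)/(2r^2)}$ and whose coefficient is $\sum_j\check{f}^{\RN}_\ell(u_j)\check{\phi}^{\RN}_{\u,u_j}(k)$, multiplied by the appropriate trigonometric factors in $x$ and $y$. For $\ell\le N$ the orthogonality (\ref{eqn:relax1}) collapses this coefficient to $\delta_{\ell k}$ and all $T$-dependence cancels in the surviving terms, leaving the finite sum $(2/\pi r)\sum_{k=1}^N e^{k^2(t-s)/(2r^2)}\sin(kx/r)\sin(ky/r)$ for $\CN$, and the obvious analog for $\DN$; these match the $t>s$ branches of (\ref{eqn:K_eq_C}) and (\ref{eqn:K_eq_D}) after using oddness of $\sin$ (resp.\ evenness of $\cos$) in the index $n$. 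For $\ell\ge N+1$ one has $k^2-\ell^2<0$ uniformly in $k\in\{1,\dots,N\}$, while by (\ref{eqn:fPhi1}) the coefficient is a ratio of fixed $N\times N$ determinants whose entries are uniformly bounded trigonometric values, so dominated convergence forces the entire $\ell>N$ contribution to vanish in the limit.

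For $t<s$ the indicator term $-p^{[0,\pi r]}(s-t,x|y)$ is $T$-independent; its spectral expansion exactly cancels the $|n|\le N$ (resp.\ $|n|\le N-1$) block produced by the first sum and leaves the claimed tail over $|n|\ge N+1$ (resp.\ $|n|\ge N$). For $s=t$ the closed forms follow from the Dirichlet-kernel identity $1+2\sum_{n=1}^M\cos(n\alpha)=\sin((2M+1)\alpha/2)/\sin(\alpha/2)$ applied to $2\sin(kx/r)\sin(ky/r)=\cos(k(x-y)/r)-\cos(k(x+y)/r)$ for $\CN$ and $2\cos(kx/r)\cos(ky/r)=\cos(k(x-y)/r)+\cos(k(x+y)/r)$ for $\DN$, with the parities of $\sin$ and $\cos$ converting the resulting ratios to the displayed form. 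The only real obstacle is the uniform-in-$\ell$ control of the coefficients $\sum_j\check{f}^{\RN}_\ell(u_j)\check{\phi}^{\RN}_{\u,u_j}(k)$ for $\ell>N$ that legitimises the interchange of $\lim_{T\to\infty}$ with $\sum_{\ell\ge 1}$; since (\ref{eqn:fPhi1}) exhibits these as ratios of $N\times N$ determinants of trigonometric entries bounded by $1$, the bound is $\u$-dependent but $\ell$-independent, and the Gaussian factor $e^{-\ell^2T/(2r^2)}$ then supplies the summable decay needed to close the argument.
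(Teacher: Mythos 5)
Your proposal is correct and follows essentially the same route as the paper: expand $\check{M}^{\RN}_{\u,u_j}$ in the bases (\ref{eqn:f:tri}), split the sum over $\ell$ at $N$ using the biorthogonality (\ref{eqn:relax1}) to isolate the equilibrium part, and kill the $\ell\ge N+1$ remainder under the time shift $T\to\infty$. Your added justification of the limit–sum interchange via the uniform $\ell$-independent bound on the determinant-ratio coefficients, and the explicit Dirichlet-kernel verification of the equal-time branch, merely fill in steps the paper declares obvious.
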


\begin{proof} Here we prove the convergence of correlation kernels. It implies the convergence of the generating functions of spatio-temporal correlation functions (i.e., the Laplace transformations of multi-time joint probability densities). Hence the convergence of process is concluded in the sense of f\/inite-dimensional distributions~\cite{KT10}. Assume $\u \in \W^{(0, \pi r)}$ and let
\begin{gather*}
\g^{\CN}_{\u}(s, x; t, y)= \sum_{j=1}^N p^{[0,\pi r]}_{\rm aa}(s, x | u_j) \check{M}^{\CN}_{\u, u_j}(t, y),\\
\g^{\DN}_{\u}(s, x; t, y)= \sum_{j=1}^N p^{[0,\pi r]}_{\rm rr}(s, x | u_j)\check{M}^{\DN}_{\u, u_j}(t, y),\qquad (s,x),(t, y) \in [0,\infty) \times [0,\pi r].
\end{gather*}
It is easy to verify that
\begin{gather*}
\widetilde{\rE} \big[\check{f}^{\CN}_k\big(x+i \widetilde{B}(t)\big)\big]= e^{k^2 t/2 r^2} \check{f}^{\CN}_k(x),\\
\widetilde{\rE} \big[\check{f}^{\DN}_k\big(x+i \widetilde{B}(t)\big)\big] = e^{(k-1)^2 t/2 r^2} \check{f}^{\DN}_k(x), \qquad k \in \Z,
\end{gather*}
and thus
\begin{gather*}
\check{M}^{\CN}_{\u, u_j}(t, y)= \sum_{k=1}^N \check{\phi}^{\CN}_{\u, u_j}(k) e^{k^2 t/2r^2}\check{f}^{\CN}_k(y),\\
\check{M}^{\DN}_{\u, u_j}(t, y)= \sum_{k=1}^N \check{\phi}^{\DN}_{\u, u_j}(k) e^{(k-1)^2 t/2r^2}
\check{f}^{\DN}_k(y), \qquad j \in \{1,2, \dots, N\}.
\end{gather*}
Then we have
\begin{gather*}
\g^{\RN}_{\u}(s, x; t, y) = \g^{\RN}_{\rm eq}(t-s, x, y)+\r^{\RN}_{\u}(s,x;t,y),\qquad \RN=\CN, \DN
\end{gather*}
with
\begin{gather}
\g^{\CN}_{\rm eq}(t-s, x, y)= \frac{2}{\pi r} \sum_{\ell=1}^N e^{\ell^2(t-s)/2r^2}\check{f}^{\CN}_{\ell}(x) \check{f}^{\CN}_{\ell}(y)\nonumber\\
\hphantom{\g^{\CN}_{\rm eq}(t-s, x, y)}{} = \frac{1}{\pi r} \sum_{n\colon |n| \leq N}
e^{n^2(t-s)/2r^2} \sin \left( \frac{nx}{r} \right) \sin \left( \frac{ny}{r} \right),\nonumber\\
\r^{\CN}_{\u}(s,x;t,y) = \frac{2}{\pi r} \sum_{k=1}^N e^{k^2(t-s)/2r^2} \check{f}^{\CN}_k(y)
\sum_{\ell=N+1}^{\infty} e^{-(\ell^2-k^2) s/2r^2} \check{f}^{\CN}_{\ell}(x) \nonumber\\
\hphantom{\r^{\CN}_{\u}(s,x;t,y) =}{} \times \sum_{j=1}^N \check{f}^{\CN}_{\ell}(u_j) \check{\phi}^{\CN}_{\u, u_j}(k),\label{eqn:rC1}
\end{gather}
and
\begin{gather}
\g^{\DN}_{\rm eq}(t-s, x, y) = \frac{1}{\pi r} \left(\check{f}^{\DN}_1(x) \check{f}^{\DN}_1(y)
+ 2 \sum_{\ell=2}^N e^{(\ell-1)^2(t-s)/2r^2}\check{f}^{\DN}_{\ell}(x) \check{f}^{\DN}_{\ell}(y) \right)\nonumber\\
\hphantom{\g^{\DN}_{\rm eq}(t-s, x, y) }{} = \frac{1}{\pi r} \sum_{n: |n| \leq N-1}
e^{n^2(t-s)/2r^2} \cos \left( \frac{nx}{r} \right) \cos \left( \frac{ny}{r} \right),\nonumber\\
\r^{\DN}_{\u}(s,x;t,y)= \frac{2}{\pi r} \sum_{k=1}^N e^{(k-1)^2(t-s)/2r^2}\check{f}^{\DN}_k(y)\nonumber\\
\hphantom{\r^{\DN}_{\u}(s,x;t,y)=}{} \times
\sum_{\ell = N+1}^{\infty} e^{-\{(\ell-1)^2-(k-1)^2\} s/2r^2}
\check{f}^{\DN}_{\ell}(x) \sum_{j=1}^N \check{f}^{\DN}_{\ell}(u_j) \check{\phi}^{\DN}_{\u, u_j}(k),\label{eqn:rD1}
\end{gather}
where (\ref{eqn:relax1}) and (\ref{eqn:p_tri}) were used. For any f\/ixed $s, t \in [0, \infty)$, it is obvious that
\begin{gather*}
\lim_{T \to \infty} \r^{\RN}_{\u}(s+T,x; t+T, y)=0,\qquad \RN=\CN, \DN,
\end{gather*}
uniformly on any subset of $(x, y) \in (0, \pi r)^2$, since the summations of $k$ are taken for $1 \leq k \leq N$, while those of $\ell$ are taken for $\ell \geq N+1$ in (\ref{eqn:rC1}) and (\ref{eqn:rD1}). Hence
\begin{gather*}
 \lim_{T \to \infty} \check{\bK}^{\CN}_{\u}(s+T, x; t+T, y) = \g^{\CN}_{\rm eq}(t-s, x,y)-{\bf 1}(s>t) p^{[0, \pi r]}_{\rm aa}(s-t, x|y), \\
\lim_{T \to \infty} \check{\bK}^{\DN}_{\u}(s+T, x; t+T, y) = \g^{\DN}_{\rm eq}(t-s, x,y) -{\bf 1}(s>t) p^{[0, \pi r]}_{\rm rr}(s-t, x|y),
\end{gather*}
in the same sense. It is easy to conf\/irm that the limit kernels are represented by (\ref{eqn:K_eq_C}) and (\ref{eqn:K_eq_D}), if we use (\ref{eqn:p_tri}). The limit kernels (\ref{eqn:K_eq_C}) and (\ref{eqn:K_eq_D}) depend on time dif\/ference $t-s$, which implies that the determinantal processes def\/ined by them are temporally homogeneous. The determinantal processes with the spatio-temporal correlation kernels (\ref{eqn:K_eq_C}) and (\ref{eqn:K_eq_D}) are equilibrium processes, which are reversible with respect to the {\it determinantal point processes} with the spatial correlation kernels
\begin{gather}
\check{\rm K}_{\rm eq}^{\CN}(x,y) = \frac{1}{2 \pi r} \left[\frac{\sin\{(2N+1)(y-x)/2r\}}{\sin\{(y-x)/2r \}}- \frac{\sin\{(2N+1)(y+x)/2r\}}{\sin\{(y+x)/2r \}} \right],\nonumber\\
\check{\rm K}_{\rm eq}^{\DN}(x,y)= \frac{1}{2 \pi r} \left[\frac{\sin\{(2N-1)(y-x)/2r\}}{\sin\{(y-x)/2r \}}
+ \frac{\sin\{(2N-1)(y+x)/2r\}}{\sin\{(y+x)/2r \}} \right],\label{eqn:dpp}
\end{gather}
$(x, y) \in [0, \pi r]^2$, respectively. The convergence of processes is irreversible. Thus all statements of the present proposition have been proved.
\end{proof}

We note that
\begin{gather*}
\check{\rho}^{\CN}_{\rm eq}(x) = \lim_{y \to x} \check{\rm K}^{\CN}_{\rm eq}(x, y)=\frac{2}{\pi r} \sum_{n=1}^N \sin^2 \left(\frac{nx}{r} \right),\\
\check{\rho}^{\DN}_{\rm eq}(x)= \lim_{y \to x} \check{\rm K}^{\DN}_{\rm eq}(x, y)=\frac{1}{\pi r} \left\{ 1 + 2 \sum_{n=1}^{N-1} \cos^2 \left(\frac{nx}{r} \right) \right\},
\end{gather*}
and hence
\begin{gather*}
\int_0^{\pi r} \check{\rho}^{\CN}_{\rm eq}(x) dx = \int_0^{\pi r} \check{\rho}^{\DN}_{\rm eq}(x) dx =N,
\end{gather*}
as required.

\section{Concluding remarks and open problems} \label{sec:remarks}

In the previous paper \cite{Kat15,Kat16} and in this paper (Theorem~\ref{thm:main1}), we have introduced seven families of interacting particle systems
$\Xi^{\RN}(t)=\sum\limits_{j=1}^N \delta_{X^{\RN}_j(t)}$, $t \in [0, t_{\ast})$ governed by the probability laws~$\P^{\RN}_{\u}$ associated with the irreducible reduced af\/f\/ine root systems denoted by $\RN=\AN, \BN, \BNv, \CN$, $\CNv, \BCN, \DN$. When we proved that they are determinantal processes, we showed that without change of expectations of symmetric functions of $\big\{X^{\RN}_j(\cdot)\big\}_{j=1}^N$, $\P^{\RN}_{\u}$ can be replaced by the signed measures $\widehat{\P}^{\RN}_{\u}$. Def\/ine $\widehat{\cF}^{\RN}_t= \sigma\big( \widehat{\Xi}^{\RN}(s) \colon 0 \leq s \leq t\big)$, $t \in [0, t_{\ast})$. As the simplest corollary of this fact, we can conclude that at any time $0 < t < t_{\ast}$,
\begin{gather*}
\widehat{\E}^{\RN}_{\u} \big[ {\bf 1}\big( \widehat{\Xi}^{\RN}(t) \in \widehat{\cF}^{\RN}_t\big) \big] =1
\qquad \text{for} \quad \RN=\AN, \BN, \BNv, \CN, \CNv, \BCN, \DN.
\end{gather*}
This is nothing but a rather trivial statement such that the processes $\big(\widehat{\Xi}^{\RN}(t)\big)_{t \in [0, t_{\ast})}$ are well-normalized, but it provides nontrivial multiple-integral equalities including parameters $t \in [0, t_{\ast})$ and $\u=(u_1, \dots, u_N)$. For example, for $\RN=\DN$, we will have
\begin{gather}
\int_{[0, \pi r]^N} d \x \prod_{\ell=1}^N \frac{1}{2 \pi r}\left\{ \vartheta_3 \left( \frac{x_{\ell}-u_{\ell}}{2 \pi r}; \frac{i t}{2 \pi r^2} \right)
+ \vartheta_3 \left( \frac{x_{\ell}+u_{\ell}}{2 \pi r}; \frac{i t}{2 \pi r^2} \right) \right\}\label{eqn:integral_D}\\
{} \times \prod_{1 \leq j < k \leq N}\!\!
\frac{\vartheta_1((x_k-x_j)/2 \pi r; \tau^{\DN}(t))}{\vartheta_1((u_k-u_j)/2 \pi r; \tau^{\DN}(0))}
\frac{\vartheta_1((x_k+x_j)/2 \pi r; \tau^{\DN}(t))}{\vartheta_1((u_k+u_j)/2 \pi r; \tau^{\DN}(0))}
= \left( \frac{\eta(\tau^{\DN}(t))}{\eta(\tau^{\DN}(0))} \right)^{N(N-2)}\!\!,\nonumber
\end{gather}
where $\eta(\tau)$ is the Dedekind modular function (\ref{eqn:Dedekind1}) and $\tau^{\DN}(t)=i (N-1)(t_{\ast}-t)/\pi r^2$.

In the previous papers \cite{Kat15,Kat16} and in Theorem \ref{thm:main2} we have identif\/ied the systems of SDEs which are solved by the four families of determinantal processes, $\big(\big(\Xi^{\AN}(t)\big)_{t \in [0, t_{\ast})}, \P^{\AN}_{\u}\big)$, $\u \in \W_N^{(0, 2 \pi r)}$, and $\big(\big(\Xi^{\RN}(t)\big)_{t \in [0, t_{\ast})}, \P^{\RN}_{\u}\big)$, $\u \in \W_N^{(0, \pi r)}$ with $\RN=\BN, \CN$ and $\DN$. The systems of SDEs for other cases $\RN=\BNv, \CNv, \BCN$ are not yet clarif\/ied. The exceptional cases of reduced irreducible af\/f\/ine root systems and the non-reduced irreducible af\/f\/ine root systems~\cite{Mac72} will be studied from the view point of the present stochastic analysis.

The classical Dyson models of type A in $\R$ given by (\ref{eqn:SDE_A3}) and of types C and D in $[0, \infty)$ given by (\ref{eqn:SDE_C3}) and~(\ref{eqn:SDE_D3}) are realized as the eigenvalue processes of Hermitian-matrix-valued Brownian motions with specif\/ied symmetry \cite{Dys62,KT04}. It will be an interesting problem to construct the matrix-valued Brownian motions such that the eigenvalue processes of them provide the elliptic Dyson models; (\ref{eqn:SDE_A1}) with $\beta=2$ and (\ref{eqn:SDE_B1})--(\ref{eqn:SDE_D1}).

As shown in Section \ref{sec:proof2}, the factors represented by the Dedekind modular function (\ref{eqn:Dedekind1}) in the determinantal martingale-functions
(\ref{eqn:D_Ra})--(\ref{eqn:D_Rc}) are essential in the proof of Theorem \ref{thm:main2}. We have noted that $\cN^{\RN}$ given by (\ref{eqn:cN_R}) is identif\/ied with the quantity g given for the reduced irreducible af\/f\/ine root systems in \cite[Appendix~1]{Mac72}. Interpretation of the formulas~(\ref{eqn:c_0}) from the view point of representation theory is desired.

As mentioned after Theorem \ref{thm:main2} in Section~\ref{sec:Introduction}, if we take the temporally homogeneous limit $t_{\ast} \to \infty$, the elliptic Dyson models of types A, B, C, and D become the corresponding trigonometric Dyson models, and in the further limit $r \to \infty$, they are reduced to the Dyson models in~$\R$, in $[0, \infty)$ with an absorbing boundary condition at the origin, and in $[0, \infty)$ with a ref\/lecting boundary condition at the origin, respectively. In the limit $r \to \infty$, $p^{[0, \pi r]}_{\rm aa}$ and $p^{[0, \pi r]}_{\rm rr}$ given by~(\ref{eqn:p_abs}) and~(\ref{eqn:p_ref}) become
\begin{gather*}
p^{[0, \infty)}_{\rm aa}(t, y|x)= p_{\rm BM}(t, y|x)-p_{\rm BM}(t, y|-x),\\
p^{[0, \infty)}_{\rm rr}(t, y|x)= p_{\rm BM}(t, y|x)+p_{\rm BM}(t, y|-x),
\end{gather*}
and in the double limit $t_{\ast} \to \infty$, $r \to \infty$, $\Phi^{\CN}_{\u, u_j}(z)$ and $\Phi^{\DN}_{\u, u_j}(z)$ given by (\ref{eqn:Phi_Ra})--(\ref{eqn:Phi_Rc}) become
\begin{gather*}
\Phi^{\CN}_{\u, u_j}(z) \to \frac{z_j}{u_j} \Phi^{[0, \infty)}_{\u, u_j}(z), \qquad \Phi^{\DN}_{\u, u_j}(z) \to \Phi^{[0, \infty)}_{\u, u_j}(z)
\end{gather*}
with
\begin{gather*}
\Phi^{[0, \infty)}_{\u, u_j}(z)=\prod_{\substack{1 \leq \ell \leq N, \cr \ell \not=j}} \frac{z^2-u_{\ell}^2}{u_j^2-u_{\ell}^2}.
\end{gather*}
Then the spatio-temporal correlation kernels (\ref{eqn:K1}) are reduced to the following
\begin{gather*}
\bK^{\CN}_{\u}(s, x; t, y) = \sum_{j=1}^N p^{[0, \infty)}_{\rm aa}(s, x|u_j)\widetilde{\rE} \left[\frac{y+i \widetilde{B}(t)}{u_j}\Phi^{[0, \infty)}_{\u, u_j}\big(y+i \widetilde{B}(t)\big) \right]\\
\hphantom{\bK^{\CN}_{\u}(s, x; t, y)=}{}-{\bf 1}(s>t) p^{[0, \infty)}_{\rm aa}(s-t, x|y),\nonumber\\
\bK^{\DN}_{\u}(s, x; t, y)= \sum_{j=1}^N p^{[0, \infty)}_{\rm rr}(s, x|u_j)\widetilde{\rE} \big[\Phi^{[0, \infty)}_{\u, u_j}\big(y+i \widetilde{B}(t)\big) \big]-{\bf 1}(s>t) p^{[0, \infty)}_{\rm rr}(s-t, x|y),
\end{gather*}
$(s, x), (t, y) \in [0, \infty) \times [0, \infty)$. Consider the $D$-dimensional Bessel processes BES$^{(D)}$ in $[0, \infty)$, whose transition probability densities $p^{{\rm BES}^{(D)}}$ are expressed using the modif\/ied Bessel func\-tion~$I_{\nu}(z)$ with $\nu=(D-2)/2$ \cite{Kat16_Springer, RY99}. In particular, we f\/ind that
\begin{gather*}
p^{{\rm BES}^{(3)}}(t, y|x)=\frac{y}{x} p^{[0, \infty)}_{\rm aa}(t, y|x),\qquad p^{{\rm BES}^{(1)}}(t, y|x)= p^{[0, \infty)}_{\rm rr}(t, y|x),
\end{gather*}
$x, y \in (0, \infty)$, $t \geq 0$. If we put
\begin{gather}
\bK^{{\rm BES}^{(3)}}_{\u}(s, x; t, y)= \sum_{j=1}^N p^{{\rm BES}^{(3)}}(s, x|u_j)
\widetilde{\rE} \left[\frac{y+i \widetilde{B}(t)}{y} \Phi^{[0, \infty)}_{\u, u_j}\big(y+i \widetilde{B}(t)\big) \right]\nonumber\\
\hphantom{\bK^{{\rm BES}^{(3)}}_{\u}(s, x; t, y)=}{}
- {\bf 1}(s>t) p^{{\rm BES}^{(3)}}(s-t, x|y),\label{eqn:K_BES3}\\
\bK^{{\rm BES}^{(1)}}_{\u}(s, x; t, y)= \sum_{j=1}^N p^{{\rm BES}^{(1)}}(s, x|u_j)\widetilde{\rE} \big[\Phi^{[0, \infty)}_{\u, u_j}\big(y+i \widetilde{B}(t)\big) \big]\nonumber\\
\hphantom{\bK^{{\rm BES}^{(1)}}_{\u}(s, x; t, y)=}{} - {\bf 1}(s>t) p^{{\rm BES}^{(1)}}(s-t, x|y),\label{eqn:K_BES1}
\end{gather}
the following equalities are established
\begin{gather}
\bK^{\CN}_{\u}(s, x; t, y) = \frac{y}{x} \bK^{{\rm BES}^{(3)}}_{\u}(s, x; t, y), \qquad
\bK^{\DN}_{\u}(s, x; t, y)= \bK^{{\rm BES}^{(1)}}_{\u}(s, x; t, y),\label{eqn:BES}
\end{gather}
$(s, x), (t, y) \in [0, \infty) \times [0, \infty)$. The functions (\ref{eqn:K_BES3}) and (\ref{eqn:K_BES1}) are the spatio-temporal correlation kernels of the $N$-particle systems of BES$^{(3)}$ and BES$^{(1)}$ with noncolliding condition (see \cite[Section~7]{Kat14}). It is easy to verify that the spatio-temporal Fredholm determinant~(\ref{eqn:Fredholm}) is invariant under the transformation of kernel
\begin{gather*}
\widehat{\K}_{\u}(s, x; t, y) \to \frac{a(t, y)}{a(s, x)} \widehat{\K}_{\u}(s, x; t, y)
\end{gather*}
with an arbitrary continuous function~$a$. Then (\ref{eqn:BES}) implies that the elliptic Dyson models of types C and D given by Theorem \ref{thm:main2} are deduced to the noncolliding BES$^{(3)}$ and BES$^{(1)}$ in the double limit $t_{\ast} \to \infty$, $r \to \infty$, respectively. In other words,
the present elliptic Dyson models of types C and D can be regarded as the elliptic extensions of the noncolliding BES$^{(3)}$ and BES$^{(1)}$, respectively.
Trigonometric and elliptic extensions of noncolliding BES$^{(D)}$ with general $D \geq 1$ \cite{Kat14, KT11} will be studied. Moreover, inf\/inite-particle limits of the elliptic determinantal processes should be studied \cite{Kat15,Kat16, KT10}.

Connection between the present elliptic determinantal processes and probabilistic discrete models with elliptic weights \cite{Betea11,BGR10,Sch12,Sch07,SY17} will be also an interesting future problem.

\appendix

\section{The Jacobi theta functions and related functions}\label{sec:appendixA}

\subsection{Notations and formulas of the Jacobi theta functions}\label{sec:appendixA_1}

Let
\begin{gather*}
z=e^{v \pi i}, \qquad q=e^{\tau \pi i},
\end{gather*}
where $v, \tau \in \C$ and $\Im \tau > 0$. The Jacobi theta functions are def\/ined as follows \cite{NIST10, WW27}
\begin{gather}
\vartheta_0(v; \tau) =-i e^{\pi i (v+\tau/4)} \vartheta_1 \left( v + \frac{\tau}{2}; \tau \right)=
\sum_{n \in \Z} (-1)^n q^{n^2} z^{2n} =1+ 2 \sum_{n=1}^{\infty}(-1)^n e^{\tau \pi i n^2} \cos(2 n \pi v),\nonumber\\
\vartheta_1(v; \tau) = i \sum_{n \in \Z} (-1)^n q^{(n-1/2)^2} z^{2n-1}
=2 \sum_{n=1}^{\infty} (-1)^{n-1} e^{\tau \pi i (n-1/2)^2} \sin\{(2n-1) \pi v\},\nonumber\\
\vartheta_2(v; \tau)= \vartheta_1 \left( v+ \frac{1}{2}; \tau \right)=\sum_{n \in \Z} q^{(n-1/2)^2} z^{2n-1}
=2 \sum_{n=1}^{\infty} e^{\tau \pi i (n-1/2)^2} \cos \{(2n-1) \pi v\},\nonumber\\
\vartheta_3(v; \tau) = e^{\pi i (v+\tau/4)} \vartheta_1 \left( v+\frac{1+\tau}{2}; \tau \right)= \sum_{n \in \Z} q^{n^2} z^{2n}=1 + 2 \sum_{n=1}^{\infty} e^{\tau \pi i n^2} \cos (2 n \pi v).\label{eqn:theta}
\end{gather}
(Note that the present functions $\vartheta_{\mu}(v; \tau)$, $\mu=1,2,3$ are denoted by $\vartheta_{\mu}(\pi v,q)$, and $\vartheta_0(v;\tau)$ by $\vartheta_4(\pi v,q)$ in~\cite{WW27}.) For $\Im \tau >0$, $\vartheta_{\mu}(v; \tau)$, $\mu=0,1,2,3$ are holomorphic for $|v| < \infty$ and satisfy the partial dif\/ferential equation
\begin{gather}
\frac{\partial \vartheta_{\mu}(v; \tau)}{\partial \tau}=
\frac{1}{4 \pi i} \frac{\partial^2 \vartheta_{\mu}(v; \tau)}{\partial v^2}.\label{eqn:Jacobi_eq}
\end{gather}
They have the quasi-periodicity; for instance, $\vartheta_1$ satisf\/ies
\begin{gather*}
\vartheta_1(v+1; \tau)=-\vartheta_1(v; \tau), \qquad \vartheta_1(v+\tau; \tau)=-e^{-\pi i (2v+\tau)} \vartheta_1(v; \tau).
\end{gather*}
By the def\/inition (\ref{eqn:theta}), when $\Im \tau > 0$,
\begin{gather}
\vartheta_1(0; \tau)=\vartheta_1(1; \tau)=0, \qquad \vartheta_1(x; \tau) > 0, \qquad x \in (0,1),\nonumber\\
\vartheta_0(x; \tau) > 0, \qquad x \in \R. \label{eqn:theta_positive}
\end{gather}
We see the asymptotics
\begin{gather}
\vartheta_0(v; \tau) \sim 1, \qquad \vartheta_1(v; \tau) \sim 2 e^{\tau \pi i/4} \sin (\pi v), \qquad
\vartheta_2(v; \tau) \sim 2 e^{\tau \pi i/4} \cos(\pi v),\nonumber\\
\vartheta_3(v; \tau) \sim 1\qquad \text{in} \quad \Im \tau \to + \infty \qquad (\text{i.e.}, \ q=e^{\tau \pi i} \to 0).\label{eqn:theta_asym}
\end{gather}
The following functional equalities are known as Jacobi's imaginary transformations \cite{NIST10, WW27}
\begin{gather}
\vartheta_0(v; \tau)= e^{\pi i/4} \tau^{-1/2} e^{-\pi i v^2/\tau}\vartheta_2 \left( \frac{v}{\tau}; - \frac{1}{\tau} \right),\nonumber\\
\vartheta_1(v; \tau)= e^{3 \pi i/4} \tau^{-1/2} e^{-\pi i v^2/\tau}\vartheta_1 \left( \frac{v}{\tau}; - \frac{1}{\tau} \right),\nonumber\\
\vartheta_3(v; \tau)= e^{\pi i/4} \tau^{-1/2} e^{-\pi i v^2/\tau}\vartheta_3 \left( \frac{v}{\tau}; - \frac{1}{\tau} \right). \label{eqn:Jacobi_imaginary}
\end{gather}

\subsection[Basic properties of $A_{\cN}^{2 \pi r}(t_{\ast}-t,x)$]{Basic properties of $\boldsymbol{A_{\cN}^{2 \pi r}(t_{\ast}-t,x)}$}\label{sec:appendixA_2}

For $0< t_{\ast} < \infty$, $0< r < \infty$, $0< \cN < \infty$, the function $A_{\cN}^{2 \pi r}(t_{\ast}-t,x)$ is def\/ined by~(\ref{eqn:A1}), which is written as
\begin{gather}
A_{\cN}^{2 \pi r}(t_{\ast}-t,x) = \frac{1}{2 \pi r}\frac{\vartheta_1'\big(x/2 \pi r ; i \cN (t_{\ast}-t)/2 \pi r^2\big)}
{\vartheta_1\big(x/2 \pi r ; i \cN (t_{\ast}-t)/2 \pi r^2\big)},\qquad t \in [0, t_{\ast}), \label{eqn:A1b}
\end{gather}
where $\vartheta_1'(v; \tau)=\partial \vartheta_1(v; \tau)/\partial v$. As a function of $x \in \R$, $A_{\cN}^{2 \pi r}(t_{\ast}-t,x)$ is odd,
\begin{gather}
A_{\cN}^{2 \pi r}(t_{\ast}-t,-x)=-A_{\cN}^{2 \pi r}(t_{\ast}-t, x),\label{eqn:A_odd}
\end{gather}
and periodic with period $2 \pi r$
\begin{gather*}
A_{\cN}^{2 \pi r}(t_{\ast}-t, x +2 m \pi r)=A_{\cN}^{2 \pi r}(t_{\ast}-t, x),\qquad m \in \Z.
\end{gather*}
It has only simple poles at $x= 2 m \pi r$, $m \in \Z$, and simple zeroes at $x=(2m+1) \pi r$, $m \in \Z$. Independently of the values of $t \in [0, t_{\ast})$ and $0< \cN < \infty$, $A_{\cN}^{2 \pi r}(t_{\ast}-t, x)$ behaves as
\begin{gather}
A_{\cN}^{2 \pi r}(t_{\ast}-t, x) \sim
\begin{cases}
\dfrac{1}{x}, & \text{as $x \downarrow 0 $}, \vspace{1mm}\\
-\dfrac{1}{2 \pi r-x}, & \text{as $x \uparrow 2 \pi r$}.
\end{cases}\label{eqn:A_boundary}
\end{gather}
Using Jacobi's imaginary transformation (\ref{eqn:Jacobi_imaginary}), we can show by (\ref{eqn:theta_asym}) that
\begin{gather}
A^{2 \pi r}_{\cN}(t_{\ast}-t, x) \sim
\begin{cases}
-\dfrac{x-\pi r}{\cN (t_{\ast}-t)}, & \text{if $x>0$}, \vspace{1mm}\\
-\dfrac{x+\pi r}{\cN (t_{\ast}-t)}, & \text{if $x<0$},
\end{cases}
\qquad \text{as $t \uparrow t_{\ast}$}.\label{eqn:A_t_ast}
\end{gather}

\subsection{Dedekind modular function}\label{sec:appendixA_3}

The Dedekind modular function $\eta(\tau)$ is def\/ined by (\ref{eqn:Dedekind1}), that is,
\begin{gather*}
\eta(\tau)=e^{\tau \pi i/12}\prod_{n=1}^{\infty} (1-e^{2 n \tau \pi i}),\qquad \Im \tau > 0.
\end{gather*}
For $t \in [0, t_{\ast})$, $0< \cN < \infty$, def\/ine
\begin{gather}
\eta_{\cN}^1(t_{\ast}-t)= \frac{\pi^2}{\omega_1} \left. \left( \frac{1}{12} - 2 \sum_{n=1}^{\infty}\frac{n q^{2n}}{1-q^{2n}} \right)\right|_{\omega_1=\pi r, \, q=e^{-\cN(t_{\ast}-t)/2 r^2}}.\label{eqn:eta1_1}
\end{gather}
Then the following equality is established \cite{Kat15,Kat16}
\begin{gather}
\frac{d \log \eta(\tau^{\RN}(t))}{d t}= \cN^{\RN} \frac{\eta_{\cN^{\RN}}^1 (t_{\ast}-t)}{2 \pi r}.\label{eqn:Dedekind_eta}
\end{gather}

\subsection{Transition probability densities of Brownian motions in an interval} \label{sec:appendixA_4}

Using the def\/initions of the Jacobi theta functions (\ref{eqn:theta}) and Jacobi's imaginary transforma\-tions~(\ref{eqn:Jacobi_imaginary}), we can obtain the following expressions for the transition probability densities $p^{[0, \pi r]}_{\rm aa}(t, y|x)$ and $p^{[0, \pi r]}_{\rm rr}(t, y|x)$ def\/ined by (\ref{eqn:p_abs}) and (\ref{eqn:p_ref})
\begin{align}
p^{[0, \pi r]}_{\rm aa}(t, y|x)&= p_{\rm BM}(t, y|x) \vartheta_3 \left( \frac{i (y-x) r}{t}; \frac{2 \pi i r^2}{t} \right)
- p_{\rm BM}(t, y|-x) \vartheta_3 \left( \frac{i (y+x) r}{t}; \frac{2 \pi i r^2}{t} \right)
\nonumber\\
&= \frac{1}{2 \pi r}
\left\{ \vartheta_3 \left( \frac{y-x}{2 \pi r}; \frac{it}{2 \pi r^2} \right)
- \vartheta_3 \left( \frac{y+x}{2 \pi r}; \frac{it}{2 \pi r^2} \right) \right\}
\nonumber\\
&= \frac{1}{\pi r} \sum_{n \in \Z} e^{-n^2 t/2r^2}
\sin \left(\frac{ny}{r} \right) \sin \left( \frac{nx}{r} \right),
\label{eqn:p_abs_A}
\end{align}
and
\begin{align}
p^{[0, \pi r]}_{\rm rr}(t, y|x)
&= p_{\rm BM}(t, y|x)
\vartheta_3 \left( \frac{i (y-x) r}{t}; \frac{2 \pi i r^2}{t} \right)
+ p_{\rm BM}(t, y|-x) \vartheta_3 \left( \frac{i (y+x) r}{t}; \frac{2 \pi i r^2}{t} \right)
\nonumber\\
&= \frac{1}{2 \pi r}
\left\{ \vartheta_3 \left( \frac{y-x}{2 \pi r}; \frac{it}{2 \pi r^2} \right)
+ \vartheta_3 \left( \frac{y+x}{2 \pi r}; \frac{it}{2 \pi r^2} \right) \right\}
\nonumber\\
&= \frac{1}{\pi r} \sum_{n \in \Z} e^{-n^2 t/2r^2}
\cos \left(\frac{ny}{r} \right) \cos \left( \frac{nx}{r} \right),
\label{eqn:p_ref_A}
\end{align}
for $x, y \in [0, \pi r]$, $t \geq 0$. It should be noted that $\int_0^{\pi r} p^{[0, \pi r]}_{\rm rr}(t, y|u) dy \equiv 1$, $\forall\, t \geq 0$, $u \in [0, \pi r]$, while
\begin{gather*}
p^{\rm survival}_u(t) \equiv \int_0^{\pi r} p^{[0, \pi r]}_{\rm aa}(t, y|u) dy
\simeq \frac{4}{\pi} e^{-t/2r^2} \sin \left( \frac{u}{r} \right) \to 0 \qquad \text{as $t \to \infty$}, \qquad u \in (0, \pi r).
\end{gather*}
When both boundaries at 0 and $\pi r$ are absorbing, $p^{\rm survival}_u(t)$ gives the probability for a Brownian motion to survive up to time $t$ in the interval $(0, \pi r)$, when it starts from $u \in (0, \pi r)$. In other words, with probability $1-p^{\rm survival}_u(t)$, the Brownian motion was absorbed before time $t$ and has been f\/ixed at one of the boundaries.

\subsection[Weierstrass $\wp$ function and zeta function $\zeta$]{Weierstrass $\boldsymbol{\wp}$ function and zeta function $\boldsymbol{\zeta}$}\label{sec:appendixA_5}

The Weierstrass $\wp$ function and zeta function $\zeta$ are def\/ined by
\begin{gather}
\wp(z| 2 \omega_1, 2 \omega_3)= \frac{1}{z^2}+\sum_{(m,n) \in \Z^2 \setminus \{(0,0)\}}\left[ \frac{1}{(z-\Omega_{m,n})^2}-\frac{1}{{\Omega_{m,n}}^2} \right],
\nonumber\\
\zeta(z| 2 \omega_1, 2 \omega_3)= \frac{1}{z}+\sum_{(m,n) \in \Z^2 \setminus \{(0,0)\}}\left[ \frac{1}{z-\Omega_{m,n}}+\frac{1}{\Omega_{m,n}}
+\frac{z}{{\Omega_{m,n}}^2} \right],\label{eqn:wp_zeta}
\end{gather}
where $\omega_1$ and $\omega_3$ are fundamental periods with $\tau=\omega_3/\omega_1$ and $\Omega_{m,n}=2 m \omega_1 + 2 n \omega_3$. Put~(\ref{eqn:Weierstrass1}), then the following functional equation holds (see \cite[Section~20.41]{WW27} and \cite[Lemma~2.1]{Kat15})
\begin{gather}
( \zeta_{\cN^{\RN}}(t_{\ast}-s, z+u) - \zeta_{\cN^{\RN}}(t_{\ast}-s, z) -\zeta_{\cN^{\RN}}(t_{\ast}-s, u) )^2\nonumber\\
 \qquad{} =\wp_{\cN^{\RN}}(t_{\ast}-s, z+u)+\wp_{\cN^{\RN}}(t_{\ast}-s, z)+\wp_{\cN^{\RN}}(t_{\ast}-s, u).\label{eqn:zeta_wp}
\end{gather}

\subsection*{Acknowledgements}

The author would like to thank the anonymous referees whose comments considerably improved the presentation of the paper. A part of the present work was done during the participation of the author in the ESI workshop on ``Elliptic Hypergeometric Functions in Combinatorics, Integrable Systems and Physics'' (March 20--24, 2017). The present author expresses his gratitude for the hospitality of Erwin Schr\"odinger International Institute for Mathematics and Physics (ESI) of the University of Vienna and for well-organization of the workshop by Christian Krattenthaler, Masatoshi Noumi, Simon Ruijsenaars, Michael~J.~Schlosser, Vyacheslav~P.~Spiridonov, and S.~Ole Warnaar. He also thanks Soichi Okada, Masatoshi Noumi, Simon Ruijsenaars, and Michael~J.~Schlosser for useful discussion. This work was supported in part by the Grant-in-Aid for Scientif\/ic Research~(C) (No.~26400405), (B) (No.~26287019), and (S) (No.~16H06338) of Japan Society for the Promotion of Science.

\pdfbookmark[1]{References}{ref}
\LastPageEnding

\end{document}